\def\cemetery{\Delta}
\def\QQ{Z}
\def\KK{Q}
\def\Kbar{\overbar{Q}}
\begin{document}


\title{Mixed-mode oscillations and interspike interval statistics in the
stochastic FitzHugh--Nagumo model}
\author{Nils Berglund\thanks{MAPMO,
CNRS -- UMR 6628, Universit\'{e} d'Orl\'{e}ans, F\'{e}d\'{e}ration Denis
Poisson -- FR 2964, B.P. 6759, 45067 Orl\'{e}ans Cedex 2, France.} 
\thanks{Supported by ANR project MANDy, Mathematical Analysis of Neuronal
Dynamics, ANR-09-BLAN-0008-01.}
~and Damien Landon$^{*\dagger}$}
\date{}   

\maketitle

\begin{abstract}
We study the stochastic FitzHugh--Nagumo equations, modelling the
dynamics of neuronal action potentials, in parameter regimes characterised by
mixed-mode oscillations. The interspike time interval is related to the random
number of small-amplitude oscillations separating consecutive spikes. We prove
that this number has an asymptotically geometric distribution, whose parameter
is related to the principal eigenvalue of a substochastic Markov chain. We
provide rigorous bounds on this eigenvalue in the small-noise regime, and derive
an approximation of its dependence on the system's parameters for a large range
of noise intensities. This yields a precise description of the probability
distribution of observed mixed-mode patterns and interspike intervals. 
\end{abstract}

\noindent
{\it Date.\/}
May 6, 2011. Revised version, April 5, 2012. 

\noindent 
{\it Mathematical Subject Classification.\/} 
60H10,   
34C26 (primary)   
60J20,   
92C20 (secondary)  

\noindent 
{\it Keywords and phrases.\/} 
FitzHugh--Nagumo equations,
interspike interval distribution,
mixed-mode oscillation, 
singular perturbation, 
fast--slow system, 
dynamic bifurcation, 
canard, 
substochastic Markov chain,
principal eigenvalue,
quasi-stationary distribution.


\section{Introduction}
\label{sec_intro}

Deterministic conduction-based models for action-potential generation in neuron
axons have been much studied for over half a century. In particular, the
four-dimensional Hodgkin--Huxley equations~\cite{HodgkinHuxley52} have been
extremely successful in reproducing the observed behaviour. Of particular
interest is the so-called excitable regime, when the neuron is at rest, but
reacts sensitively and reliably to small external perturbations, by emitting a
so-called spike. Much research efforts have been concerned with
the effect of deterministic perturbations, though the inclusion of random
perturbations in the form of Gaussian noise goes back at least
to~\cite{GersteinMandelbrot64}. A detailed account of different
models for stochastic perturbations and their effect on single
neurons can be found in~\cite{Tuckwell}. Characterising the influence of noise
on the spiking behaviour amounts to solving a stochastic first-exit
problem~\cite{Tuckwell75}.
Such problems are relatively well understood in dimension one, in particular
for the Ornstein--Uhlenbeck
process~\cite{CapocelliRicciardi71,Tuckwell77,RicciardiSacerdote80}. 
In higher dimensions, however, the situation is much more involved, and
complicated patterns of spikes can appear. See for
instance~\cite{TanabePakdaman2001,TakahataTanabePakdaman2002,Rowat_2007} for
numerical studies of the effect of noise on the interspike interval
distribution in the Hodgkin--Huxley equations. 

Being four-dimensional, the Hodgkin--Huxley equations are notoriously difficult
to study already in the deterministic case. For this reason, several simplified
models have been introduced. In particular, the two-dimensional FitzHugh--Nagumo
equations~\cite{Fitzhugh,Fitzhugh61,Nagumo62}, which generalise the Van der Pol
equations, are able to reproduce one type of excitability, which is associated
with a Hopf bifurcation (excitability of type II~\cite{Izhikevich00}). 

The effect of noise on the FitzHugh--Nagumo equations or similar excitable
systems has been studied
numerically~\cite{Longtin,KosmidisPakdaman,KosmidisPakdaman2006,Turcotte_2008,
Borowski_Kuske_etal_2011} and using approximations based on the Fokker--Planck
equations~\cite{Lindner_Schimansky_1999,SimpsonKuske_2011}, moment
methods~\cite{TanabePakdaman_PRE2001,Tuckwell_etal_2003}, and the Kramers
rate~\cite{Longtin2000}. Rigorous results on the oscillatory (as opposed to
excitable) regime have been obtained using the theory of large
deviations~\cite{MuratovVanden-EijndenE,DossThieullen2009} and by a detailed
description of sample paths near so-called canard solutions~\cite{Sowers08}. 

An interesting connection between excitability and mixed-mode oscillations
(MMOs) was observed by Kosmidis and
Pakdaman~\cite{KosmidisPakdaman,KosmidisPakdaman2006}, and further analysed by
Muratov and Vanden-Eijnden~\cite{MuratovVandeneijnden2007}. MMOs are patterns of
alternating large-
and small-amplitude oscillations (SAOs), which occur in a variety of chemical
and biological systems 
\cite{DegnOlsenPerram,HudsonHartMarinko,PetrovScottShowalter,Dicksonetal1}. In
the deterministic case, at least three variables are
necessary to reproduce such a behaviour (see~\cite{KuehnMMO} for a recent review
of deterministic mechanisms responsible for MMOs). As observed 
in~\cite{KosmidisPakdaman,KosmidisPakdaman2006,MuratovVandeneijnden2007}, 
in the presence of noise, already the two-dimensional
FitzHugh--Nagumo equations can display MMOs. In fact, depending on the three
parameters noise intensity $\sigma$, timescale separation $\eps$ and distance to
the Hopf bifurcation $\delta$, a large variety of behaviours can be observed,
including sporadic single spikes, clusters of spikes, bursting relaxation
oscillations and coherence resonance. \figref{fig_bif_diagram} shows a
simplified version of the phase diagram proposed
in~\cite{MuratovVandeneijnden2007}. 

\begin{figure}
\centerline{\includegraphics*[clip=true,width=100mm]
{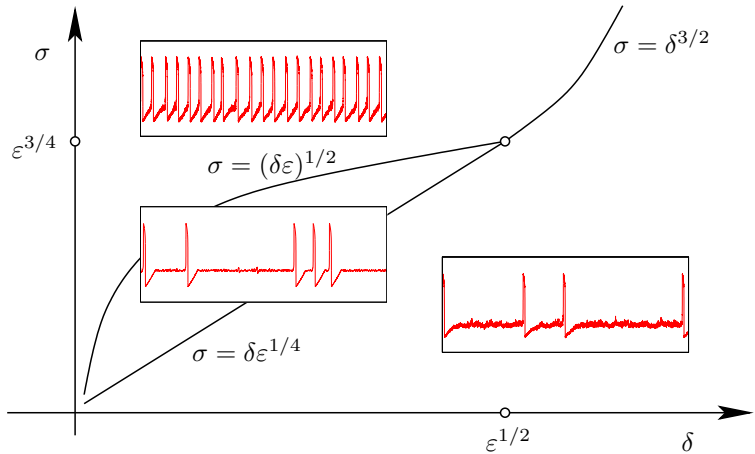}
}
 \figtext{
 	\writefig	11.3	0.3	$\delta$
 	\writefig	8.7	0.3	$\eps^{1/2}$
 	\writefig	2.4	4.2	$\eps^{3/4}$
 	\writefig	2.7	5.5	$\sigma$
 	\writefig	5.0	4.0	$\sigma=(\delta\eps)^{1/2}$
 	\writefig	4.8	1.5	$\sigma=\delta\eps^{1/4}$
 	\writefig	10.4	5.6	$\sigma=\delta^{3/2}$
 } 
\vspace{2mm}
\caption[]{Schematic phase diagram of the stochastic FitzHugh--Nagumo
equations. The parameter $\sigma$ measures the noise intensity, $\delta$
measures the distance to the singular Hopf bifurcation, and $\eps$ is the
timescale separation. The three main regimes are characterised be rare isolated
spikes, clusters of spikes, and repeated spikes.}
\label{fig_bif_diagram}
\end{figure}

In the present work, we build on ideas of~\cite{MuratovVandeneijnden2007} to
study in more detail the transition from rare individual spikes, through
clusters of spikes and all the way to bursting relaxation oscillations. We begin
by giving a precise mathematical definition of a random variable $N$ counting
the number of SAOs between successive spikes. It is related to a substochastic
continuous-space Markov chain, keeping track of the amplitude of each SAO. We
use this Markov process to prove that the distribution of $N$ is asymptotically
geometric, with a parameter directly related to the principal eigenvalue of the
Markov chain (Theorem~\ref{thm_geometric}). A similar behaviour has been
obtained for the length of bursting relaxation oscillations in a
three-dimensional system~\cite{HiczenkoMedvedev2009}. In the weak-noise regime,
we derive rigorous bounds on the principal eigenvalue and on the expected number
of SAOs (Theorem~\ref{thm_weak}). Finally, we derive an approximate expression
for the distribution of $N$ for all noise intensities up to the regime of
repeated spiking (Proposition~\ref{prop_trans}). 

The remainder of this paper is organised as follows. Section~\ref{sec_results}
contains the precise definition of the model. In Section~\ref{sec_N}, we define
the random variable $N$ and derive its general properties.
Section~\ref{sec_weak} discusses the weak-noise regime, and
Section~\ref{sec_trans} the transition from weak to strong noise. We present
some numerical simulations in Section~\ref{sec_sim}, and give concluding
remarks in Section~\ref{sec_conc}. A number of more technical computations are
contained in the appendix.

\subsubsection*{Acknowledgements}
It's a pleasure to thank Barbara Gentz, Simona Mancini and Khashayar Pakdaman
for numerous inspiring discussions, Athanasios Batakis for advice on
harmonic measures, and Christian Kuehn for sharing his deep knowledge on
mixed-mode oscillations. We also thank the two anonymous referees for providing
constructive remarks which helped to improve the manuscript. 
NB was partly supported by the International Graduate
College \lq\lq Stochastics and real world models\rq\rq\ at University of
Bielefeld. NB and DL thank the CRC 701 at University of Bielefeld for
hospitality.


\section{Model}
\label{sec_results}


We will consider random perturbations of the deterministic FitzHugh--Nagumo
equations given by 
\begin{equation}
 \label{res01}
\begin{split}
\eps \dot{x} &= x - x^3 + y \\
\dot{y} &= a - b x - c y\;,
\end{split} 
\end{equation} 
where $a, b, c\in\R$ and $\eps>0$ is a small parameter. The smallness of $\eps$
implies that $x$ changes rapidly, unless the state $(x,y)$ is close to the
nullcline $\set{y=x^3-x}$. Thus System~\eqref{res01} is called a fast-slow
system, $x$ being the fast variable and $y$ the slow one. 

We will assume that $b\neq0$. Scaling time by a factor $b$ and redefining the
constants $a$, $c$ and $\eps$, we can and will replace $b$ by $1$
in~\eqref{res01}. If $c\geqs0$ and $c$ is not too large, the nullclines
$\set{y=x^3-x}$ and $\set{a=x+cy}$ intersect in a unique stationary point $P$.
If $c<0$, the nullclines intersect in $3$ aligned points, and we let $P$ be the
point in the middle.  It can be written $P=(\alpha,\alpha^3-\alpha)$, where
$\alpha$ satisfies the relation
\begin{equation}
 \label{res01a}
\alpha + c(\alpha^3-\alpha) = a\;. 
\end{equation} 
The Jacobian matrix of the vector field at $P$ is given by 
\begin{equation}
 \label{res01b} 
J = 
\begin{pmatrix}
\dfrac{1-3\alpha^2}{\eps} & \dfrac{1}{\eps} \\
\vrule height 18pt depth 6pt width 0pt
-1 & -c
\end{pmatrix}\;.
\end{equation} 
It has determinant $(1-c(1-3\alpha^2))/\eps$
and trace 
\begin{equation}
 \label{res01c} 
\Tr J = \frac{3(\alpha_*^2-\alpha^2)}{\eps}\;, 
\qquad\text{where } 
\alpha_* = \sqrt{\frac{1-c\eps}{3}}\;.
\end{equation} 
Thus if
$\abs{c}<1/\sqrt{\eps}$, $J$ admits a pair of conjugate
imaginary eigenvalues when $\alpha=\pm\alpha_*$. 
Furthermore, the eigenvalues' real parts are of order $(\alpha_*-\alpha)/\eps$
near $\alpha_*$. The system undergoes so-called
singular Hopf bifurcations~\cite{BaerErneuxI,BaerErneuxII,Braaksma}  at
$\alpha=\pm\alpha_*$.

\begin{figure}
\centerline{\includegraphics*[clip=true,width=100mm]
{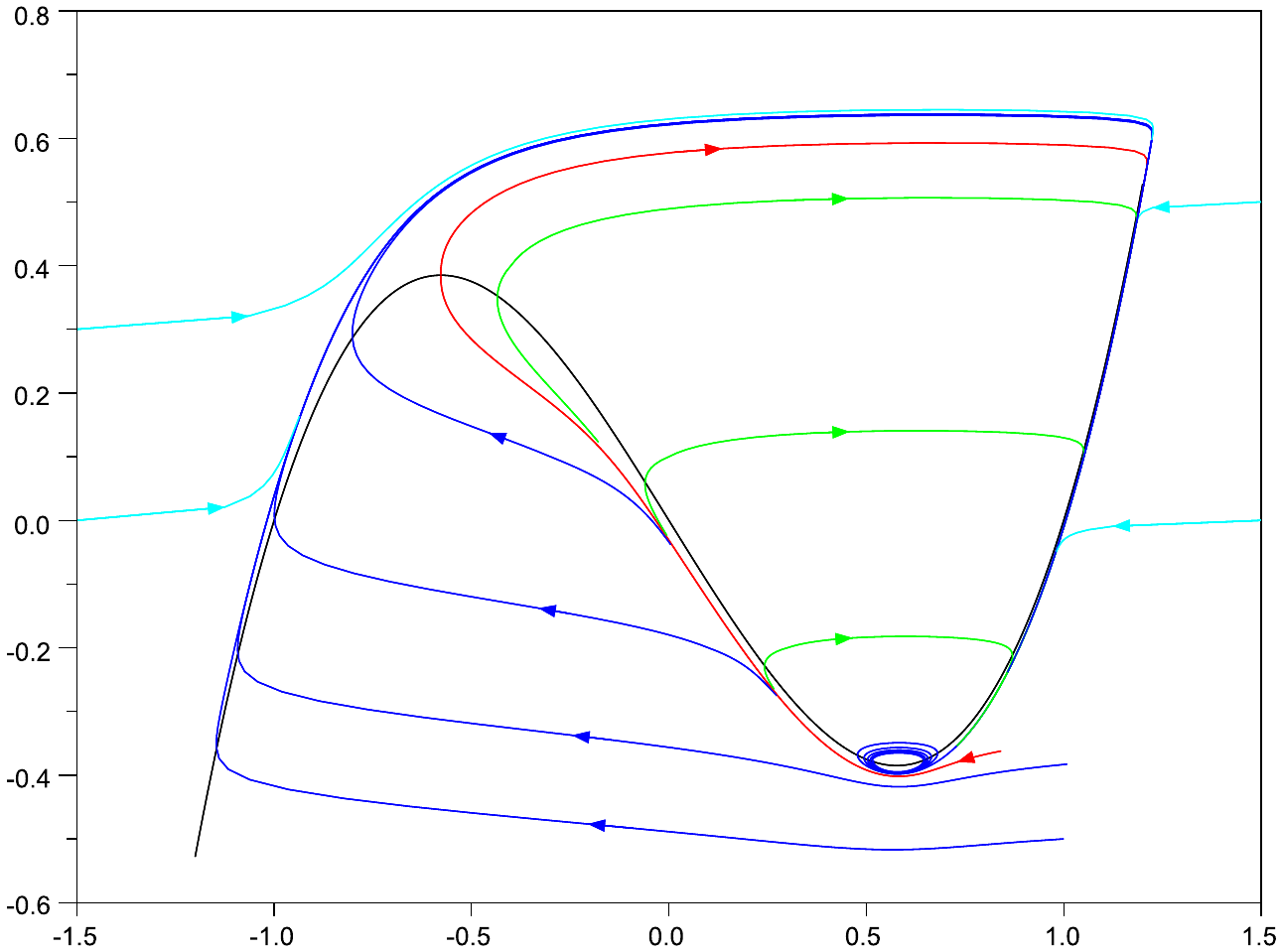}
}
 \figtext{ 
 	\writefig	12.7	0.6	$x$
 	\writefig	2.0	7.5	$y$
 }
\caption[]{Some orbits of the deterministic FitzHugh--Nagumo
equations~\eqref{res01} for parameter values $\eps=0.05$, $a=0.58$, $b=1$ and
$c=0$. The black curve is the nullcline, and the red orbit is the separatrix.
}
\label{fig_FHN_det}
\end{figure}

We are interested in the excitable regime, when $\alpha-\alpha_*$ is small and
positive. In this situation, $P$ is a stable stationary point, corresponding to
a quiescent neuron. However, a small perturbation of the initial condition,
e.g.\ a slight decrease of the $y$-coordinate, causes the system to make a large
excursion to the region of negative $x$, before returning to $P$
(\figref{fig_FHN_det}). This
behaviour corresponds to a spike in the neuron's membrane potential, followed
by a return to the quiescent state. One can check from the expression of the
Jacobian matrix that $P$ is a focus for $\alpha-\alpha_*$  of order
$\sqrt{\eps}$. Then return to rest involves small-amplitude oscillations (SAOs),
of exponentially decaying amplitude.

For later use, let us fix a particular orbit delimiting the spiking and
quiescent regimes, called \defwd{separatrix}. An arbitrary but convenient choice
for the separatrix is the negative-time orbit of the local maximum
$(-1/\sqrt{3},2/(3\sqrt{3}))$ of the nullcline (\figref{fig_FHN_det}). The main
results will not depend on the detailed choice of the separatrix. 

In this work we consider random perturbations of the deterministic
system~\eqref{res01} by Gaussian white noise. They are described by the system
of It\^o stochastic differential equations (SDEs) 
\begin{equation}
 \label{res02}
\begin{split}
\6 x_t &= \frac{1}{\eps} (x_t-x_t^3+y_t) \6t 
+ \frac{\sigma_1}{\sqrt{\eps}} \6W_t^{(1)} \\
\6 y_t &= (a - x_t - cy_t) \6t 
+ \sigma_2 \6W_t^{(2)}\;,
\end{split}
\end{equation} 
where $W_t^{(1)}$ and $W_t^{(2)}$ are independent, standard Wiener processes,
and $\sigma_1,\sigma_2 >0$. The parameter $a$ will be our bifurcation
parameter, while $c$ is assumed to be fixed, and small enough for the system to
operate in the excitable regime. The scaling in $1/\sqrt{\eps}$ of the noise
intensity in the first equation is chosen because the variance of the noise
term then grows like $\sigma_1^2 t/\eps$, so that $\sigma_1^2$ measures the
ratio of diffusion and drift for the $x$-variable, while $\sigma_2^2$ plays the
same r\^ole for the $y$-variable. 

\figref{fig_timeseries} shows a selection of time series for the stochastic
FitzHugh--Nagumo equations~\eqref{res02}. For the chosen parameter values, one
can clearly see large-amplitude spikes, separated by a random number of SAOs. 
Note the rather large variability of the SAOs' amplitude.  

\begin{figure}
\centerline{\includegraphics*[clip=true,width=70mm]
{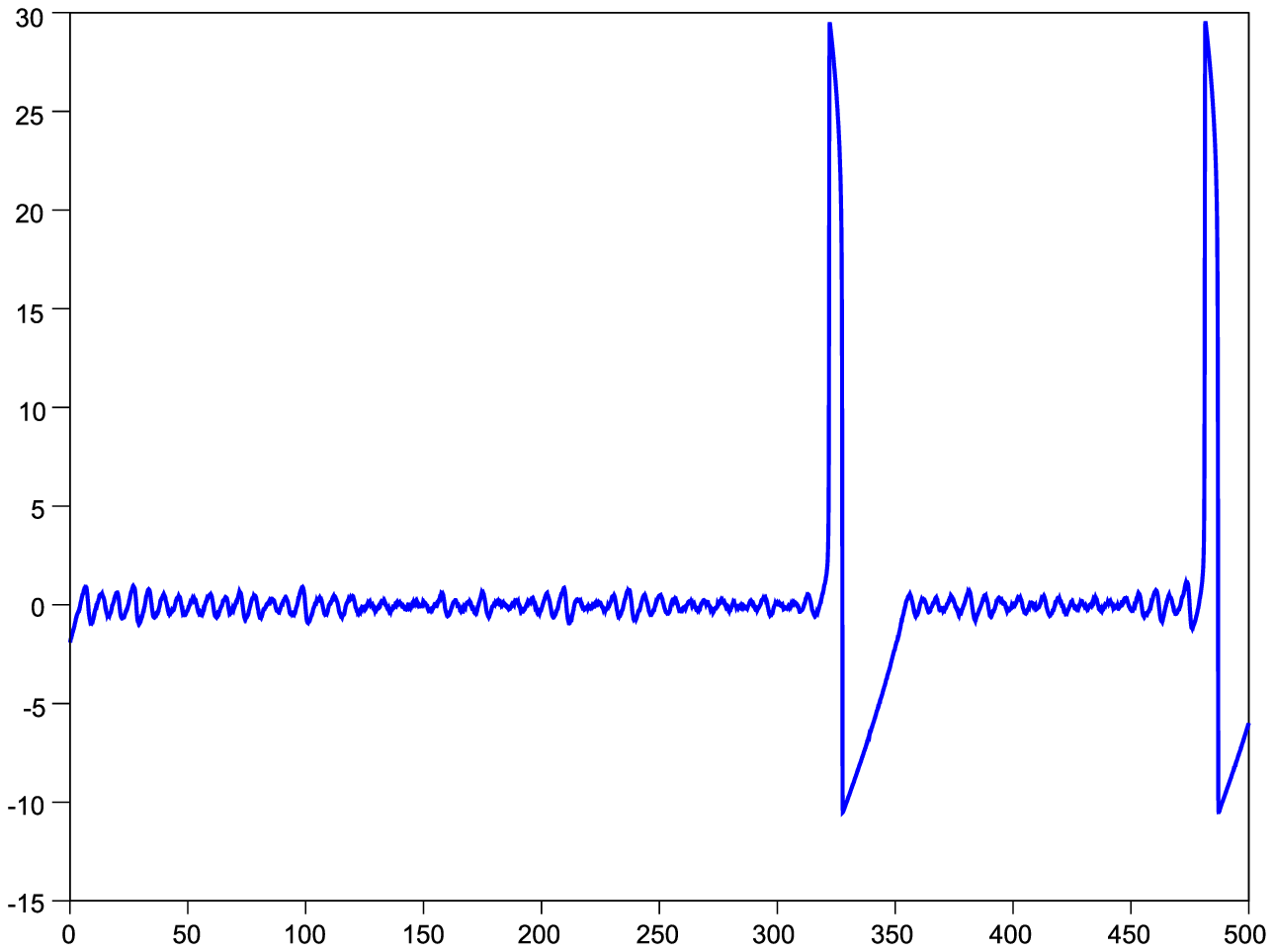}
\includegraphics*[clip=true,width=70mm]{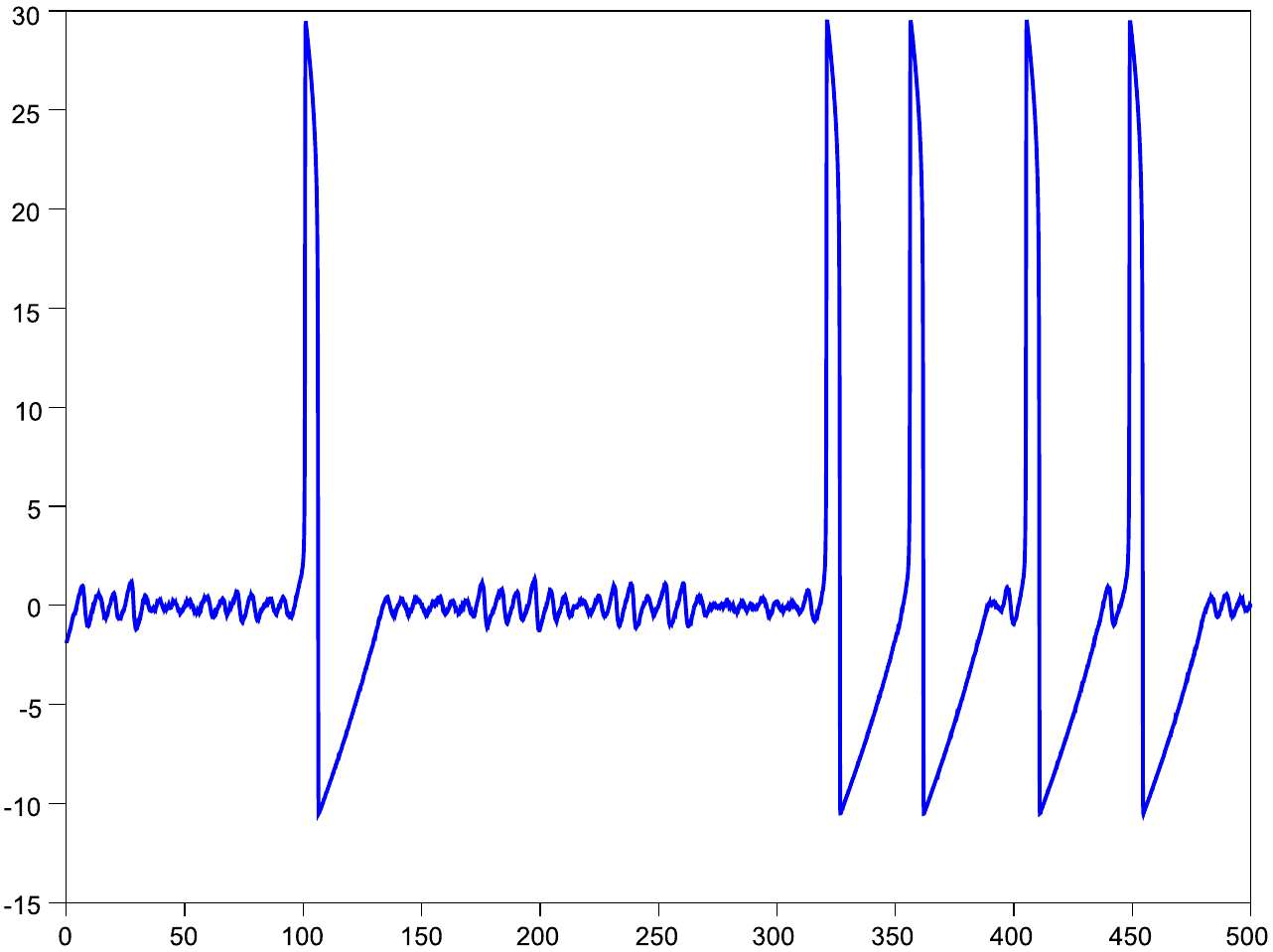}
}
\vspace{2mm}
\centerline{\includegraphics*[clip=true,width=70mm]
{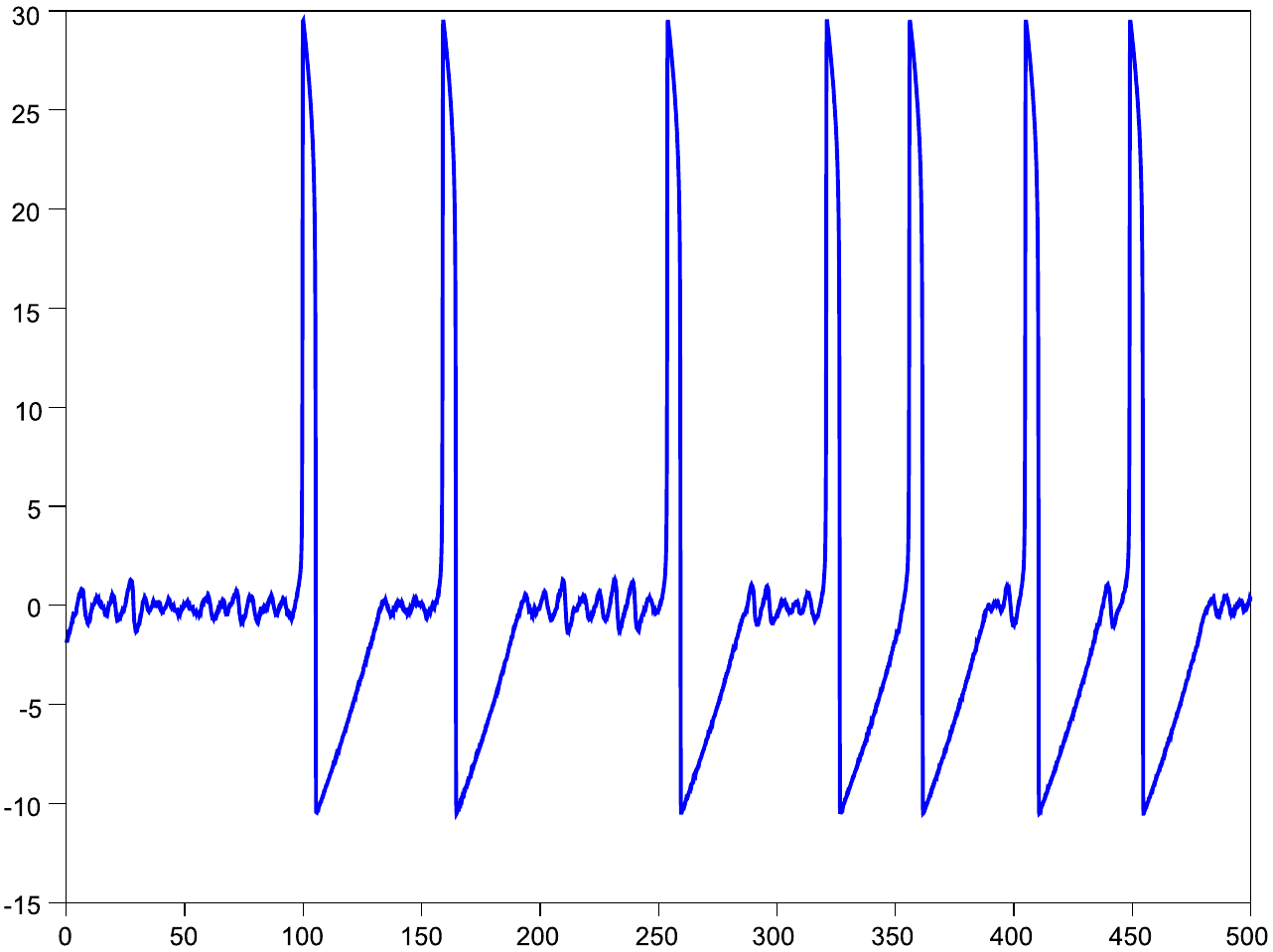}
\includegraphics*[clip=true,width=70mm]{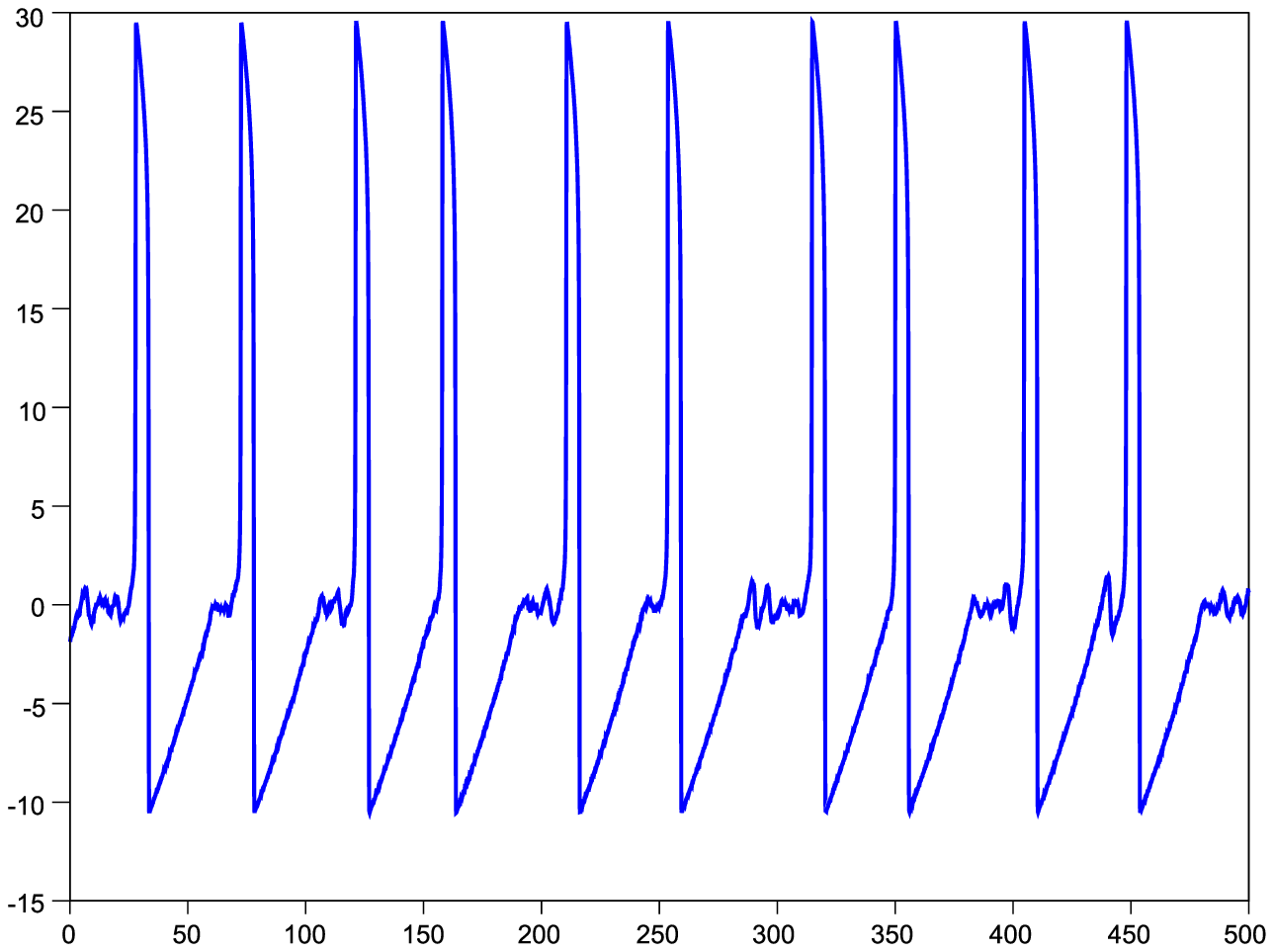}
}
 \figtext{ 
 	\writefig	6.9	0.9	$t$
 	\writefig	0.7	5.3	$\xi$
 	\writefig	14.0	0.9	$t$
 	\writefig	7.8	5.3	$\xi$
 	\writefig	6.9	6.35	$t$
 	\writefig	0.7	10.75	$\xi$
 	\writefig	14.0	6.35	$t$
 	\writefig	7.8	10.75	$\xi$
 }
 \vspace{2mm}
\caption[]{Examples of time series of the stochastic FitzHugh--Nagumo
equations~\eqref{res02}. The plots show the functions $t\mapsto\xi_t$, where
the variable $\xi$ is defined in Section~\ref{sec_weak}.
Parameter values are $\eps=0.01$ and $\delta=3\cdot 10^{-3}$ for the top row,
$\delta=5\cdot 10^{-3}$ for the bottom row. The noise intensities are 
given by $\sigma_1=\sigma_2=1.46\cdot 10^{-4}$, $1.82\cdot 10^{-4}$, 
$2.73\cdot 10^{-4}$ and $3.65\cdot 10^{-4}$. 
}
\label{fig_timeseries}
\end{figure}


\section{The distribution of small-amplitude oscillations}
\label{sec_N} 


In this section we define and analyse general properties of an integer-valued
random variable $N$, counting the number of
small-amplitude oscillations the stochastic system performs between two
consecutive spikes. The definition is going to be topological, making our
results robust to changes in details of the definition. We start by fixing a
bounded set $\cD\subset\R^2$, with smooth boundary $\partial\cD$, containing the
stationary point $P$ and a piece of the separatrix (\figref{fig_definition_N}).
Any excursion of the sample path $(x_t,y_t)_t$ outside $\cD$ will be considered

\begin{figure}
\centerline{\includegraphics*[clip=true,width=130mm]{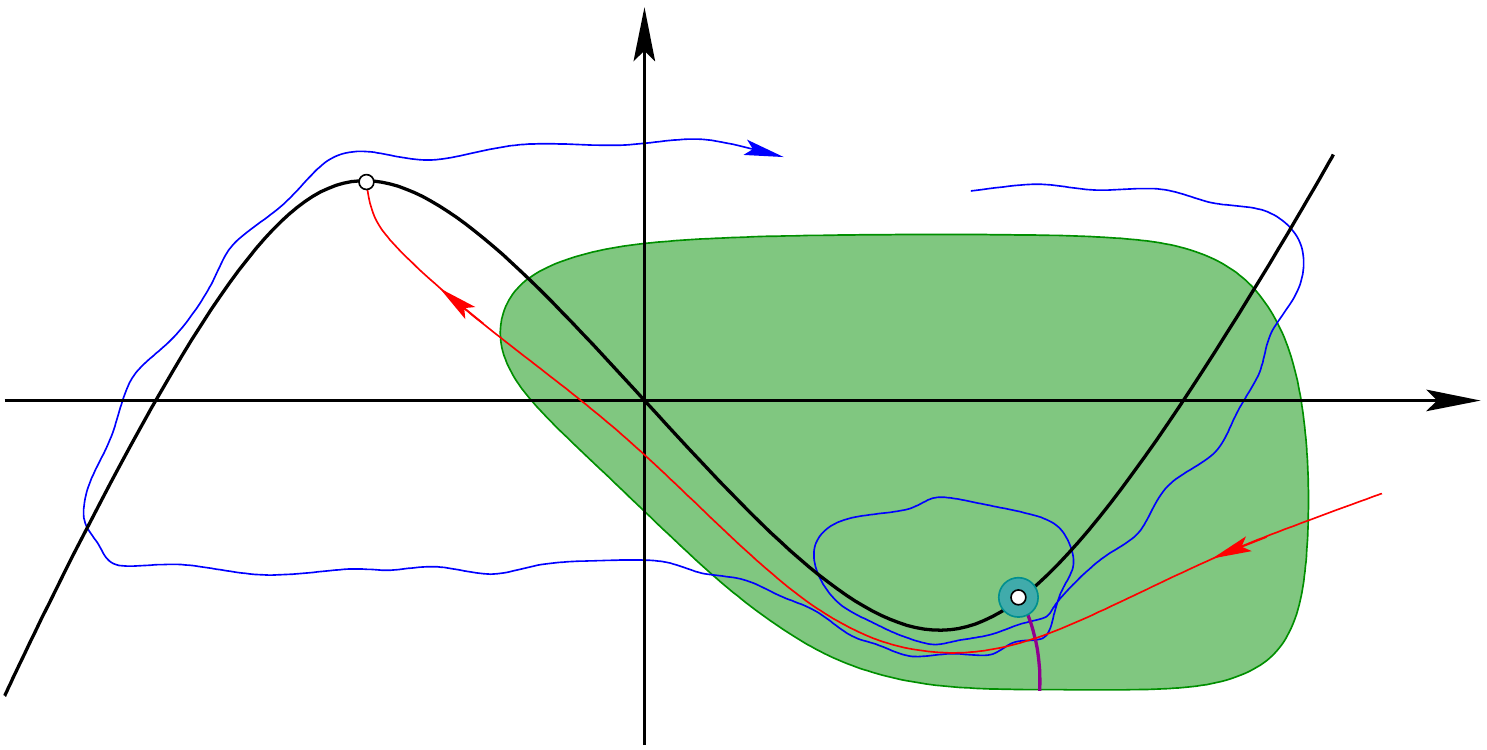}}
 \figtext{ 
	\writefig	12.9	3.7	$x$
	\writefig	6.2	6.1	$y$
	\writefig	7.5	4.4	$\cD$
	\writefig	9.9	0.6	$\cF$
	\writefig	11.9	0.9	$\partial\cD$
	\writefig	9.45	1.75	$P$
	\writefig	9.7	2.05	$\cB$
	\writefig	12.7	5.5	$y=x^3-x$
	\writefig	12.7	5.9	{nullcline}
	\writefig	13.1	2.7	{separatrix}
 }
 \vspace{2mm}
\caption[]{Definition of the number $N$ of SAOs. The sample path (blue) enters
the region $\cD$, and intersects twice the line $\cF$ before leaving $\cD$,
making another spike. Thus $N=2$ in this example. The separatrix is
represented in red.  
}
\label{fig_definition_N}
\end{figure}

To define $N$ precisely, we let $\cB$ be a small ball of radius $\rho>0$
centred in $P$. Then we draw a smooth curve $\cF$ from $\cB$ to
the boundary $\partial\cD$, which we parametrise by a variable
$r\in[0,1]$ proportional to arclength (the results will be independent,
however, of the choice of $\cF$ and of $r$). We extend the
parametrisation of $\cF$ to a polar-like parametrisation of all
$\cD\setminus\cB$, i.e.\ we choose a diffeomorphism $T:[0,1]\times\fS^1\to\cD$,
$(r,\varphi)\mapsto(x,y)$, where $T^{-1}(\cF)=\set{\varphi=0}$,
$T^{-1}(\partial\cD)=\set{r=0}$ and $T^{-1}(\partial\cB)=\set{r=1}$. We also
arrange that $\dot{\varphi}>0$ near $P$ for the deterministic flow. 

Consider the process $(r_t,\varphi_t)_t$ (where the angle $\varphi$ has been
lifted from $\fS^1$ to $\R$). Given an initial condition $(r_0,0)\in
T^{-1}(\cF)$ and an integer $M\geqs1$, we define the stopping time 
\begin{equation}
 \label{res03}
\tau = \inf\bigsetsuch{t>0}{\varphi_t\in\set{2\pi,-2M\pi} \text{ or } r_t \in
\set{0,1}}\;. 
\end{equation} 
There are four cases to consider:
\begin{itemiz}
\item	The case $r_\tau=0$ corresponds to the sample path $(x_t,y_t)$ leaving
$\cD$, and thus to a spike. This happens with strictly positive probability, by
ellipticity of the diffusion process~\eqref{res02}. In this situation, we set by
convention $N=1$. 
\item	In the case $\varphi_\tau=2\pi$ and  
$r_\tau\in(0,1)$, the sample path has returned to $\cF$ after
performing a complete revolution around $P$, staying all the while in
$\cD\setminus\cB$.
This corresponds to an SAO, and thus $N\geqs 2$. 
\item	The case $r_\tau=1$ corresponds to the sample path entering $\cB$,
which we consider as the neuron reaching the quiescent state. In that case
we simply wait until the state leaves $\cB$ again and
either hits $\cF$ or leaves $\cD$. 
\item	The case $\varphi_\tau=-2M\pi$ and $r_\tau\in(0,1)$ represents the
(unlikely) event that the sample path winds $M$ time around $P$ in the wrong
direction. We introduce this case for technical reasons only, as we will need
$\tau$ to be the first-exit time of a bounded set. For simplicity, we also
consider this situation as one SAO. 
\end{itemiz}
As long as $r_\tau\in(0,1)$, we repeat the above procedure, incrementing
$N$ at each iteration. This yields a sequence $(R_0,R_1,\dots,R_{N-1})$ of
random variables, describing the position of the successive intersections of the
path with $\cF$, separated by rotations around $P$, and up to the first exit
from $\cD$. 

\begin{remark}
\label{rem_defN} 
The above definition of $N$ is the simplest one to analyse mathematically. 
There are several possible alternatives. One can, for instance, introduce a 
quiescent state $(x,y)\in\cB$, and define $N$ as the number of SAOs until the
path either leaves $\cD$ or enters $\cB$. This would allow to keep track of the
number of SAOs between successive spikes and/or quiescent phases. 
Another possibility would be to count rotations in both the positive and
negative directions. For simplicity, we stick here to the above simplest
definition of $N$, but we plan to make a more refined study in a future
work.
\end{remark}

The sequence $(R_n)_n$ forms a substochastic Markov chain on $E=(0,1)$, with
kernel 
\begin{multline}
 \label{res04}
K(R,A) = \bigpcond{\varphi_\tau=\in\set{2\pi,-2M\pi},r_\tau\in
A}{\varphi_0=0,r_0=R}\;, 
\\ 
R\in E, A\subset E \text{ a Borel set\;.}
\end{multline} 
The Markov chain is substochastic because $K(R,E)<1$, due to the positive
probability of sample paths leaving $\cD$. We can make it
stochastic in the usual way by adding a cemetery state $\cemetery$ to $E$ (the
spike), and setting $K(R,\cemetery)=1-K(R,E)$, $K(\cemetery,\cemetery)=1$
(see~\cite{OreyBook,Nummelin84} for the general theory of such processes). 

The number of SAOs is given by 
\begin{equation}
 \label{res05}
N = \inf\bigsetsuch{n\geqs 1}{R_n=\cemetery} \in\N\cup\set{\infty} 
\end{equation} 
(we set $\inf\emptyset=\infty$). 
A suitable extension of the well-known Perron--Frobenius theorem 
(see \cite{Jentzsch1912,KreinRutman1950,Birkhoff1957}) shows that $K$ admits a
maximal eigenvalue $\lambda_0$, which is real and simple. It is called the
\defwd{principal eigenvalue}\/ of $K$. If there exists a probability measure
$\pi_0$ such that $\pi_0K=\lambda_0\pi_0$, it is called
the \defwd{quasi-stationary distribution (QSD)}\/ of
the kernel~$K$~\cite{Seneta_VereJones_1966}.


Our first main result gives qualitative properties of the distribution of $N$
valid in all parameter regimes with nonzero noise. 

\begin{theorem}[General properties of $N$]
\label{thm_geometric} 
Assume that $\sigma_1,\sigma_2>0$. Then for any initial distribution $\mu_0$ of
$R_0$ on the curve $\cF$,
\begin{itemiz}
\item	the kernel $K$ admits a quasi-stationary distribution $\pi_0$;
\item	the associated principal eigenvalue
$\lambda_0=\lambda_0(\eps,a,c,\sigma_1,\sigma_2)$ is strictly smaller than
$1$;
\item	the random variable $N$ is almost surely finite;
\item	the distribution of $N$ is \lq\lq asymptotically geometric\rq\rq, that
is, 
\begin{equation}
 \label{res06}
\lim_{n\to\infty} \bigpcondin{\mu_0}{N=n+1}{N>n} = 1 - \lambda_0\;; 
\end{equation}  
\item	$\expecin{\mu_0}{r^N} < \infty$ for $r<1/\lambda_0$ and thus all
moments $\expecin{\mu_0}{N^k}$ of $N$ are finite.
\end{itemiz}
\end{theorem}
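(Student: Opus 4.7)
The plan is to apply a Krein--Rutman-type extension of the Perron--Frobenius theorem to the substochastic kernel $K$, and then to deduce the remaining assertions from standard spectral consequences. The first ingredient is a regularity and positivity statement for $K$. Since both noises in~\eqref{res02} are nondegenerate, the generator of $(x_t,y_t)$ is uniformly elliptic with smooth coefficients, and classical parabolic regularity yields a smooth density for the first-hitting distribution of $(r_t,\varphi_t)$ on the boundary of the rectangle $[0,1]\times[-2M\pi,2\pi]$, started from any interior point $(R,0)$. Restricting this distribution to the portion of the boundary corresponding to $\varphi_\tau\in\set{2\pi,-2M\pi}$ and identifying it with $\cF$ yields a kernel $K(R,\6 R')$ with a continuous density $k(R,R')$ on $E\times E$. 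Strict positivity $k(R,R')>0$ everywhere on $E\times E$ follows from the Stroock--Varadhan support theorem, by explicitly building a smooth control curve between any two points of $\cF$ that winds once around $P$ while remaining inside $\cD\setminus\cB$. Excursions into $\cB$ and the negatively-oriented winding events are handled by concatenating $K$ with auxiliary hitting sub-kernels inheriting the same regularity and positivity, and they do not affect the spectral picture.

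Because $k$ is continuous and bounded up to the boundary, the operator $K$ is Hilbert--Schmidt, hence compact, on $L^2(E)$, and strict positivity of $k$ gives irreducibility. The Krein--Rutman theorem together with its Jentzsch--Birkhoff refinement (already cited in the excerpt) then produces a simple maximal real eigenvalue $\lambda_0$ equal to the spectral radius of $K$, a strictly positive right-eigenfunction $h_0$, and a probability measure $\pi_0$ with $\pi_0K=\lambda_0\pi_0$; this is the desired QSD. To see that $\lambda_0<1$, note that nondegeneracy of the noise gives $K(R,E)<1$ pointwise, and continuity of $R\mapsto K(R,E)$ upgrades this to $\sup_R K(R,E)<1$; substituting into $Kh_0=\lambda_0 h_0$ and taking sup-norms forces $\lambda_0\leqs\sup_R K(R,E)<1$.

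For the remaining three bullets, write the survival probability of the chain as $\bigprobin{\mu_0}{N>n}=\mu_0K^n\indicator{E}$. Compactness of $K$ provides a spectral gap below $\lambda_0$, yielding a rank-one asymptotic decomposition
\[
\bigprobin{\mu_0}{N>n}=c(\mu_0)\,\lambda_0^n+\Order{\lambda_1^n}
\]
with $\lambda_1<\lambda_0$ and $c(\mu_0)>0$; positivity of $c(\mu_0)$ follows because $\mu_0$ is a probability measure on $\cF$ and the left principal eigenmeasure has full support on $E$, again by positivity of $k$. Dividing successive terms yields~\eqref{res06}; summability of the tails gives $N<\infty$ almost surely; and the geometric tail immediately delivers $\bigexpecin{\mu_0}{r^N}<\infty$ for every $r<1/\lambda_0$, hence finiteness of all moments.

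The main obstacle is the spectral setup itself, namely obtaining sufficient regularity and strict positivity of $k$ up to the boundary of $\cF$ near $\partial\cD$, where the diffusion can exit with nonzero tangential velocity. Some bookkeeping is also needed to incorporate cleanly the wrong-way winding events and the excursions through $\cB$, but since each contributes a further sub-Markov kernel with smooth positive density, the Krein--Rutman framework applies to the composite kernel without modification.
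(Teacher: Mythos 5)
Your overall architecture (smooth positive density for the return kernel, a Perron--Frobenius/Krein--Rutman argument giving $\lambda_0$, $h_0$, $\pi_0$, then a spectral gap turning $K^n\indicator{E}$ into $c(\mu_0)\lambda_0^n+\Order{\lambda_1^n}$) is a legitimate alternative to the paper's route, which instead verifies Birkhoff's \emph{uniform positivity} condition $s(x)\nu(A)\leqs K(x,A)\leqs L\,s(x)\nu(A)$ (via the ratio bound of Ben Arous--Kusuoka--Stroock) and then gets existence, uniqueness and the spectral-gap estimate directly from Birkhoff's contraction theorem, with no compactness needed. However, as written your proposal has two genuine gaps. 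First, the proof that $\lambda_0<1$ is a non sequitur: $E=(0,1)$ is not compact, so continuity of $R\mapsto K(R,E)$ together with pointwise $K(R,E)<1$ does not give $\sup_R K(R,E)<1$; moreover the sup-norm step needs $h_0$ bounded, which you have not established. The fix is cheap but different: integrate the eigenvalue relation against the QSD, $\lambda_0=\int_E\pi_0(\6x)K(x,E)$, and use $K(x,E)<1$ pointwise (plus $\pi_0(E)=1$) to conclude strict inequality --- this is exactly what the paper does, and it requires no uniformity.

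Second, the spectral machinery itself rests on an unproved assertion. Compactness via the Hilbert--Schmidt criterion needs $k\in L^2(E\times E)$, and your later passage from an $L^2$ spectral decomposition to asymptotics of $\probin{\mu_0}{N>n}$ for an \emph{arbitrary} initial law $\mu_0$ (e.g.\ a Dirac mass on $\cF$) needs $K$ to map $L^2$ into bounded functions, i.e.\ again control of $k(x,\cdot)$ up to the boundary; you yourself flag boundary regularity near $\partial\cD$ (and near the corners of the rectangle $(0,1)\times(-2M\pi,2\pi)$, and at the $\cB$-excursion and wrong-way-winding matching) as ``the main obstacle'' but then treat it as bookkeeping. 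This is precisely the point the paper's argument is built to avoid: the cited Corollary~2.11 of Ben Arous--Kusuoka--Stroock yields only that $\sup_y k(x,y)\leqs L\inf_y k(x,y)$ uniformly in $x$ (because the distance from the starting line $\varphi=0$ to the target lines is bounded below), and Birkhoff's theorem turns this ratio bound alone into the eigenpair and the estimate $\abs{(K^nf)(x)-\lambda_0^n(\pi_0f)h_0(x)}\leqs M(f)(\lambda_0\rho)^nh_0(x)$, which can be integrated against any $\mu_0$. If you want to keep the compactness/Jentzsch route, you must actually prove square-integrability (or boundedness) of $k$ up to the boundary and supply the one-step $L^2\to L^\infty$ smoothing argument; otherwise the rank-one decomposition and hence the asymptotically geometric law, the a.s.\ finiteness of $N$, and the moment bounds are not yet justified. (Minor additional point: your positivity of $c(\mu_0)$ should be attributed to strict positivity of $h_0$, since $c(\mu_0)=\int\mu_0(\6x)h_0(x)$, not to the support of the left eigenmeasure.)
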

\begin{proof}
Let us denote by $K(x,\6y)$ the kernel defined in~\eqref{res04}. 
We consider $K$ as a bounded linear
operator on $L^\infty(E)$, acting on bounded measurable functions by 
\begin{equation}
 \label{markov12A}
 f(x)\mapsto (Kf)(x) = \int_E K(x,\6y) f(y) = \expecin{x}{f(R_1)} \;,
\end{equation} 
and as a bounded linear operator on $L^1(E)$, acting on finite measures $\mu$ by
\begin{equation}
 \label{markov12B}
\mu(A)\mapsto (\mu K)(A) = \int_E \mu(\6x) K(x,A) = \probin{\mu}{R_1\in A}\;.
\end{equation} 
To prove existence of a QSD $\pi_0$, we first have to establish a
uniform positivity condition on the kernel. 
Note that in $K(x,\6y)$,  $y$ represents the first-exit location from the domain
$\cG =
(0,1)\times(-2M\pi,2\pi)$, for an initial condition $(x,0)$, in case the exit
occurs through one of the lines $\varphi=-2M\pi$ or $\varphi=2\pi$. In harmonic
analysis, $K(x,\6y)$ is called the harmonic measure for the generator of the
diffusion in $\cG$ based at $(x,0)$. In the case of Brownian motion, it has been
proved in~\cite{Dahlberg1977} that sets of positive Hausdorff measure have
positive harmonic measure. This result has been substantially extended
in~\cite{BenArous_Kusuoka_Stroock_1984}, where the authors prove that for a
general class of hypoelliptic diffusions, the harmonic measure admits a smooth
density $k(x,y)$ with respect to Lebesgue measure $\6y$. Our diffusion process
being uniformly elliptic for $\sigma_1,\sigma_2>0$, it enters into the class of
processes studied in that work. 
Specifically,~\cite[Corollary~2.11]{BenArous_Kusuoka_Stroock_1984} shows that
$k(x,y)$ is smooth, and its derivatives are bounded by a function of the
distance from $x$ to $y$. This distance being uniformly bounded below by a
positive constant in our setting, there exists a constant $L\in\R_+$ such that 
\begin{equation}
 \label{geom_01}
\frac{\displaystyle\sup_{y\in E} k(x,y)}{\displaystyle\inf_{y\in E} k(x,y)}
\leqs L
\qquad\forall x\in E\;.
\end{equation} 
We set 
\begin{equation}
 \label{geom_02}
s(x) = \inf_{y\in E} k(x,y)\;. 
\end{equation} 
Then it follows that 
\begin{equation}
 \label{geom_03}
s(x) \leqs k(x,y) \leqs L s(x)  
\qquad\forall x, y\in E\;.
\end{equation} 
Thus the kernel $K$ fulfils the uniform positivity condition
\begin{equation}
 \label{geom_04} 
s(x)\nu(A) \leqs K(x,A) \leqs L s(x)\nu(A)
\qquad\forall x\in E\;,\forall A\subset E
\end{equation} 
for $\nu$ given by the Lebesgue measure. It follows by 
\cite[Theorem~3]{Birkhoff1957} that $K$ admits unique
positive left and right unit eigenvectors, and that the corresponding
eigenvalue $\lambda_0$ is real and positive. In other words, there is a measure
$\pi_0$ and a positive function $h_0$ such that $\pi_0K=\lambda_0\pi_0$ and 
$Kh_0=\lambda_0h_0$.  We normalise the eigenvectors in such a way that 
\begin{equation}
 \label{markov14}
\pi_0(E) = \int_E \pi_0(\6x) = 1\;, 
\qquad
\pi_0h_0 = \int_E \pi_0(\6x) h_0(x) = 1\;. 
\end{equation} 
Thus $\pi_0$ is indeed the quasistationary distribution of the Markov
chain. Notice that 
\begin{equation}
 \label{markov14B}
\lambda_0 = \lambda_0 \pi_0(E)
= \int_E \pi_0(\6x) K(x,E) \leqs \pi_0(E) = 1\;,  
\end{equation} 
with equality holding if and only if $K(x,E)=1$ for $\pi_0$-almost all $x\in E$.
In our case, $K(x,E)<1$ since $E$ has strictly smaller Lebesgue measure than
$\partial\cG$, and the density of the harmonic measure is bounded below. This
proves that $\lambda_0<1$. 

We denote by $K^n$ the $n$-step transition kernel, defined inductively by 
$K^1=K$ and 
\begin{equation}
 \label{markov14C}
 K^{n+1}(x,A) = \int_E K^n(x,\6y) K(y,A)\;. 
\end{equation} 
Lemma~3 in \cite{Birkhoff1957} shows that for any bounded
measurable function $f:E\to\R$, there exists
a finite constant $M(f)$ such that the spectral-gap estimate 
\begin{equation}
 \label{markov100}
\abs{(K^n f)(x) - \lambda_0^n (\pi_0 f) h_0(x)} \leqs M(f)
(\lambda_0\rho)^n h_0(x) 
\end{equation}
holds for some $\rho<1$ (note that this confirms that $\lambda_0$
is indeed the leading eigenvalue of $K$).
In order to prove that $N$ is almost surely finite, we first note that 
\begin{equation}
 \label{markov101}
\bigprobin{\mu_0}{N>n} = \bigprobin{\mu_0}{R_n\in E} 
= \int_E \mu_0(\6x) K^n(x,E) \;. 
\end{equation} 
Applying~\eqref{markov100} with $f=\vone$, the function identically equal to
$1$, we obtain 
\begin{equation}
 \label{markov102}
\lambda_0^n h_0(x) - M(\vone)(\lambda_0\rho)^n h_0(x) 
\leqs K^n(x,E) \leqs 
\lambda_0^n h_0(x) + M(\vone)(\lambda_0\rho)^n h_0(x)\;. 
\end{equation} 
Integrating against $\mu_0$, we get 
\begin{equation}
 \label{markov103}
\lambda_0^n \bigpar{1 - M(\vone)\rho^n} \leqs 
\bigprobin{\mu_0}{N>n}
\leqs \lambda_0^n \bigpar{1 + M(\vone)\rho^n}\;.
\end{equation} 
Since $\lambda_0<1$, it follows that $\lim_{n\to\infty}\probin{\mu_0}{N>n}=0$,
i.e., $N$ is almost surely finite. 

In order to prove that $N$ is asymptotically geometric, we have to control 
\begin{equation}
 \label{markov104}
\bigprobin{\mu_0}{N=n+1} = 
\int_E\int_E  \mu_0(\6x) K^n(x,\6y) \bigbrak{1-K(y,E)}\;. 
\end{equation} 
Applying~\eqref{markov100} with $f(y)=1-K(y,E)$, and using the fact
that 
\begin{equation}
 \label{markov105}
\pi_0 f = 1 - \int_E \pi_0(\6y)K(y,E) = 1 - \lambda_0 
\end{equation} 
yields
\begin{equation}
 \label{markov106}
\lambda_0^n \bigpar{1 - \lambda_0 - M(f)\rho^n} \leqs 
\bigprobin{\mu_0}{N=n+1} \leqs
\lambda_0^n \bigpar{ 1 - \lambda_0 + M(f)\rho^n}\;.
\end{equation} 
Hence~\eqref{res06} follows upon dividing~\eqref{markov106}
by~\eqref{markov103} and taking the limit $n\to\infty$.

Finally, the moment generating function $\expecin{\mu_0}{r^N}$ can be
represented as follows:
\begin{align}
\nonumber
\bigexpecin{\mu_0}{r^N} 
&= \sum_{n\geqs0}r^n\bigprobin{\mu_0}{N=n}
= \sum_{n\geqs0} \biggbrak{1+(r-1)\sum_{m=0}^{n-1} r^m} \bigprobin{\mu_0}{N=n}
\\
&= 1 + (r-1) \sum_{m\geqs0} r^m \bigprobin{\mu_0}{N>m}\;,
\label{markov107} 
\end{align}
which converges for $\abs{r\lambda_0}<1$ as a consequence of~\eqref{markov103}.
\end{proof}

Note that in the particular case where the initial distribution $\mu_0$ is
equal to the QSD $\pi_0$, the random variable $R_n$ has the law
$\mu_n=\lambda_0^n\pi_0$, and $N$ follows an exponential law of parameter
$1-\lambda_0$~:
\begin{equation}
 \label{res07}
\bigprobin{\pi_0}{N=n} = \lambda_0^{n-1}(1-\lambda_0)
\qquad\text{and}\qquad
\bigexpecin{\pi_0}{N} = \frac{1}{1-\lambda_0}\;. 
\end{equation} 
In general, however, the initial distribution $\mu_0$ after a spike will be far
from the QSD $\pi_0$, and thus the distribution of $N$ will only be
asymptotically geometric. 

Theorem~\ref{thm_geometric} allows to quantify the clusters of spikes observed
in~\cite{MuratovVandeneijnden2007}. To this end, we have to agree on a
definition of clusters of spikes. One may decide that a cluster is a sequence of
successive spikes between which there is no complete SAO, i.e. $N=1$ between
consecutive spikes. If the time resolution is not very good, however, one may
also fix a threshold SAO number $n_0\geqs 1$, and consider as a cluster a
succession of spikes separated by at most $n_0$ SAOs. Let $\mu_0^{(n)}$ be the
arrival distribution on $\cF$ of sample paths after the $n$th spike. Then the
probability to observe a cluster of length $k$ is given by 
\begin{equation}
 \label{res08}
\bigprobin{\mu_0^{(0)}}{N\leqs n_0} 
\bigprobin{\mu_0^{(1)}}{N\leqs n_0} \dots
\bigprobin{\mu_0^{(k-1)}}{N\leqs n_0} 
\bigprobin{\mu_0^{(k)}}{N> n_0}\;. 
\end{equation} 
In general, the consecutive spikes will not be independent, and thus the
distributions $\mu_0^{(n)}$ will be different. For small noise, however, after a
spike sample paths strongly concentrate near the stable branch of the
nullcline (see the discussion in~\cite[Section~3.5.2]{BG_neuro09}), and thus
we expect all $\mu_0^{(n)}$ to be very close to some constant
distribution $\mu_0$. This implies that the lengths of clusters of spikes also
follow an approximately geometric distribution~:
\begin{equation}
 \label{res09}
\bigprob{\text{cluster of length $k$}} \simeq p^k(1-p)
\qquad
\text{where }
p = \bigprobin{\mu_0}{N\leqs n_0}\;. 
\end{equation}


\section{The weak-noise regime}
\label{sec_weak}

In order to obtain more quantitative results, we start by transforming the
FitzHugh-Nagumo equations to a more
suitable form. The important part of dynamics occurs near the singular Hopf
bifurcation point. We carry out the transformation in four steps, the first
two of which have already been used in~\cite{BaerErneuxI,BaerErneuxII}~:
\begin{enum}
\item	An affine transformation $x=\alpha_* + u$, $y= \alpha_*^3 -
\alpha_*+ v$ translates the origin to the bifurcation point, and
yields, in the deterministic case~\eqref{res01}, the system
\begin{equation}
 \label{weak01}
\begin{split}
\eps\dot{u} &= v + c\eps u - 3\alpha_* u^2 - u^3\;,\\
\dot{v} &= \delta - u - cv\;,
\end{split} 
\end{equation} 
where $\delta = a - \alpha_* - c(\alpha_*^3-\alpha_*)$ is small and
positive. Note that~\eqref{res01a} implies that $\delta$ is of order
$\alpha-\alpha_*$ near the bifurcation point, and thus measures the distance to
the Hopf bifurcation. In particular, by~\eqref{res01c} the eigenvalues of the
Jacobian matrix $J$ have real parts of order $-\delta/\eps$.
\item	The scaling of space and time given by $u=\sqrt{\eps}\xi$, $v=\eps\eta$
and $t=\sqrt{\eps}t'$ yields 
\begin{equation}
 \label{weak02}
\begin{split}
\dot{\xi} &= \eta - 3\alpha_*  \xi^2 
+ \sqrt{\eps} \bigpar{c\xi -  \xi^3}\;,\\
\dot{\eta} &= \frac{\delta}{\sqrt{\eps}} - \xi - \sqrt{\eps}\,c\eta\;,
\end{split} 
\end{equation} 
where dots now indicate derivation with respect to $t'$. On this scale, the
nullcline $\dot\xi=0$ is close to the parabola $\eta=3\alpha_* \xi^2$. 

\item	The nonlinear transformation $\eta = 3\alpha_* \xi^2+z-1/(6\alpha_* )$
has the effect of straightening out the nullcline, and transforms~\eqref{weak02}
into 
\begin{equation}
 \label{weak03}
\begin{split}
\dot{\xi} &= z - \frac{1}{6\alpha_* } 
+ \sqrt{\eps} \bigpar{c\xi - \xi^3}\;,\\
\dot{z} &= \frac{\delta}{\sqrt{\eps}} - 6\alpha_* \xi z
+ \sqrt{\eps} \biggpar{6\alpha_*
\xi^4 + c\Bigpar{\frac1{6\alpha_*}-9\alpha_*\xi^2-z}}\;.
\end{split} 
\end{equation} 
\item	Finally, we apply the scaling $\xi\mapsto-\xi/3\alpha_* $,
$z\mapsto z/3\alpha_* $, which yields 
\begin{equation}
 \label{weak004}
\begin{split}
\dot{\xi} &= \frac{1}{2} - z +  
\sqrt{\eps} \biggpar{c\xi - \frac{1}{9\alpha_*^2} \xi^3}\;,\\
\dot{z} &= \mu + 2\xi z
+ \sqrt{\eps} \biggpar{\frac{2}{9\alpha_*^2} \xi^4 + c
\Bigpar{\frac12-3\xi^2-z}}\;,
\end{split} 
\end{equation} 
where the distance to the Hopf bifurcation is now measured by the parameter
\begin{equation}
 \label{weak04b}
\mu = \frac{3\alpha_*\delta}{\sqrt{\eps}}\;.
\end{equation}
\end{enum}

\begin{figure}
\centerline{\includegraphics*[clip=true,width=75mm]
{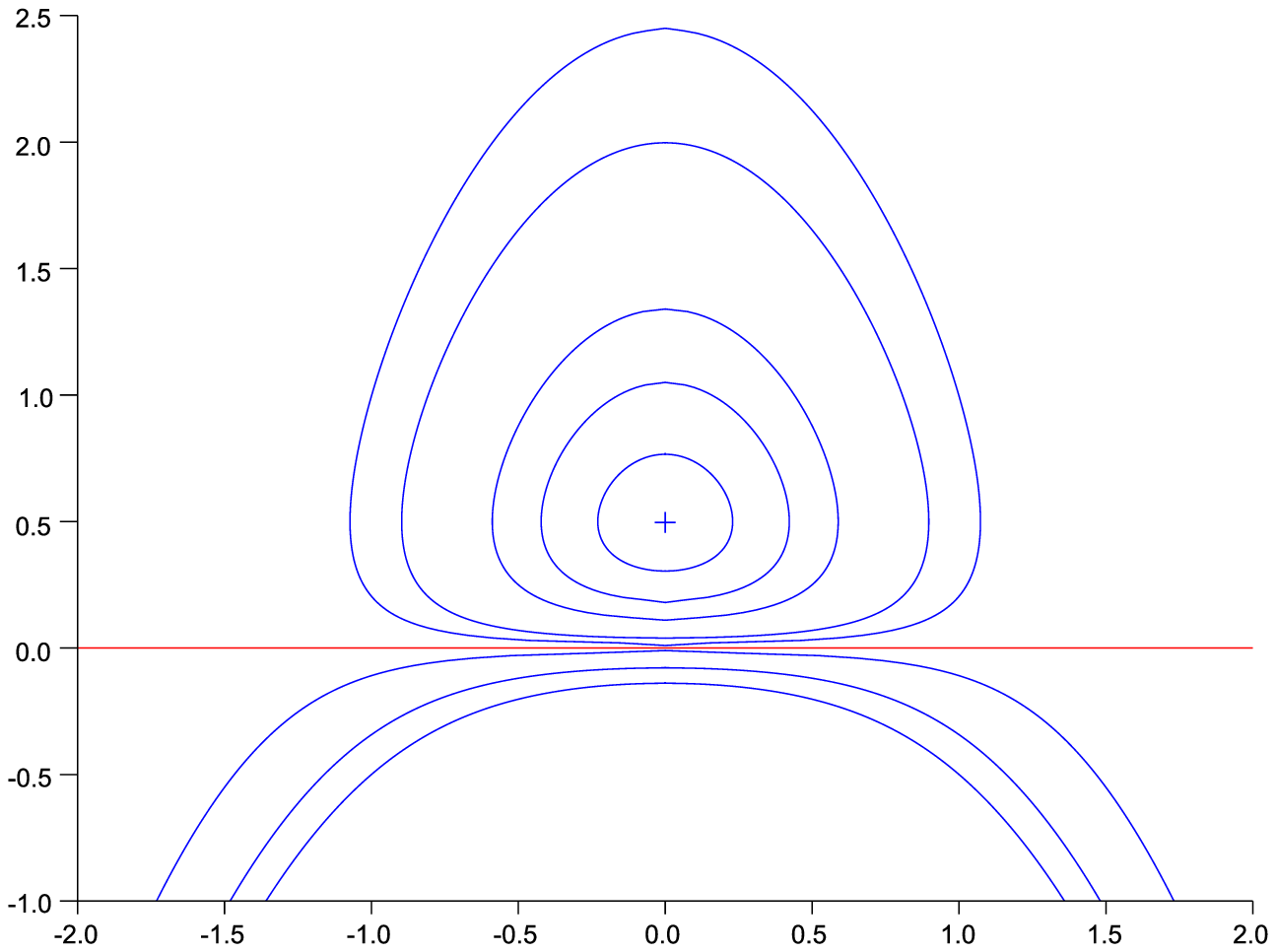}
\vspace{2mm}
\includegraphics*[clip=true,width=75mm]
{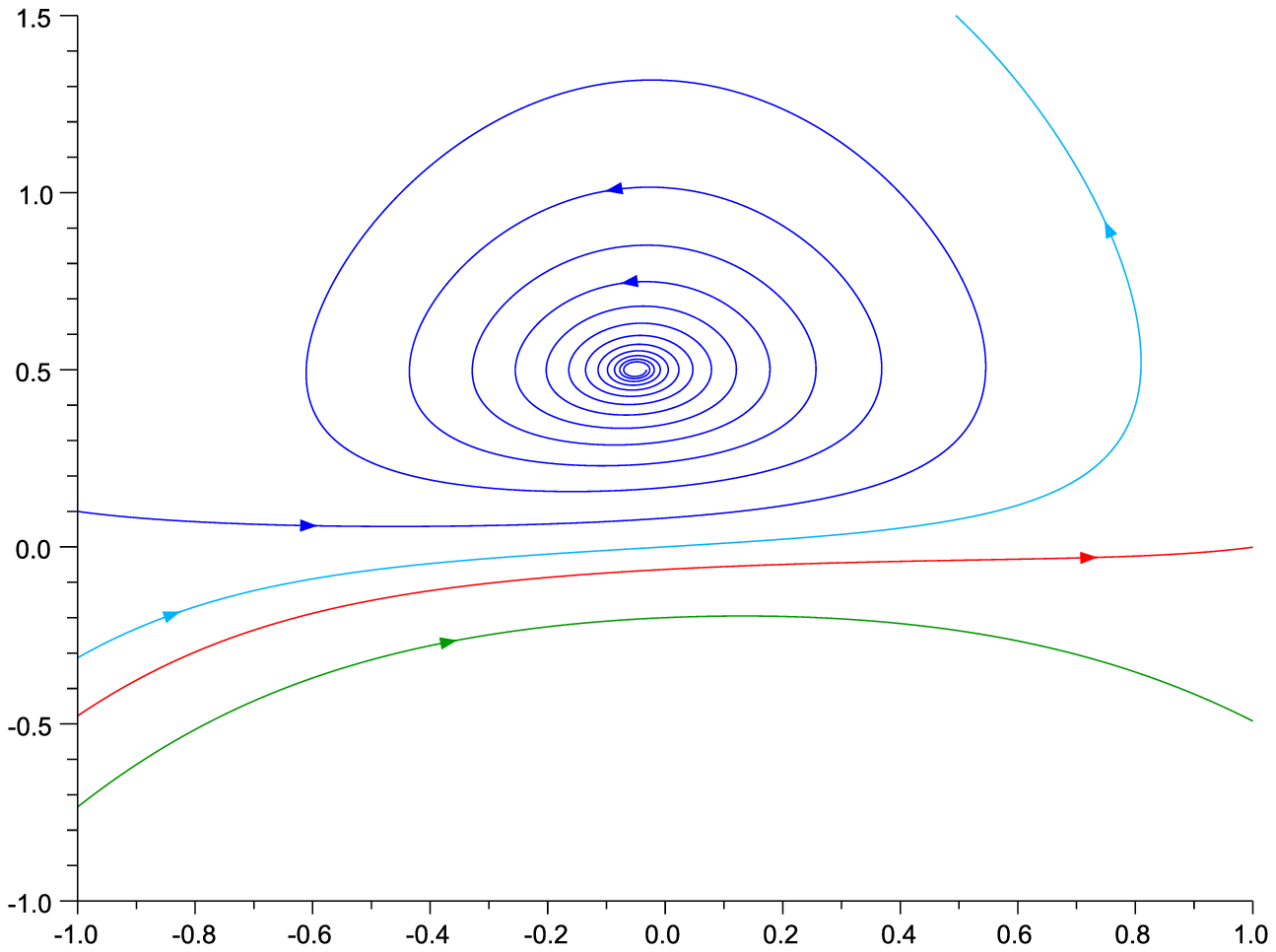}
}
 \figtext{ 
 	\writefig	-0.7	6.0	{\bf (a)}
 	\writefig	6.8	6.0	{\bf (b)}
 	\writefig	6.8	1.2	$\xi$
 	\writefig	0.3	5.9	$z$
 	\writefig	14.5	1.2	$\xi$
 	\writefig	8.0	5.9	$z$
 }
\caption[]{
{\bf (a)} Level curves of the first integral $\KK=2z \e^{-2z - 2\xi^2+1}$. 
{\bf (b)} Some orbits of the deterministic equations~\eqref{weak004} in
$(\xi,z)$-coordinates, for parameter values $\eps=0.01$, $\mu=0.05$ and $c=0$
(i.e.\ $\alpha_*=1/\sqrt{3}$). 
}
\label{fig_FHN_xiz}
\end{figure}

Let us first consider some special cases of the system~\eqref{weak004}. 

\begin{itemiz}
\item
If $\eps=\mu=0$, we obtain 
\begin{equation}
\label{weak0005}
\begin{split}
\dot{\xi } &= \dfrac{1}{2}-z \\
\dot{z} &= 2 \xi z\;. \\
\end{split}
\end{equation}
This system admits a first integral
\begin{equation}
\label{weak0006}
\KK = 2z \e^{-2z - 2\xi^2+1}\;,
\end{equation}
which is equivalent to the first integral found in~\cite{BaerErneuxII} (we
have chosen the normalisation in such a way that $\KK \in [0,1]$ for $z\geqs0$).
\figref{fig_FHN_xiz}a shows level curves of $\KK$. 
The level curve $\KK=0$ corresponds to the horizontal $z=0$. Values of
$\KK\in(0,1)$ yield periodic orbits, contained in the upper half plane
and encircling the stationary point $P=(0,1/2)$ which corresponds to
$\KK=1$. Negative values of $\KK$ yield unbounded orbits. Hence the horizontal
$z=0$ acts as the separatrix in these coordinates. 

\item	If $\mu>0$ and $\eps=0$, the stationary point $P$ moves to $(-\mu,1/2)$
and becomes a focus. The separatrix is deformed and now lies in the negative
half plane. It delimits the basin of attraction of $P$.  

\item	If $\mu>0$ and $0<\eps\ll1$, the dynamics does not change
\emph{locally}  (\figref{fig_FHN_xiz}b). The global dynamics, however, is
topologically the same as in
original variables. Therefore, orbits below the separatrix are no longer
unbounded, but get \lq\lq reinjected\rq\rq\ from the left after making a large
excursion in the plane, which corresponds to a spike. Orbits above the
separatrix still converge to $P$, in an oscillatory way. 
\end{itemiz}

Carrying out the same transformations for the stochastic system~\eqref{res02}
yields the following result (we omit the proof, which is a straightforward
application of It\^o's formula).

\begin{prop}
\label{prop_xiz}
In the new variables $(\xi,z)$, and on the new timescale $t/\sqrt{\eps}$, the
stochastic FitzHugh--Nagumo equations~\eqref{res02} take the form 
\begin{equation}
\label{Rxiz02}
\begin{split}
\6\xi_t &= \biggbrak{ \dfrac{1}{2} - z_t + \sqrt{\eps} \Bigpar{c\xi_t -
\frac{1}{9\alpha_*^2} \xi_t^3} }
\6t + \tilde{\sigma}_1  \6W_t^{(1)} \;,\\
\6z_t &= \biggbrak{ \tilde{ \mu}  + 2 \xi_t  z_t + \sqrt{\eps}
\biggpar{\frac{2}{9\alpha_*^2} \xi_t^4 + c
\Bigpar{\frac12-3\xi_t^2-z_t}}}  \6t - 2 \tilde{\sigma}_1  \xi_t \6W_t^{(1)}+
\tilde{\sigma}_2  
\6W_t^{(2)}\;,
\end{split}
\end{equation}
where
\begin{align}
\nonumber
\tilde{\sigma}_1 &= - 3\alpha_* \eps^{-3/4}  \sigma_1 \;,\\
\label{Rxiz19}
\tilde{\sigma}_2 &= 3\alpha_* \eps^{-3 /4}  \sigma_2 \;,\\
\nonumber
\tilde{\mu} &= \mu-\tilde{\sigma}_1^2
= \frac{3\alpha_*(\delta-3\alpha_*\sigma_1^2/\eps)}
{\sqrt{\eps}}\;.
\end{align} 
\end{prop}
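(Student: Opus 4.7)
The proof is essentially a change-of-variables computation: I would apply the four transformations of Section~\ref{sec_weak} one after the other to the SDE system~\eqref{res02}, invoking It\^o's formula at the nonlinear step. The main point to verify is that the It\^o correction coming from the quadratic shift in step~3 produces exactly the $-\tilde\sigma_1^2$ correction in $\tilde\mu$.

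First I would handle steps~1 and~2, which are linear. The affine translation $x=\alpha_*+u$, $y=\alpha_*^3-\alpha_*+v$ leaves the diffusion coefficients unchanged; the deterministic drift becomes the right-hand side of~\eqref{weak01}. The scaling $u=\sqrt\eps\,\xi$, $v=\eps\eta$ together with the time change $t=\sqrt\eps\,t'$ rescales a standard Wiener process by a factor $\eps^{1/4}$, so dividing the equation for $u_t$ by $\sqrt\eps$ produces a noise coefficient $\sigma_1/(\sqrt\eps\cdot\sqrt\eps)\cdot\eps^{1/4}=\sigma_1\eps^{-3/4}$ in front of a new Wiener process $\widetilde W^{(1)}$; the analogous computation for $v$ gives $\sigma_2\eps^{-3/4}$ in the $\eta$ equation. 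The deterministic part reproduces~\eqref{weak02}.

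Step~3 is the only nontrivial one. Writing $z=\eta-3\alpha_*\xi^2+1/(6\alpha_*)$, It\^o's formula gives
\begin{equation*}
\6z_t = \6\eta_t - 6\alpha_*\xi_t\6\xi_t - 3\alpha_*(\6\xi_t)^2.
\end{equation*}
The first term reproduces the deterministic drift of $\eta$ and the noise $(\sigma_2/\eps^{3/4})\6\widetilde W^{(2)}$. The $-6\alpha_*\xi_t\6\xi_t$ term contributes both to the drift (yielding, after combination with the drift of $\eta$, the expression in~\eqref{weak03}) and to the diffusion, generating a noise term $-6\alpha_*\xi_t(\sigma_1/\eps^{3/4})\6\widetilde W^{(1)}$ in the $z$ equation. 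The quadratic variation is $(\6\xi_t)^2=(\sigma_1/\eps^{3/4})^2\6t'=\sigma_1^2\eps^{-3/2}\6t'$, so the It\^o correction produces an additional drift $-3\alpha_*\sigma_1^2\eps^{-3/2}$ in the $z$ equation, which is the seed of the $-\tilde\sigma_1^2$ in $\tilde\mu$.

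Finally, step~4 is linear again: with the convention $\xi_{\text{new}}=-3\alpha_*\xi_{\text{old}}$ and $z_{\text{new}}=3\alpha_*z_{\text{old}}$, the noise coefficient of the $\xi$ equation is multiplied by $-3\alpha_*$, giving $\tilde\sigma_1=-3\alpha_*\eps^{-3/4}\sigma_1$; the coefficient of the independent Brownian motion in the $z$ equation is multiplied by $3\alpha_*$, giving $\tilde\sigma_2=3\alpha_*\eps^{-3/4}\sigma_2$; and the $\widetilde W^{(1)}$-term in the $z$ equation becomes $3\alpha_*\cdot(-6\alpha_*\xi_{\text{old}})(\sigma_1/\eps^{3/4})=-2\xi_{\text{new}}\tilde\sigma_1$, matching~\eqref{Rxiz02}. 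The drift correction $-3\alpha_*\sigma_1^2\eps^{-3/2}$ in the $z$ equation is multiplied by $3\alpha_*$, producing $-9\alpha_*^2\sigma_1^2\eps^{-3/2}=-\tilde\sigma_1^2$, so combining with the leading drift $\mu=3\alpha_*\delta/\sqrt\eps$ from $\delta/\sqrt\eps$ gives $\tilde\mu=\mu-\tilde\sigma_1^2$ as in~\eqref{Rxiz19}. There is no genuine obstacle here, only bookkeeping; the only place where one has to be careful is to remember the It\^o correction in step~3 and to track the sign introduced by $\xi_{\text{old}}\mapsto-\xi_{\text{new}}/(3\alpha_*)$ when rewriting the cross-noise term in step~4.
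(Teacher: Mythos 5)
Your computation is correct and is exactly the argument the paper has in mind: it omits the proof as ``a straightforward application of It\^o's formula'', namely pushing the stochastic system~\eqref{res02} through the four transformations of Section~\ref{sec_weak}, with the only non-mechanical points being the $\eps^{1/4}$ rescaling of the Wiener processes under the time change $t=\sqrt{\eps}\,t'$ and the It\^o correction $-3\alpha_*(\6\xi_t)^2=-3\alpha_*\sigma_1^2\eps^{-3/2}\6t'$ from the quadratic substitution, which after the final scaling yields $\tilde\mu=\mu-\tilde\sigma_1^2$. You track both of these, as well as the sign bookkeeping in $\xi\mapsto-\xi/3\alpha_*$ giving $\tilde\sigma_1=-3\alpha_*\eps^{-3/4}\sigma_1$ and the cross-noise term $-2\tilde\sigma_1\xi_t\6W_t^{(1)}$, so nothing is missing.
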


Note the It\^o-to-Stratonovich correction term $-\tilde{\sigma}_1^2$ in
$\tilde\mu$, which implies that this parameter can be positive or negative,
depending on the value of $\delta$ and the noise intensity.

\begin{figure}
\centerline{\includegraphics*[clip=true,width=70mm]
{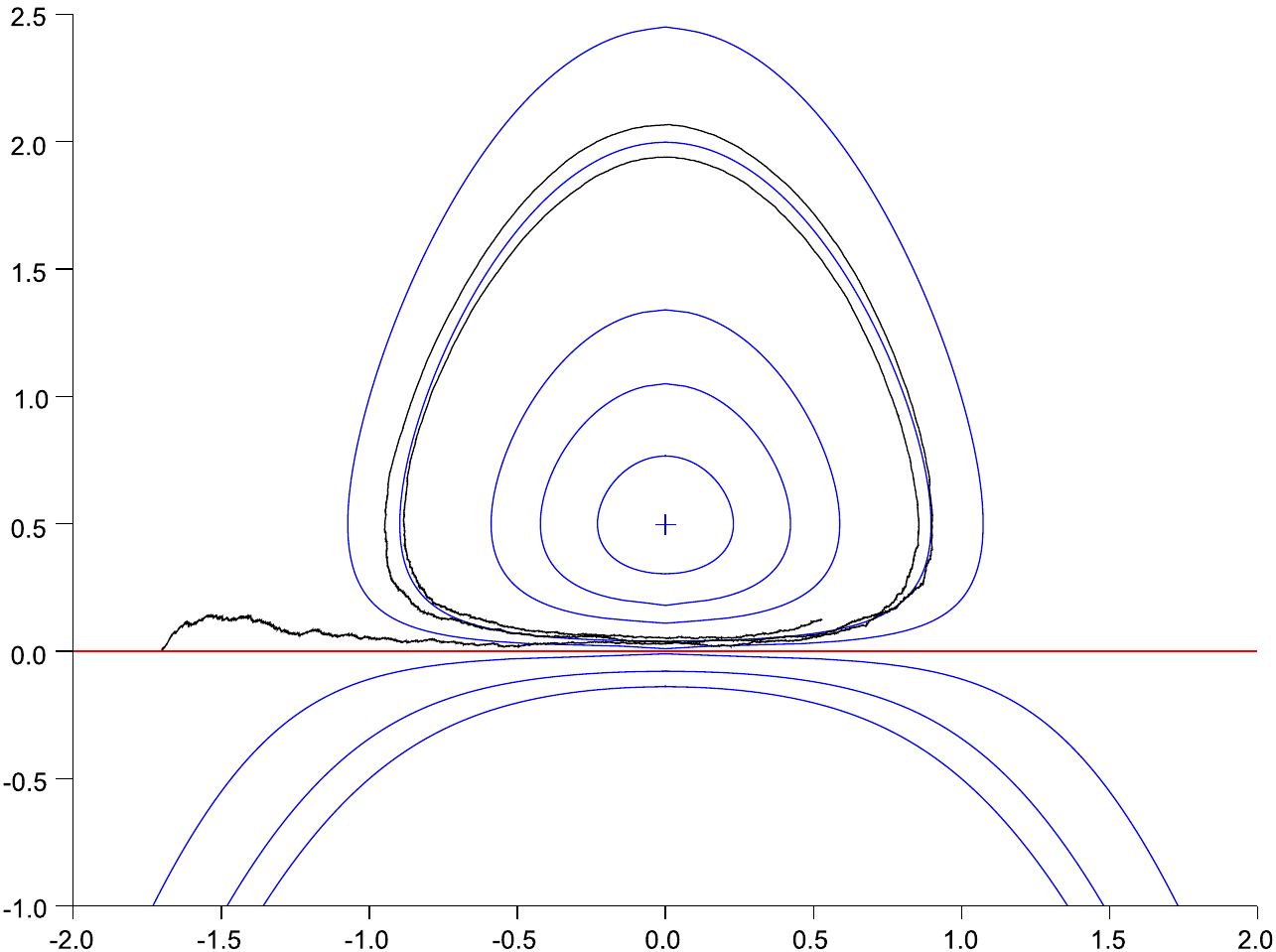}
\hspace{3mm}
\includegraphics*[clip=true,width=70mm]
{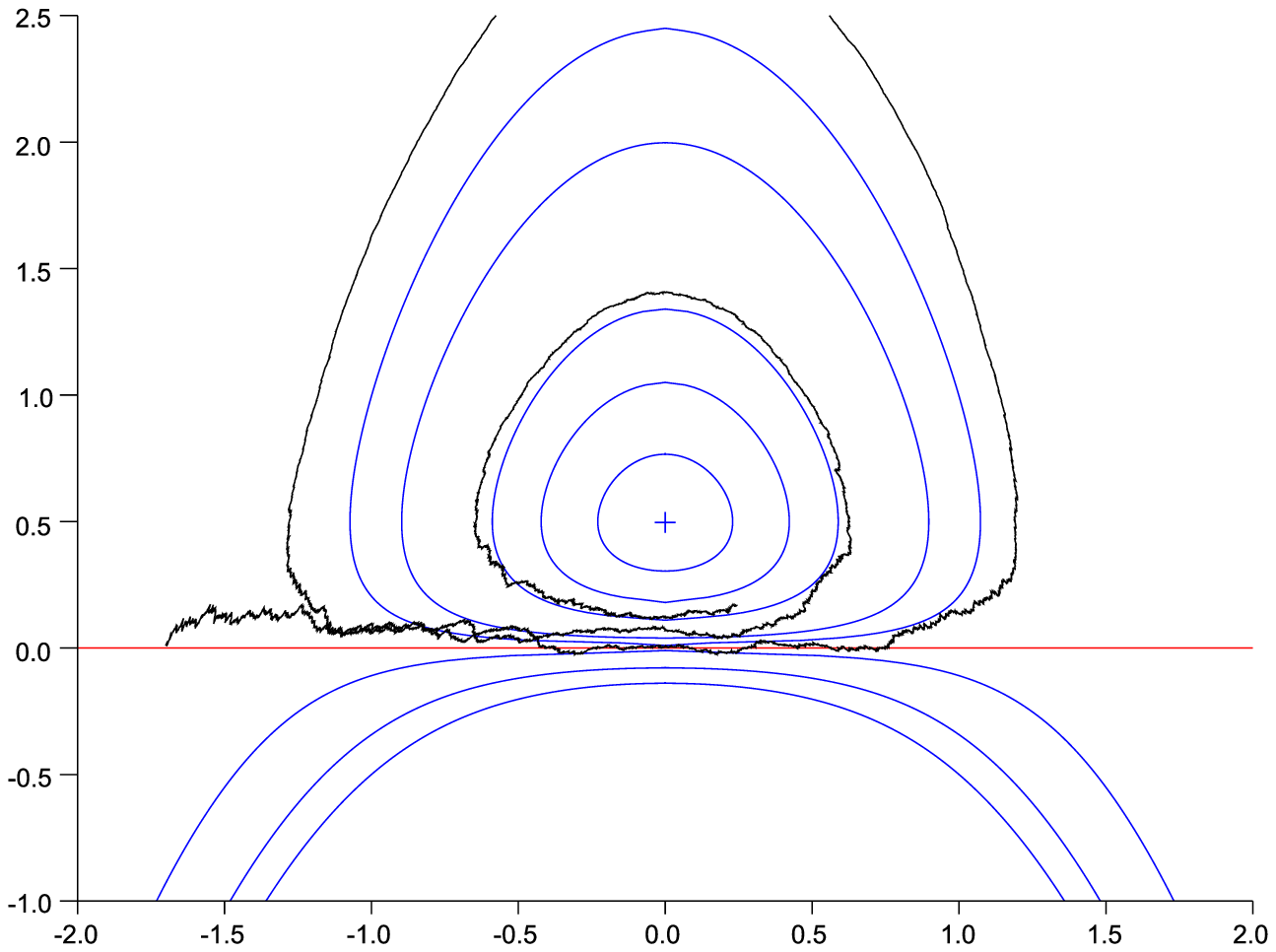}
}
\vspace{3mm}
\centerline{\includegraphics*[clip=true,width=70mm]
{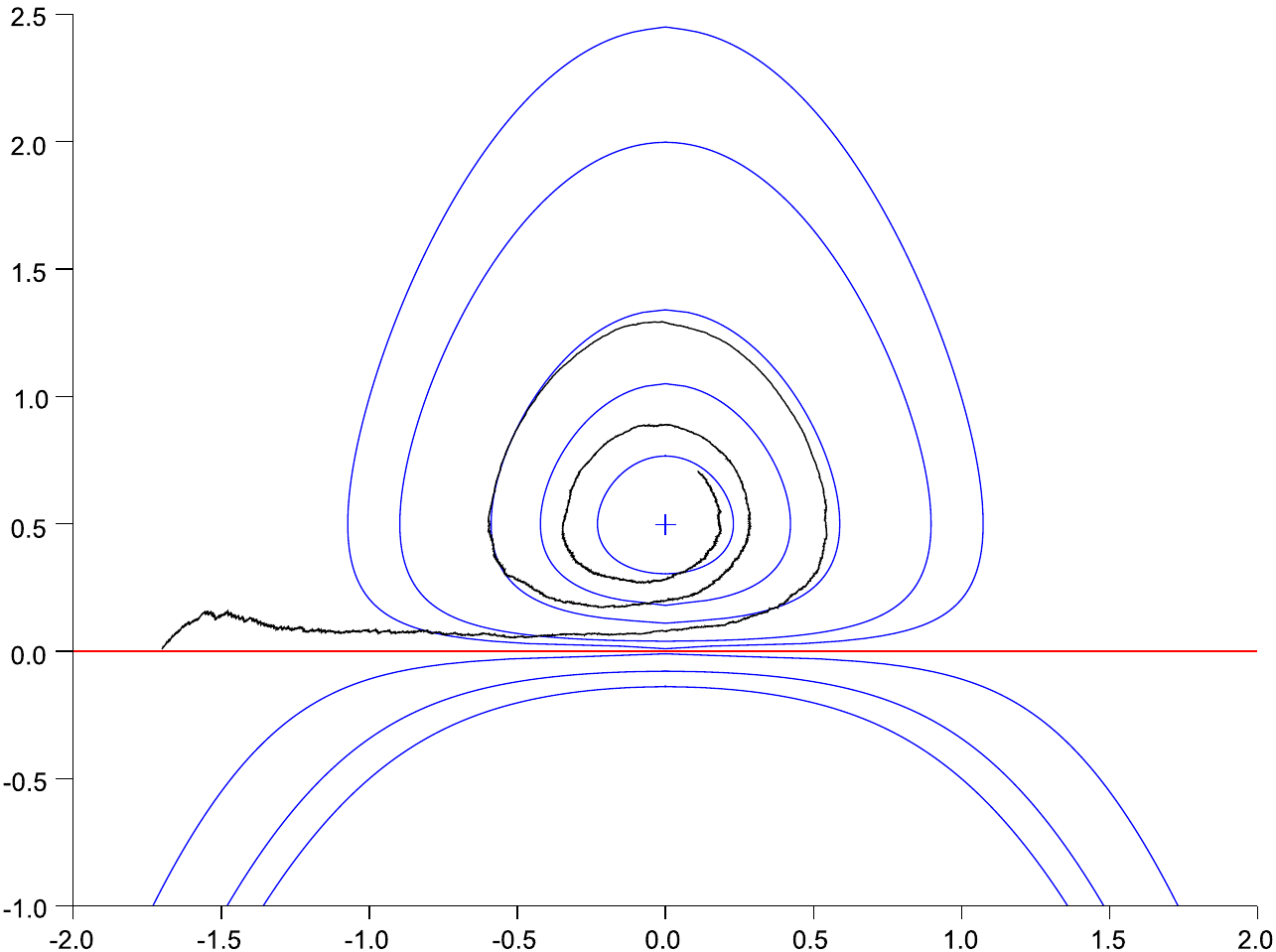}
\hspace{3mm}
\includegraphics*[clip=true,width=70mm]
{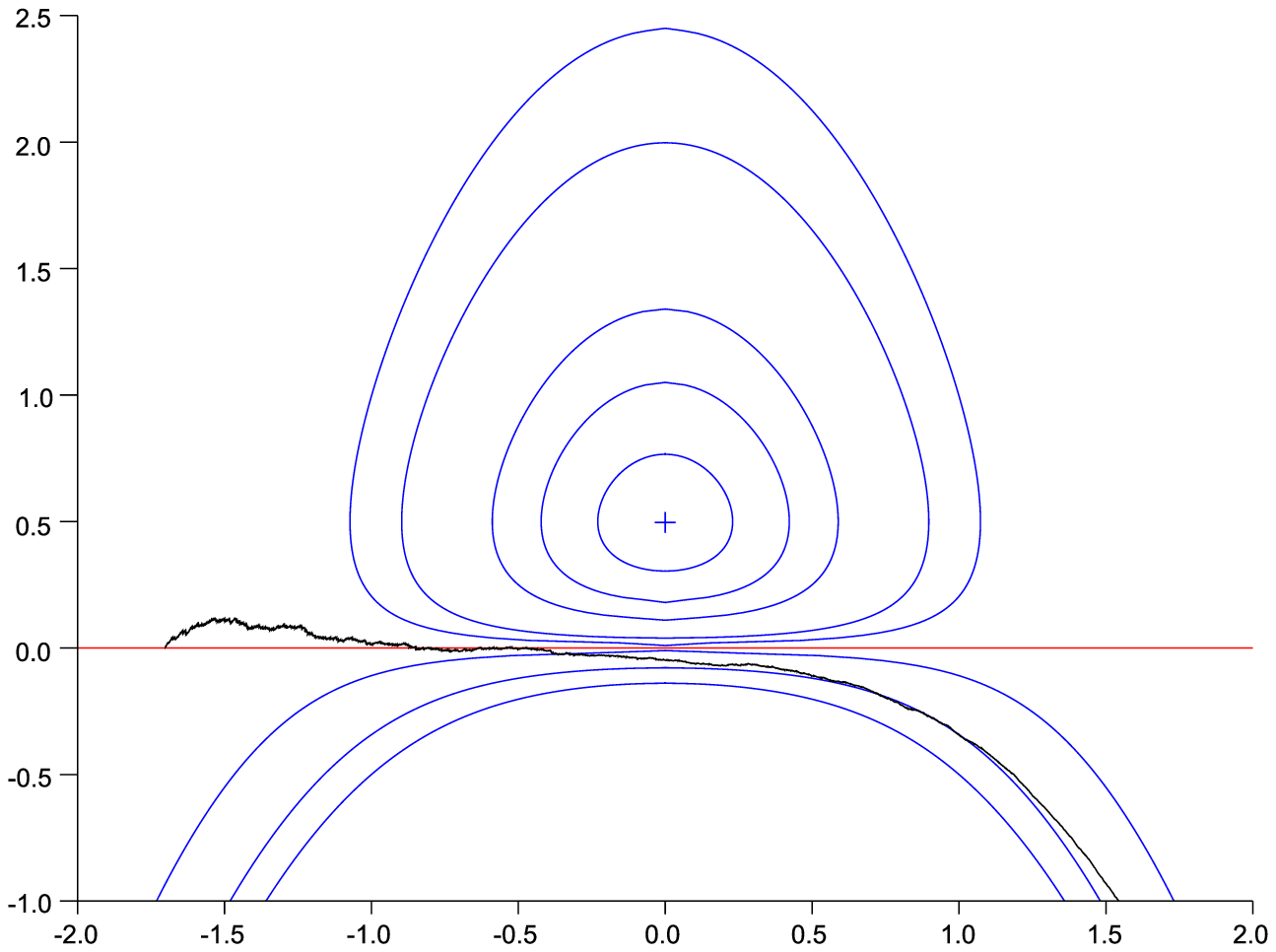}
}
 \figtext{ 
 	\writefig	-0.5	11.0	{\bf (a)}
 	\writefig	7.0	11.0	{\bf (b)}
 	\writefig	-0.5	5.45	{\bf (c)}
 	\writefig	7.0	5.45	{\bf (d)}
 	\writefig	0.6	11.0	$z$
 	\writefig	8.1	11.0	$z$
 	\writefig	0.6	5.45	$z$
 	\writefig	8.1	5.45	$z$
 	\writefig	6.9	6.5	$\xi$
 	\writefig	14.4	6.5	$\xi$
 	\writefig	6.9	0.95	$\xi$
 	\writefig	14.4	0.95	$\xi$
 }
\caption[]{Sample paths of the stochastic equations~\eqref{Rxiz02} (black),
superimposed on the level curves of $\KK$ (blue). The red line is the
separatrix for the case $\eps=\tilde\mu=0$. Parameter values are $c=0$,
$\eps=0.01$ and 
{\bf (a)} $\tilde\mu=0.01$, $\tilde\sigma_1=\tilde\sigma_2 = 0.01$, 
{\bf (b)} $\tilde\mu=0.01$, $\tilde\sigma_1=\tilde\sigma_2 = 0.03$, 
{\bf (c)} $\tilde\mu=0.05$, $\tilde\sigma_1=\tilde\sigma_2 = 0.01$, 
{\bf (d)} $\tilde\mu=-0.05$, $\tilde\sigma_1=\tilde\sigma_2 = 0.013$.
}
\label{fig_path_levelset}
\end{figure}

\figref{fig_path_levelset} shows sample paths of the SDE~\eqref{Rxiz02},
superimposed on the level curves of the first integral $Q$. For sufficiently
weak noise, the sample paths stay close to the level curves. Whether the system
performs a spike or not depends strongly on the dynamics near the separatrix,
which is close to $z=0$ when $\tilde\mu$ and $\eps$ are small. 

To understand better the dynamics close to $z=0$, consider the system 
\begin{equation}
\label{Rxiz02A}
\begin{split}
\6\xi^0_t &= \dfrac{1}{2}\6t  \;,\\
\6z^0_t &= \tilde{ \mu} \6t - 2 \tilde{\sigma}_1  \xi^0_t \6W_t^{(1)} +
\tilde{\sigma}_2  \6W_t^{(2)}\;,
\end{split}
\end{equation}
obtained by neglecting terms of order $z$ and $\sqrt{\eps}$ in~\eqref{Rxiz02}. 
The solution of~\eqref{Rxiz02A} is given by 
\begin{equation}
\label{Rxiz02B}
\begin{split}
\xi^0_t &= \xi^0_0 + \frac12 t  \;,\\
z^0_t &= z^0_0 + \tilde{ \mu} t - 2 \tilde{\sigma}_1  \int_0^t \xi^0_s
\6W_s^{(1)} +
\tilde{\sigma}_2  W_t^{(2)}\;. 
\end{split}
\end{equation}
There is a competition between two terms:  the term $\tilde\mu t$, which pushes
sample paths upwards to the region of SAOs, and the noise terms, whose variance
grows like $(\tilde{\sigma}_1^2+\tilde{\sigma}_2^2)t$. We can thus expect that
if $\tilde\mu>0$ and $\tilde{\sigma}_1^2+\tilde{\sigma}_2^2 \ll \tilde\mu^2$,
then the upwards drift dominates the standard deviation of the noise terms. The
system will be steered to the upper half plane with high probability, and make
many SAOs before ultimately escaping and performing a spike. Going back to
original parameters, the condition $\tilde{\sigma}_1^2+\tilde{\sigma}_2^2 \ll
\tilde\mu^2$ translates into 
\begin{equation}
 \label{weaknoisecondition} 
 \sigma_1^2+\sigma_2^2\ll(\eps^{1/4}\delta)^2\;.
\end{equation} 
If by contrast $\tilde{\sigma}_1^2+\tilde{\sigma}_2^2$ is of the same order as
$\tilde\mu^2$ or larger, or if $\tilde\mu < 0$, then the probability of spiking
will be much larger. 

\begin{figure}
\centerline{\includegraphics*[clip=true,width=80mm]
{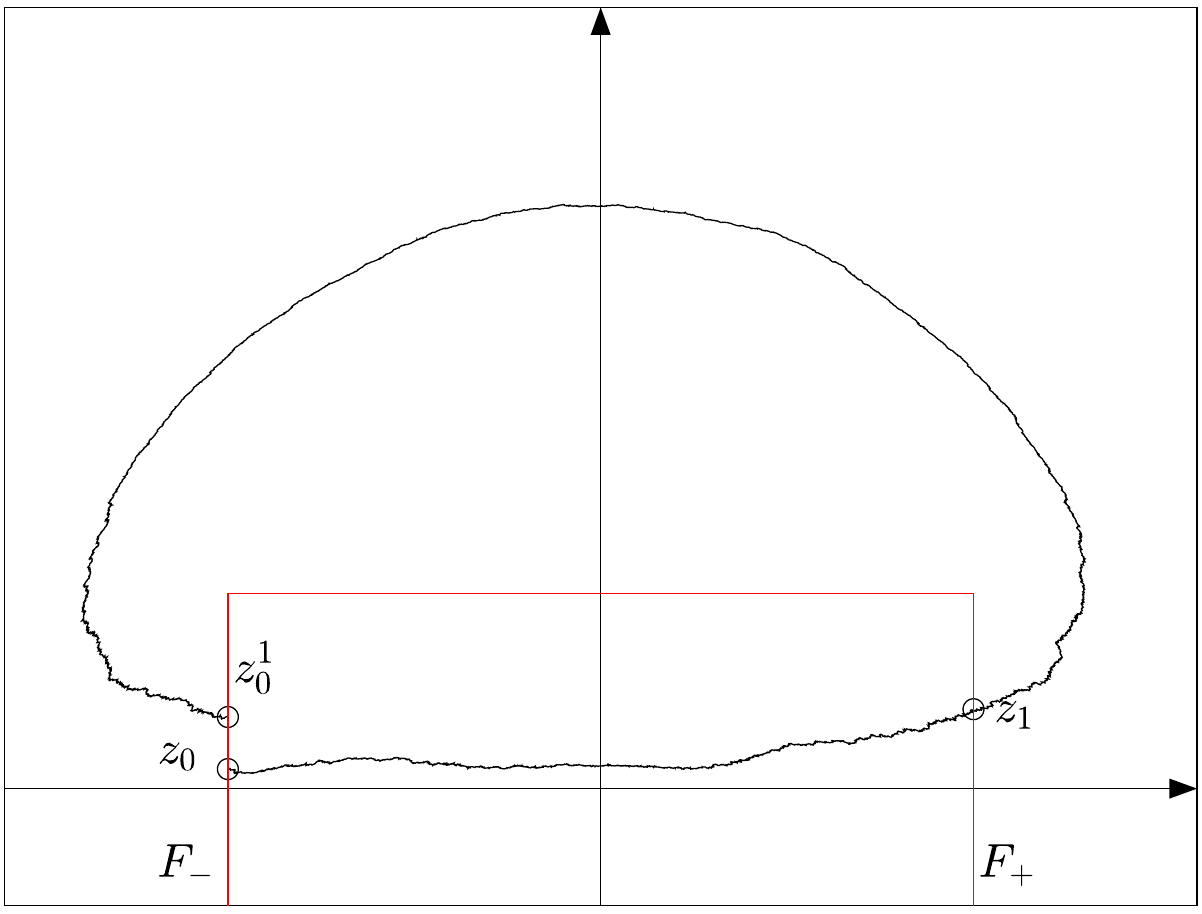}
}
 \figtext{ 
 	\writefig	10.8	0.9	$\xi$
 	\writefig	7.0	6.0	$z$
 	\writefig	7.4	2.7	$1/2$
 	\writefig	9.4	1.0	$L$
 }
\caption[]{Definition of the curves $F_-$ and $F_+$. To approximate the kernel
of the Markov chain, we take an initial condition $z_0$ on $F_-$, and determine
first bounds on the first-hitting point $z_1$ of $F_+$, and then on the
first-hitting point $z_0'$ of $F_-$. 
}
\label{fig_definition_F}
\end{figure}

We now make these ideas rigorous, by proving that spikes are indeed rare in the
regime $\tilde{\sigma}_1^2+\tilde{\sigma}_2^2 \ll \tilde\mu^2$. 
In order to define the Markov chain, we introduce the broken line
\begin{equation}
\label{bounds10}
F_-=\Bigset{ \xi = -L  \text{ and }  z \leqs \frac{1}{2} } \cup \Bigset{-L \leqs
\xi \leqs 0 \text{ and }  z=\frac{1}{2} }\;,
\end{equation}
see \figref{fig_definition_F}. We choose an initial condition $(-L,z_0)$ on
$F_-$, with $z_0$ close to zero. Our aim is to estimate the probability that the
sample path starting in $(-L,z_0)$ returns to $F_-$, after having made one
revolution around $P$, at a point $(-L,z_1)$ with $z_1>z_0$. This will be likely
for weak noise. As a consequence, the set of points in $F_-$ lying above
$(-L,z_0)$ will be mapped into itself by the Markov chain with high probability,
and this fact can be used to estimate the principal eigenvalue $\lambda_0$ and
the expected number of SAOs.

A difficulty of this approach is that the $(\xi,z)$-coordinates are only useful
for small $z$ and bounded $\xi$. For the remaining dynamics, it is in fact much
simpler to work with the first integral $\KK$ (this idea has already been used
in~\cite{MuratovVandeneijnden2007}), and some conjugated angular
variable $\phi$. It would be nice if the $(\KK,\phi)$-coordinates could be used
everywhere, but unfortunately it turns out they are not appropriate when $z$ is
close to $0$ (this is related to the fact that the period of oscillations
diverges near the separatrix). We are thus forced to work with both pairs of
variables, depending on where we are in the phase plane. So we introduce a
second broken line 
\begin{equation}
\label{bounds11}
F_+=\Bigset{ \xi = L  \text{ and }  z \leqs \frac{1}{2} } \cup \Bigset{0 \leqs
\xi \leqs L \text{ and }  z=\frac{1}{2} }\;,
\end{equation}
and use $(\xi,z)$-coordinates to describe the dynamics below $F_-$ and $F_+$,
and $(\KK,\phi)$-coor\-dinates to describe the dynamics above these lines. 

An important point is the choice of the parameter $L$ defining $F_\pm$. On one
hand, it cannot be too large, because we want to treat the terms of order
$\sqrt{\eps}$ in~\eqref{Rxiz02} as small perturbations. On the other hand, the
equations in $(\KK,\phi)$-coordinates contain error terms which
become large in the region of small $z$ and bounded $\xi$. We will work with
$z$ bounded below by a positive constant $c_0$ times $\tilde\mu^{1-\gamma}$ for
some appropriate $\gamma\in(0,1)$. Thus taking 
\begin{equation}
\label{bounds12}
L^2 =\dfrac{\gamma}{2}  \bigpar{-\log (c_- \tilde{\mu})} \;,
\end{equation}
ensures that $\e^{-2L^2} = (c_- \tilde{\mu})^\gamma$, and thus that the
first integral $\KK$ has at least order $\tilde\mu$ when $\abs{\xi}\geqs L$ and
$z \geqs c_0 \tilde\mu^{1-\gamma}$, where we will use $(\KK,\phi)$-coordinates. 

We can now state the main result of this section. 

\begin{theorem}[Weak-noise regime]
\label{thm_weak} 
Assume that $\eps$ and $\delta/\sqrt{\eps}$ are sufficiently small. 
If $c\neq 0$, assume further that $\delta \geqs \abs{c}\eps^\beta$ for some
$\beta<1$. 
Then there exists a constant $\kappa>0$ such that for $\sigma_1^2+\sigma_2^2
\leqs (\eps^{1/4}\delta)^2/\log(\sqrt{\eps}/\delta)$, the principal eigenvalue
$\lambda_0$ satisfies 
\begin{equation}
 \label{weak04}
1 - \lambda_0 \leqs
\exp\biggset{-\kappa\frac{(\eps^{1/4}\delta)^2}{\sigma_1^2+\sigma_2^2}}\;.
\end{equation}  
Furthermore, for any initial distribution $\mu_0$ of incoming sample paths, the
expected number of SAOs satisfies 
\begin{equation}
 \label{weak05}
\bigexpecin{\mu_0}{N} \geqs
C(\mu_0)\exp\biggset{\kappa\frac{(\eps^{1/4}\delta)^2}{\sigma_1^2+\sigma_2^2}}
\;.  
\end{equation} 
Here $C(\mu_0)$ is the probability that the incoming path hits $F_-$ above the 
separatrix. 
\end{theorem}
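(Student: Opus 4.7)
The plan is to exploit the transformed equations~\eqref{Rxiz02} to construct a \emph{trapping set} $A\subset F_-$ that the Markov chain leaves only with exponentially small probability per revolution. Concretely, fix $\gamma\in(0,1)$, set $L$ as in~\eqref{bounds12}, and choose a threshold $z_\star$ of order $\tilde\mu^{1-\gamma}$. Let $A=\bigset{(-L,z)\in F_-:z\geqs z_\star}$. The central estimate to establish is
\begin{equation*}
 \inf_{x\in A}K(x,A) \geqs 1-p\;,
 \qquad\text{with}\qquad
 p \leqs \exp\biggset{-\kappa\frac{(\eps^{1/4}\delta)^2}{\sigma_1^2+\sigma_2^2}}\;.
\end{equation*}
Once this trapping estimate is in place, iterating gives $(K^n\vone)(x)\geqs(1-p)^n$ for every $x\in A$, and combined with the asymptotic~\eqref{markov100} (applied to $f=\vone$ and with $h_0$ bounded) this forces $\lambda_0\geqs 1-p$, which is~\eqref{weak04}. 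The lower bound~\eqref{weak05} then follows by writing $\bigexpecin{\mu_0}{N}=\sum_{n\geqs0}\bigprobin{\mu_0}{N>n}$ and using the Markov property: an incoming path that hits $F_-$ above the separatrix lands in $A$ with probability $C(\mu_0)$, after which the trapping estimate yields $\bigprobin{\mu_0}{N>n}\geqs C(\mu_0)(1-p)^n$, and summing the geometric series gives the desired lower bound of order $C(\mu_0)/p$.

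The main obstacle is the lower portion of the orbit, from $F_-$ to $F_+$, where one must use $(\xi,z)$-coordinates and the separatrix $\set{z=0}$ is dangerously close. Starting from $(-L,z_0)\in A$, I would use the linearisation~\eqref{Rxiz02A}, whose explicit solution~\eqref{Rxiz02B} gives $\xi^0_t=-L+t/2$ and
\begin{equation*}
 z^0_t = z_0 + \tilde\mu\, t - 2\tilde\sigma_1 \int_0^t \xi^0_s\,\6W_s^{(1)} + \tilde\sigma_2 W_t^{(2)}\;.
\end{equation*}
The process $\xi^0$ reaches $L$ at time $T=4L$, and on $[0,T]$ the noise term in $z^0_t$ is a continuous martingale whose quadratic variation is at most $\tfrac{16}{3}L^3\tilde\sigma_1^2+4L\tilde\sigma_2^2$. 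A Bernstein-type maximal inequality then bounds the probability that $z^0_t$ reaches $0$ before $T$ by
\begin{equation*}
 \exp\biggset{-\frac{c\bigpar{z_0+\tilde\mu L}^2}{L^3\tilde\sigma_1^2+L\tilde\sigma_2^2}}\;.
\end{equation*}
With $L^2$ of order $\log(1/\tilde\mu)$ and $z_0\geqs z_\star$, this exponent is at least of order $(\eps^{1/4}\delta)^2/[(\sigma_1^2+\sigma_2^2)(\log(\sqrt\eps/\delta))^{3/2}]$, and the hypothesis $\sigma_1^2+\sigma_2^2\leqs(\eps^{1/4}\delta)^2/\log(\sqrt\eps/\delta)$ is precisely what absorbs the logarithmic factor into the constant $\kappa$. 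The $\sqrt\eps$-corrections discarded in~\eqref{Rxiz02A} are controlled by the assumption $\delta\geqs\abs{c}\eps^{\beta}$, which keeps them uniformly dominated by the drift $\tilde\mu$ over the crossing interval, while smallness of $\delta/\sqrt\eps$ ensures that $\tilde\mu$ itself is small enough for the reduction to~\eqref{Rxiz02A} to be legitimate after a standard Gronwall-type comparison.

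For the upper portion of the orbit, from $F_+$ back to $F_-$, I would switch to $(\KK,\phi)$-coordinates, where $\KK$ is the first integral~\eqref{weak0006}. Applying It\^o's formula to $\KK(\xi_t,z_t)$ and integrating over one deterministic period $T(\KK)$, the update $\KK\mapsto\KK'$ becomes a small random perturbation whose drift is of order $\tilde\mu+\sqrt\eps$ and whose conditional variance is of order $\tilde\sigma_1^2+\tilde\sigma_2^2$ (the coefficients being controlled because $\KK\geqs\text{const}\,\tilde\mu$ on $F_\pm$, by the choice~\eqref{bounds12} of $L$). Gaussian-tail estimates then show that the re-entry point $(-L,z_0')$ on $F_-$ lies in $A$ except with probability bounded by $p$, and concatenating the lower and upper estimates via the strong Markov property at $F_+$ closes the trapping bound and completes the argument.
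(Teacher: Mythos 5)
Your overall skeleton is the same as the paper's: reduce both bounds to a trapping estimate $\inf_{x\in A}K(x,A)\geqs 1-p$ for $A=\{(-L,z)\in F_-: z\geqs z_\star\}$ with $z_\star$ of order $\tilde\mu^{1-\gamma}$, then analyse one revolution in two pieces, $(\xi,z)$-coordinates near the separatrix and $(\KK,\phi)$-coordinates above $F_\pm$. Your derivation of $\lambda_0\geqs 1-p$ from $K^n(x,A)\geqs(1-p)^n$ together with \eqref{markov100} is a legitimate (slightly different) route from the paper's argument via $\pi_0K=\lambda_0\pi_0$, and your geometric-series bound for $\expecin{\mu_0}{N}$ matches the paper's $\expecin{\mu_0}{N}\geqs\mu_0(A)/\eps_A$. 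The gap is in the crossing estimate itself. You base it on the reduced system \eqref{Rxiz02A}, which discards the term $2\xi_t z_t$ in $\6z_t$, and you claim the discarded terms are controlled by $\delta\geqs\abs{c}\eps^\beta$ plus a Gronwall comparison. But $2\xi_t z_t$ is not a $\sqrt{\eps}$- or $c$-correction: it is part of the leading-order drift (present for $c=0$, $\eps=0$), and over the crossing it produces the multiplicative factors $\e^{\pm t^2/2}$ that dominate the dynamics. Keeping it (as the paper does in \eqref{preuve_ecart03}--\eqref{preuve_ecart03A}), a path starting at $z_0\sim\tilde\mu^{1-\gamma}$ on $F_-$ is deterministically contracted to height $z_0\e^{-2L^2}+\Order{\tilde\mu}=\Order{\tilde\mu}$ near $\xi\approx 0$, and the noise is amplified by $\e^{t^2/2}\sim\tilde\mu^{-\gamma}$ (this is exactly what the weight $\zeta(s)$ in Proposition~\ref{prop_xiz_linear} tracks). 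Consequently the true escape probability has exponent of order $\tilde\mu^2/\tilde\sigma^2$, whereas your bound, with exponent $\sim z_0^2/(L^3\tilde\sigma^2)\sim\tilde\mu^{2-2\gamma}/(L^3\tilde\sigma^2)$, is strictly \emph{stronger} than the correct result --- a sign that the neglected coupling is doing essential work and cannot be recovered by a Gronwall factor (which would in any case cost $\e^{\int 2\abs{\xi_s}\6s}=\tilde\mu^{-\Order{\gamma}}$). Your bookkeeping of the logarithm is also off: the hypothesis $\sigma_1^2+\sigma_2^2\leqs(\eps^{1/4}\delta)^2/\log(\sqrt\eps/\delta)$ cannot turn an exponent $\tilde\mu^2/(\tilde\sigma^2 L^3)$ into $\kappa\tilde\mu^2/\tilde\sigma^2$; in the paper the $L$-powers are beaten by $\tilde\mu^{-\gamma}$ factors, not by that hypothesis.

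A second, smaller gap is in the return half of the loop. For the trapping bound you need the re-entry height on $F_-$ not to drop below $z_\star$, so the \emph{sign and size} of the systematic change of $\KK$ over one SAO matter: a negative drift of order $\tilde\mu$ in $\KK$ would translate into a decrease of order $\tilde\mu^{1-\gamma}$ in $z$, comparable to $z_\star$ itself. Saying the drift is ``of order $\tilde\mu+\sqrt\eps$'' and invoking Gaussian tails is therefore not enough; the paper handles this by the averaging change of variables $\Kbar=\KK+\tilde\mu w(\KK,\phi)$ (Lemma~\ref{lem_Kphi_w}), an explicit computation showing the net $\tilde\mu$-contribution $-\tilde\mu w(\KK_0,-\phi_0)/\KK_0$ is positive, the stopping time keeping $\KK^{\cos^2\phi}$ bounded below (the error and diffusion coefficients blow up like $\KK^{-\cos^2\phi}$ near the separatrix, not uniformly in $\KK\geqs\const\tilde\mu$), and, for $c\neq0$, the condition $\sqrt\eps\leqs\tilde\mu^{1+2\gamma+\theta}$ coming from $\delta\geqs\abs{c}\eps^\beta$ --- which is where that hypothesis actually enters, rather than in the crossing step as you use it.
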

\begin{proof}
Let us first show that the problem can be reduced to proving the existence of a
subset $A\subset E$ with positive Lebesgue measure that the Markov chain is
unlikely to leave. This set will then be chosen as the set of points in $F_-$
for which $z$ is larger than some $z_0$ of order $\tilde\mu^{1-\gamma}$. 
Let 
\begin{equation}
 \label{bounds01} 
\eps_A = \sup_{x\in A} \bigbrak{1 - K(x,A)}
\end{equation} 
be the maximal probability to leave $A$ when starting in $A$. 
Let us show that  
\begin{equation}
 \label{bounds02}
\lambda_0 \geqs 1-\eps_A\;.
\end{equation}  
Indeed, the relation $\lambda_0\pi_0=\pi_0 K$ yields 
\begin{align}
\nonumber
\lambda_0 \pi_0(A) &= \int_A \pi_0(\6x)K(x,A) + \int_{E\setminus A}
\pi_0(\6x)K(x,A) \\
&\geqs \pi_0(A) (1-\eps_A) + \int_{E\setminus A} \pi_0(\6x)s(x) \, \nu(A)\;.
 \label{bounds04}
\end{align} 
Either $\pi_0(A)=1$, and the result follows immediately. Or $\pi_0(A)<1$, and
thus $\pi_0(E\setminus A)>0$, so that the second term on the right-hand side is
strictly positive. It follows that $\lambda_0\pi_0(A)>0$, and we
obtain~\eqref{bounds02} upon dividing by $\pi_0(A)$.

Next, let us prove that 
\begin{equation}
 \label{bounds05}
\expecin{\mu_0}{N} \geqs \frac{\mu_0(A)}{\eps_A}\;. 
\end{equation} 
For $x\in A$, let $\theta(x) = \expecin{x}{N} = \sum_{n\geqs0}K^n(x,E)$. Then
$\theta(x)=\lim_{n\to\infty}\theta_n(x)$ where 
\begin{equation}
 \label{bounds06}
\theta_n(x) = \sum_{m=0}^n K^m(x,E)\;.  
\end{equation} 
We have 
\begin{equation}
 \label{bounds07}
\theta_{n+1}(x) = 1 + (K\theta_n)(x) \geqs 1 + \int_A K(x,\6y)\theta_n(y)\;. 
\end{equation} 
Now let $m_n=\inf_{x\in A}\theta_n(x)$. Then $m_0=1$ and 
\begin{equation}
 \label{bounds08}
m_{n+1} \geqs 1 + (1-\eps_A)m_n\;.
\end{equation} 
By induction on $n$ we get 
\begin{equation}
 \label{bounds09}
m_n \geqs \frac{1}{\eps_A} - \frac{(1-\eps_A)^{n+1}}{\eps_A}\;,
\end{equation} 
so that $\expecin{x}{N} = \theta(x) \geqs 1/\eps_A$ for all $x\in A$,
and~\eqref{bounds05} follows upon integrating against $\mu_0$ over $A$.

It thus remains to construct a set $A\subset E$ such that $\eps_A$ is
exponentially small in $\tilde\mu^2/(\tilde\sigma_1^2+\tilde\sigma_2^2)$. The
detailed computations being rather involved, we give them in the appendix, and
only summarise the main steps here. 
\begin{enum}
\item	In the first step, we take an initial condition $(-L,z_0)$ on $F_-$,
with $z_0$ of order $\tilde\mu^{1-\gamma}$ (\figref{fig_definition_F}). It is
easy to show that the
deterministic solution starting in $(-L,z_0)$ hits $F_+$ for the first time at a
point $z^0_T\geqs c_0\tilde\mu^{1-\gamma}$ where $c_0>0$. Consider now the 
stochastic sample path starting in
$(-L,z_0)$. Proposition~\ref{prop_xiz_kernel} in Appendix~\ref{sec_separatrix}
shows that there are constants $C, \kappa_1>0$ such that the sample path hits
$F_+$ for the first time at a point $(L,z_1)$ satisfying 
\begin{equation}
 \label{bounds13}
\bigprob{z_1 < c_0\tilde\mu^{1-\gamma} - \tilde\mu} \leqs 
\frac{C}{\tilde\mu^{2\gamma}} \e^{-\kappa_1\tilde\mu^2/\tilde\sigma^2}\;. 
\end{equation} 
This is done by first approximating~\eqref{Rxiz02} by a linear system, and then
showing that the effect of nonlinear terms is small. 
\item	In the second step, we show that a sample path starting in $(L,z_1)\in
F_+$ returns with high probability to $F_-$ at a point $(-L,z_0')$ with
$z_0'\geqs z_1$. 
Using an averaging procedure for the variables $(\KK,\phi)$, we show that
$\KK$ varies little between $F_+$
and $F_-$. Corollary~\ref{cor_Kphi} in Appendix~\ref{sec_SAO} shows that there
is a $\kappa_2>0$ such that 
\begin{equation}
 \label{bounds15}
\bigpcond{z_0' < z_1}
{z_1 \geqs c_0\tilde\mu^{1-\gamma} - \tilde\mu} \leqs
2\e^{-\kappa_2\tilde\mu^2/\tilde\sigma^2}\;. 
\end{equation}
\end{enum}
In the above results, we assume that either $c=0$, or $c\neq0$ and 
$\tilde\mu^{1+\theta}\geqs\sqrt{\eps}$ for some $\theta>0$ (which follows from
the assumption $\delta > \eps^\beta$). The reason is that
if $c=0$, we can draw on the fact that the error terms of order $\sqrt{\eps}$
in~\eqref{Rxiz02} are positive, while if $c\neq0$ we only know their order. 
Choosing $A$ as the set of points in $F_-$ for which
$z\geqs c_0\tilde\mu^{1-\gamma} - \tilde\mu$, we obtain that $\eps_A$ is bounded
by the sum of~\eqref{bounds13} and~\eqref{bounds15}, and the results follow by
returning to original parameters. 
\end{proof}

Relation~\eqref{weak05} shows that the average number of SAOs between two
consecutive spikes is exponentially large in this regime. Note that each SAO
requires a rescaled time of order $1$ (see Section~\ref{ssec_Kphi}), and thus a
time of order $\sqrt{\eps}$ in original units. It follows that the average
interspike interval length is obtained by multiplying~\eqref{weak05} by a
constant times~$\sqrt{\eps}$. 

Relation~\eqref{res06} shows that the distribution of $N$ is asymptotically
geometric with parameter given by~\eqref{weak04}. Hence the interspike
interval distribution will be close to an exponential one, but with a
periodic modulation due to the SAOs.  


\section{The transition from weak to strong noise}
\label{sec_trans}

We now give an approximate description of how the dynamics changes with
increasing noise intensity. Assume that we start~\eqref{Rxiz02} with an initial
condition $(\xi_0,z_0)$ where $\xi_0 = -L$ for some $L>0$ and 
$z_0$ is small. As long as $z_t$ remains small, we may approximate 
$\xi_t$ in the mean by $\xi_0+t/2$, and thus $z_t$ will be close to the
solution of
\begin{equation}
 \label{trans01} 
\6z^1_t = \Bigpar{ \tilde{ \mu}  + t  z^1_t}  \6t - \tilde{\sigma}_1  t
\6W_t^{(1)}+ \tilde{\sigma}_2 \6W_t^{(2)}\;.
\end{equation}
This linear equation can be solved explicitly. In
particular, at time $T=4L$, $\xi_t$ is close to $L$ and we have the following
result.

\begin{prop}
\label{prop_trans}
Let $2L^2 = \gamma\abs{\log(c_-\tilde\mu)}$ for some $\gamma, c_->0$. Then for
any $H$,  
\begin{equation}
 \label{trans06}
 \bigprob{z^1_T \leqs -H} = \Phi\left(-\pi^{1/4}
\frac{\tilde\mu}{\tilde\sigma} \biggbrak{1 +
\BigOrder{(H+z_0)\tilde\mu^{\gamma-1}}}\right)\;, 
\end{equation}
where $\tilde\sigma^2=\tilde\sigma_1^2 + \tilde\sigma_2^2$ and 
$\Phi(x)=\int_{-\infty}^x \e^{-u^2/2}\6u/\sqrt{2\pi}$ is the distribution
function of the standard normal law. 
\end{prop}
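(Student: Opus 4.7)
The plan is to exploit that~\eqref{trans01} is a linear SDE with deterministic (time-dependent) coefficients, so $z^1_T$ is automatically a Gaussian random variable; it therefore suffices to compute its mean and variance, after which~\eqref{trans06} reduces to an explicit evaluation of the standard normal CDF.

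First I would apply the integrating factor $\e^{-t^2/2}$ associated with the homogeneous ODE $\dot z = tz$. Setting $u_t = \e^{-t^2/2} z^1_t$, It\^o's formula gives
\begin{equation*}
\6u_t = \e^{-t^2/2} \bigbrak{\tilde\mu\,\6t - \tilde\sigma_1 t\,\6W_t^{(1)} + \tilde\sigma_2\,\6W_t^{(2)}}\;.
\end{equation*}
Integrating over the time interval of length $T=4L$ (which, with the convention $\xi_t = t/2$, runs from $-2L$ to $2L$) and applying the It\^o isometry yields an explicit Gaussian representation of $z^1_T$ with mean $m = z_0 + \tilde\mu\,\e^{2L^2} I_0(L)$ and variance $v = \e^{4L^2}\bigbrak{\tilde\sigma_1^2 I_1(L) + \tilde\sigma_2^2 I_2(L)}$, where $I_0, I_1, I_2$ are the truncated Gaussian integrals $\int_{-2L}^{2L}\e^{-s^2/2}\,\6s$, $\int_{-2L}^{2L}s^2\e^{-s^2}\,\6s$ and $\int_{-2L}^{2L}\e^{-s^2}\,\6s$.

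Second I would invoke the scaling $2L^2 = \gamma\abs{\log(c_-\tilde\mu)}$, under which $L\to\infty$ as $\tilde\mu\to 0$. Standard tail bounds then give $I_0(L) = \sqrt{2\pi}\bigpar{1 + \BigOrder{\e^{-2L^2}}}$, $I_2(L) = \sqrt{\pi}\bigpar{1 + \BigOrder{\e^{-4L^2}}}$ and $I_1(L) = (\sqrt{\pi}/2)\bigpar{1 + \BigOrder{L^2\e^{-4L^2}}}$, so that, using $\e^{-2L^2} = (c_-\tilde\mu)^\gamma$, the parameters $m$ and $v$ are known to leading order with relative error $\BigOrder{\tilde\mu^\gamma}$.

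Finally I would use $\prob{z^1_T\leqs -H} = \Phi\bigpar{(-H-m)/\sqrt{v}}$ and factor the dominant contribution $\sqrt{2\pi}\,\tilde\mu\,\e^{2L^2}$ out of the numerator. Since $(H+z_0)/(\tilde\mu\,\e^{2L^2}) = \BigOrder{(H+z_0)\tilde\mu^{\gamma-1}}$, this produces the prefactor $-\pi^{1/4}\tilde\mu/\tilde\sigma$ multiplied by $1+\BigOrder{(H+z_0)\tilde\mu^{\gamma-1}}$, as claimed in~\eqref{trans06}. The main obstacle is the algebraic bookkeeping needed to combine the various error contributions so that the exponential factor $\e^{2L^2}$ collapses into the polynomial correction $\tilde\mu^{\gamma-1}$; no deeper analysis is required because the linearity of~\eqref{trans01} makes $z^1_T$ exactly Gaussian.
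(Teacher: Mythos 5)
Your proposal is correct and follows essentially the same route as the paper: solve the linear SDE~\eqref{trans01} by variation of constants (integrating factor $\e^{-t^2/2}$ over $[-2L,2L]$), note that $z^1_T$ is exactly Gaussian, compute its mean and variance as in~\eqref{trans03}--\eqref{trans04}, and plug into $\Phi$ as in~\eqref{trans05}. The only difference is cosmetic: you spell out the asymptotics of the truncated Gaussian integrals and the bookkeeping that turns $\e^{2L^2}=(c_-\tilde\mu)^{-\gamma}$ into the relative error $\Order{(H+z_0)\tilde\mu^{\gamma-1}}$, steps the paper leaves implicit.
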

\begin{proof}
Solving~\eqref{trans01} by variation of the
constant yields 
\begin{equation}
 \label{trans02}
 z^1_T = z_0 + \e^{T^2/2} \biggbrak{\tilde\mu \int_{t_0}^T \e^{-s^2/2}\6s 
- \tilde\sigma_1 \int_{t_0}^T s \e^{-s^2/2} \6W^{(1)}_s 
+ \tilde\sigma_2 \int_{t_0}^T \e^{-s^2/2} \6W^{(2)}_s} \;. 
\end{equation} 
Note that by the choice of $L$, we have 
$\e^{T^2/2}=\e^{2L^2}=(c_-\tilde\mu)^{-\gamma}$. 
The random variable $z^1_T$ is Gaussian, with expectation 
\begin{equation}
 \label{trans03}
\bigexpec{z^1_T} = z_0 + \tilde\mu \e^{2L^2} \int_{-2L}^{2L} \e^{-s^2/2}\6s
\end{equation} 
and variance 
\begin{equation}
\variance(z^1_T) = \tilde\sigma_1^2 \e^{4L^2}\int_{-2L}^{2L} s^2\e^{-s^2} \6s +
\tilde\sigma_2^2 \e^{4L^2} \int_{-2L}^{2L} \e^{-s^2} \6s\;.
 \label{trans04}
\end{equation} 
Using this in the relation 
\begin{equation}
 \label{trans05}
\bigprob{z^1_T \leqs -H} =
\int_{-\infty}^{-H}
\frac{\e^{-(z-\expec{z^1_T})^2/2\variance(z^1_T)}}{\sqrt{2\pi\variance(z^1_T)}}
\6z = 
\Phi\left(-\frac{H+\expec{z^1_T}}{\sqrt{\variance(z^1_T)}}\right)
\end{equation} 
yields the result. 
\end{proof}

Choosing $\gamma$ large enough, the right-hand side of~\eqref{trans06} is
approximately constant for a large range of values of $z_0$ and $H$. The
probability that the system performs no complete SAO before spiking again
should thus behave as 
\begin{equation}
 \label{trans07}
\bigprobin{\mu_0}{N=1} \simeq  \Phi\left(-\pi^{1/4}
\frac{\tilde\mu}{\tilde\sigma}\right) = 
\Phi\left(-
\frac{(\pi\eps)^{1/4}(\delta-3\alpha_*\sigma_1^2/\eps)}{\sqrt{
\sigma_1^2+\sigma_2^2}}
\right)\;.
\end{equation} 
Since $1-\lambda_0$ is equal to the probability of leaving $\cD$ before
completing the first SAO, when starting in the QSD $\pi_0$, we expect that
$1-\lambda_0$ has a similar behaviour, provided $\pi_0$ is concentrated near
$z=0$. We can identify three regimes, depending on the value of
$\tilde\mu/\tilde\sigma$~:
\begin{enum}
\item	{\bf Weak noise~:} $\tilde\mu \gg \tilde\sigma$, which in original
variables translates into $\sqrt{\sigma_1^2+\sigma_2^2} \ll \eps^{1/4}\delta$.
This is the weak-noise regime already studied in the previous section, in
which $\lambda_0$ is exponentially close to $1$, and thus spikes are separated
by long sequences of SAOs.

\item	{\bf Strong noise~:} $\tilde\mu \ll -\tilde\sigma$, which implies $\mu
\ll \tilde\sigma^2$, and in original variables translates into
$\sqrt{\sigma_1^2+\sigma_2^2} \gg \eps^{3/4}$. Then $\prob{N>1}$ is
exponentially small, of order $\e^{-(\sigma_1^2+\sigma_2^2)/\eps^{3/2}}$. Thus
with high probability, there will be no complete SAO between consecutive spikes,
i.e., the neuron is spiking repeatedly. 

\item	{\bf Intermediate noise~:} $\abs{\tilde\mu} = \Order{\tilde\sigma}$,
which translates into $\eps^{1/4}\delta \leqs \sqrt{\sigma_1^2+\sigma_2^2} \leqs
\eps^{3/4}$. Then the mean number of SAOs is of order $1$. In particular, when
$\sigma_1=\sqrt{\eps\delta}$, $\tilde\mu=0$ and thus $\prob{N=1}$ is close
to $1/2$. 
\end{enum}
 
\begin{figure}
\centerline{\hspace{2mm}\includegraphics*[clip=true,width=70mm]
{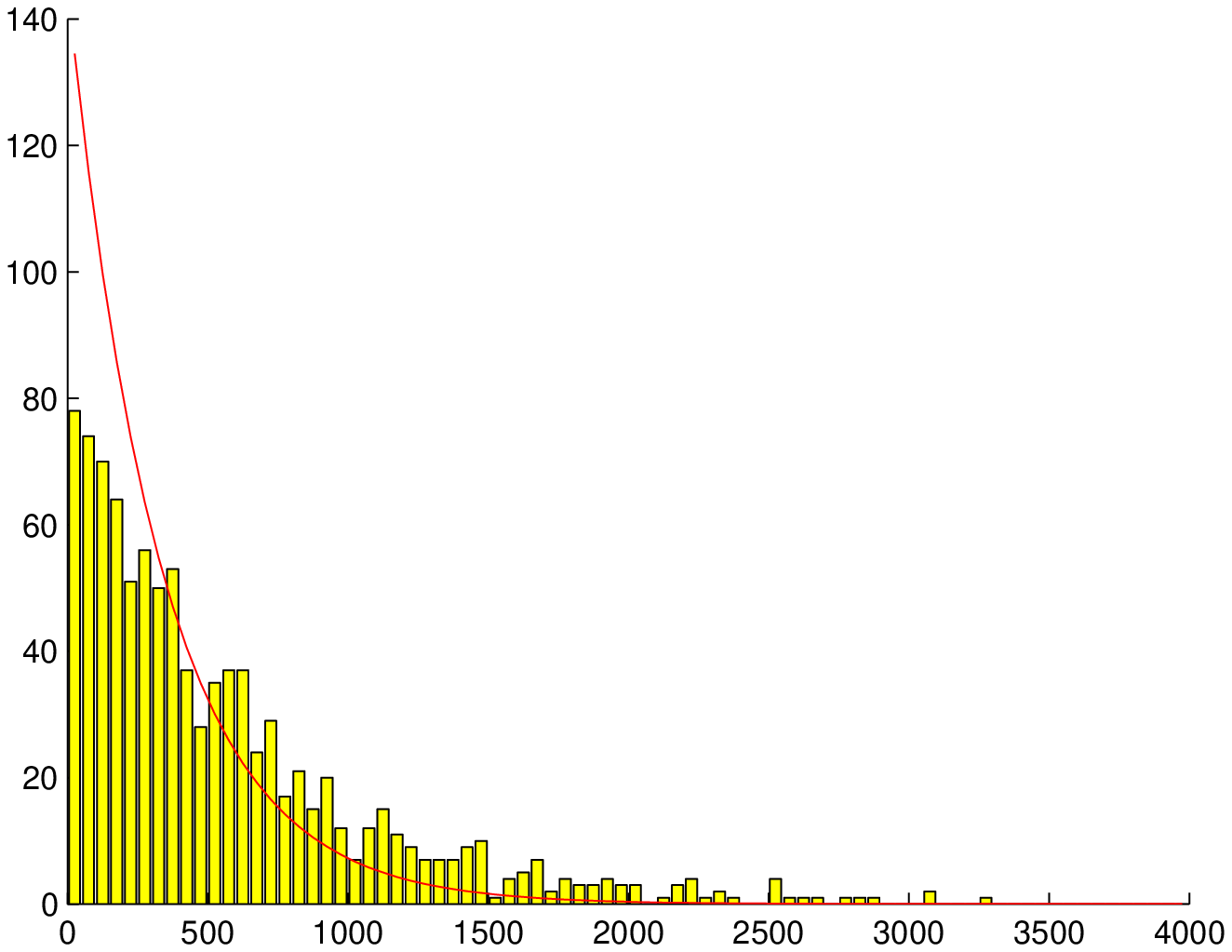}
\hspace{2mm}
\includegraphics*[clip=true,width=70mm]{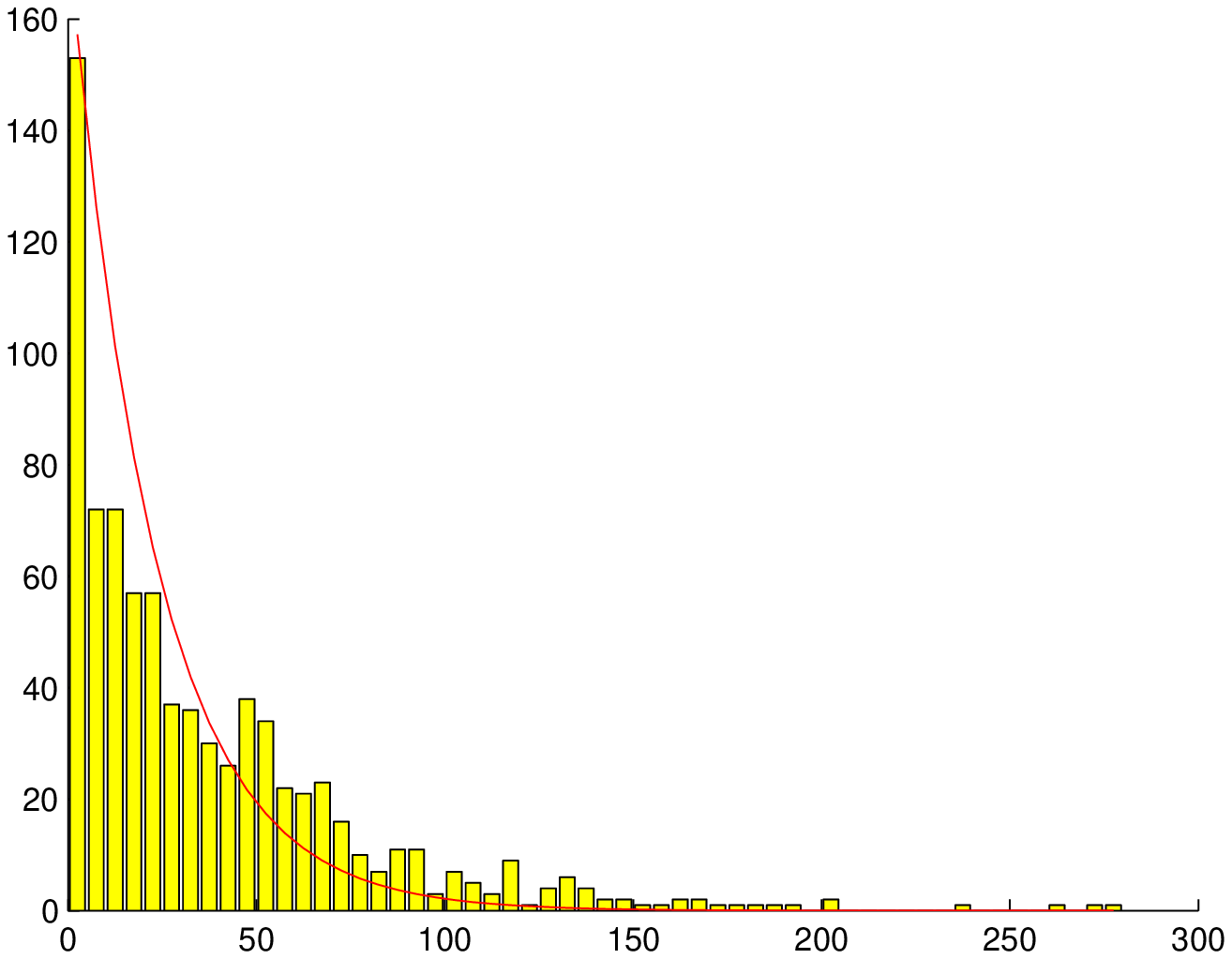}
}
\vspace{2mm}
\centerline{\hspace{2mm}\includegraphics*[clip=true,width=70mm]
{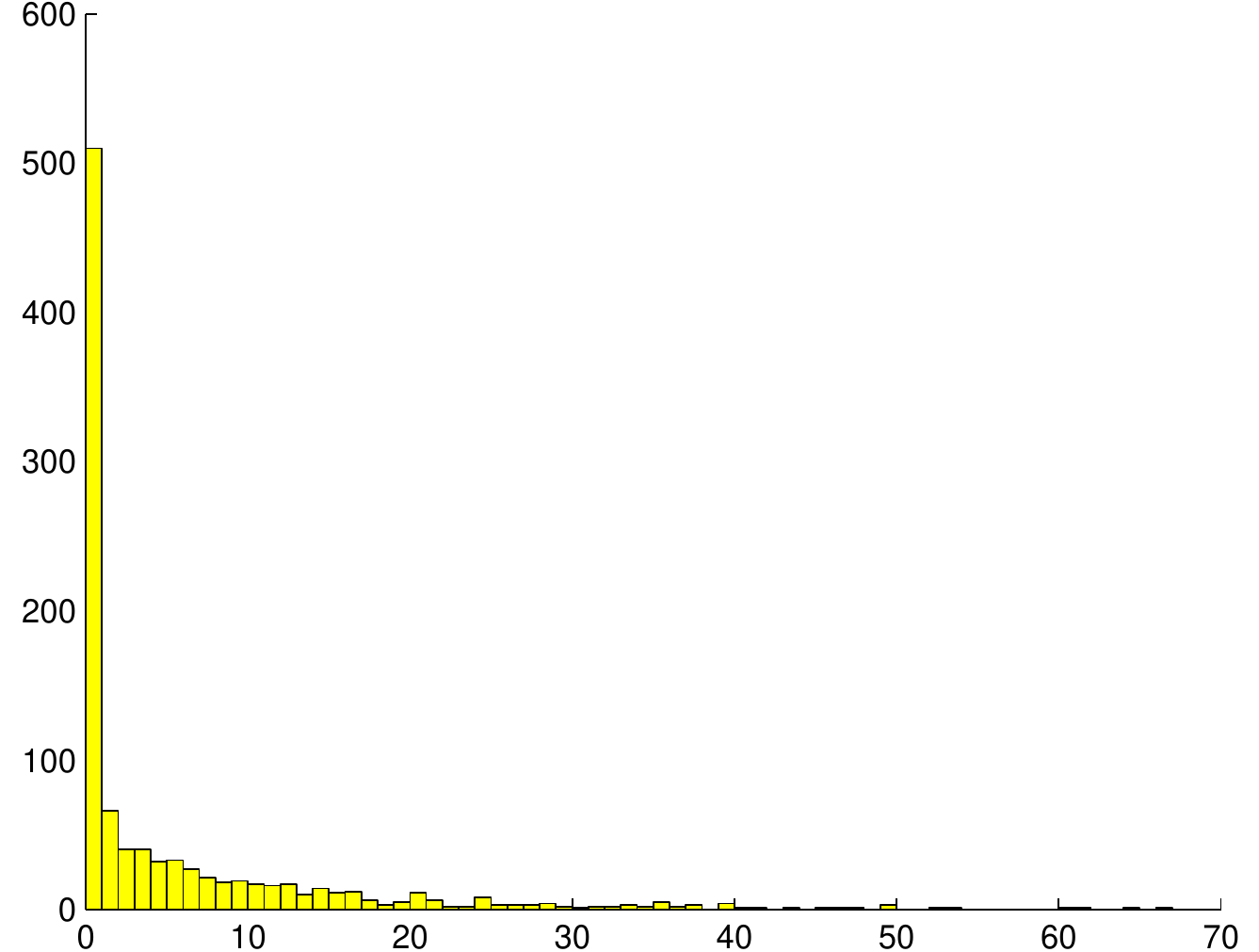}
\hspace{2mm}
\includegraphics*[clip=true,width=70mm]
{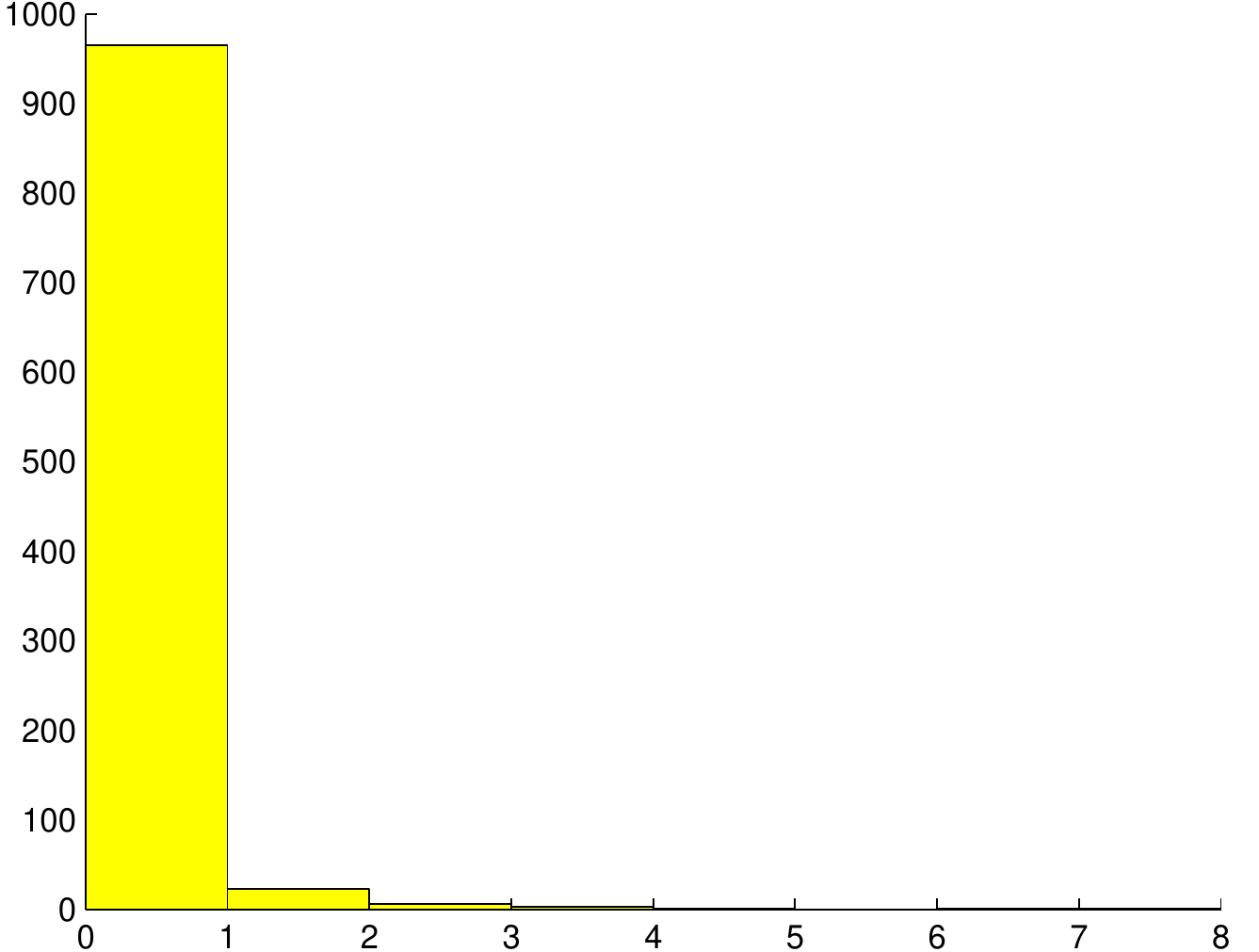}
}
 \figtext{ 
 	\writefig	7.25	6.05	$N$
 	\writefig	-0.25	10.95	{\bf (a)}
 	\writefig	14.7	6.05	$N$
 	\writefig	7.15	10.95	{\bf (b)}
 	\writefig	7.25	0.6	$N$
 	\writefig	-0.25	5.4	{\bf (c)}
 	\writefig	14.7	0.6	$N$
 	\writefig	7.15	5.4	{\bf (d)}
 }
 \vspace{2mm}
\caption[]{Histograms of numerically simulated distributions of the SAO number
$N$, obtained from time series containing $1000$ spikes each. The
superimposed curves show
geometric distributions with parameter $\lambda_0$, where $\lambda_0$ has been
estimated from the expectation of $r^N$, as explained in the text. Parameter
values are $\eps=10^{-4}$ and $\tilde\sigma=0.1$ in all cases, and {\bf (a)}
$\tilde\mu=0.12$, {\bf (b)} $\tilde\mu=0.05$, {\bf (c)} $\tilde\mu=0.01$, and
{\bf (d)} $\tilde\mu=-0.09$ (cf.~\eqref{Rxiz19} for their definition).
}
\label{fig_histograms}
\end{figure}

An interesting point is that the transition from weak to strong noise is
gradual, being characterised by a smooth change of the distribution of $N$ as a
function of the parameters. There is no clear-cut transition at the parameter
value $\sigma_1=\sqrt{\eps\delta}$ obtained in~\cite{MuratovVandeneijnden2007}
(cf.~\figref{fig_bif_diagram}), the only particularity of this parameter value
being that $\prob{N=1}$ is close to $1/2$. In other words, the system decides
between spiking and performing an additional SAO according to the result of a
fair coin flip. The definition of a boundary between the intermediate and
strong-noise regimes mainly depends on how well the SAOs can be resolved in
time. A very good time resolution would put the boundary at noise intensities of
order $\eps^{3/4}$, while a lower time resolution would move in closer to
$\sqrt{\eps\delta}$.


\section{Numerical simulations}
\label{sec_sim}

\figref{fig_histograms} shows numerically simulated distributions of the SAO
number. The geometric decay is clearly visible. In addition, for
decreasing values of $\tilde\mu/\tilde\sigma$, there is an increasing bias
towards the first peak $N=1$, which with our convention corresponds to the
system performing no complete SAO between consecutive spikes. Of course
this does not contradict the asymptotic result~\eqref{res06}, but it shows
that transient effects are important.

\begin{figure}
\centerline{
\includegraphics*[clip=true,width=47mm]
{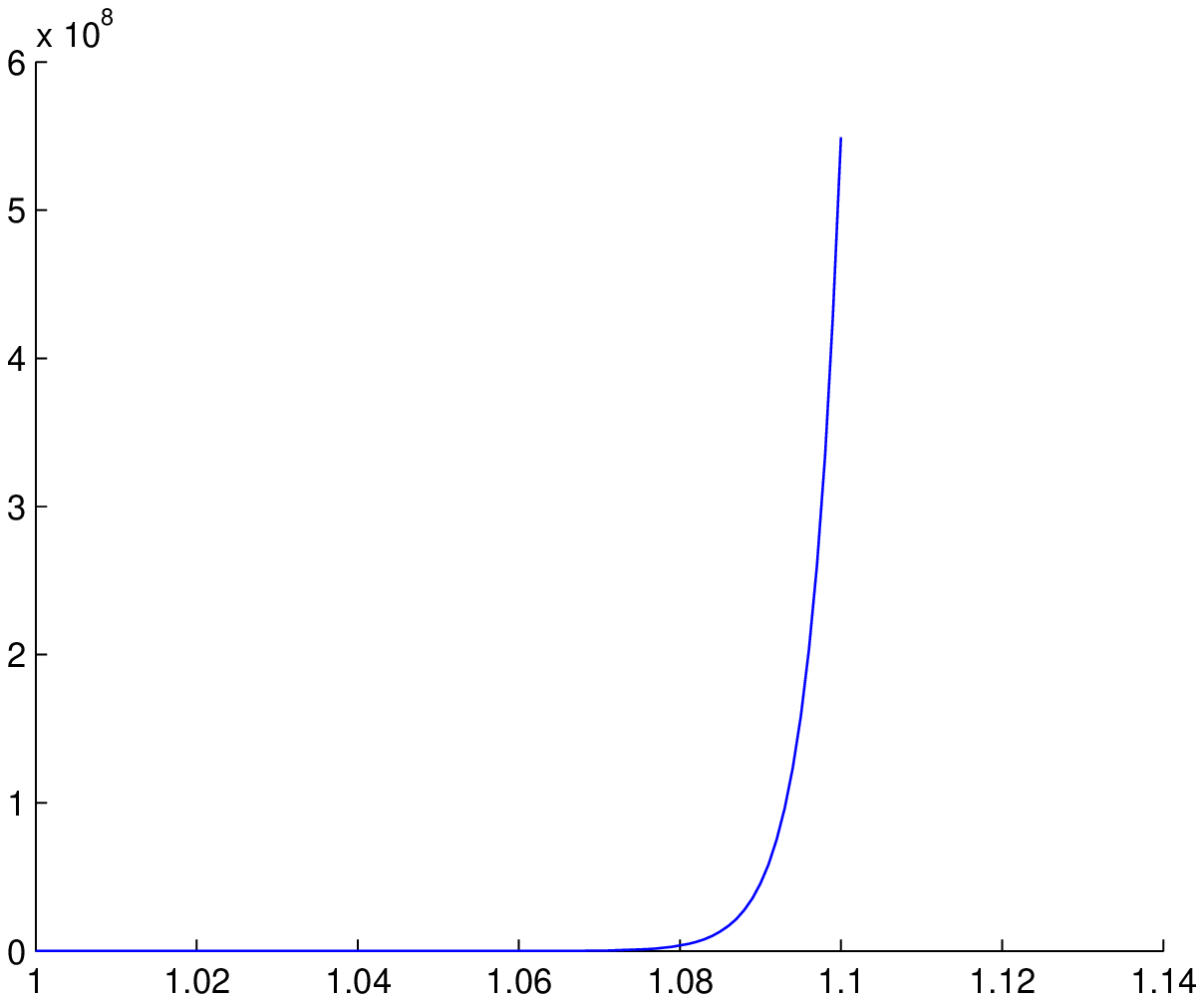}
\hspace{2mm}
\includegraphics*[clip=true,width=47mm]
{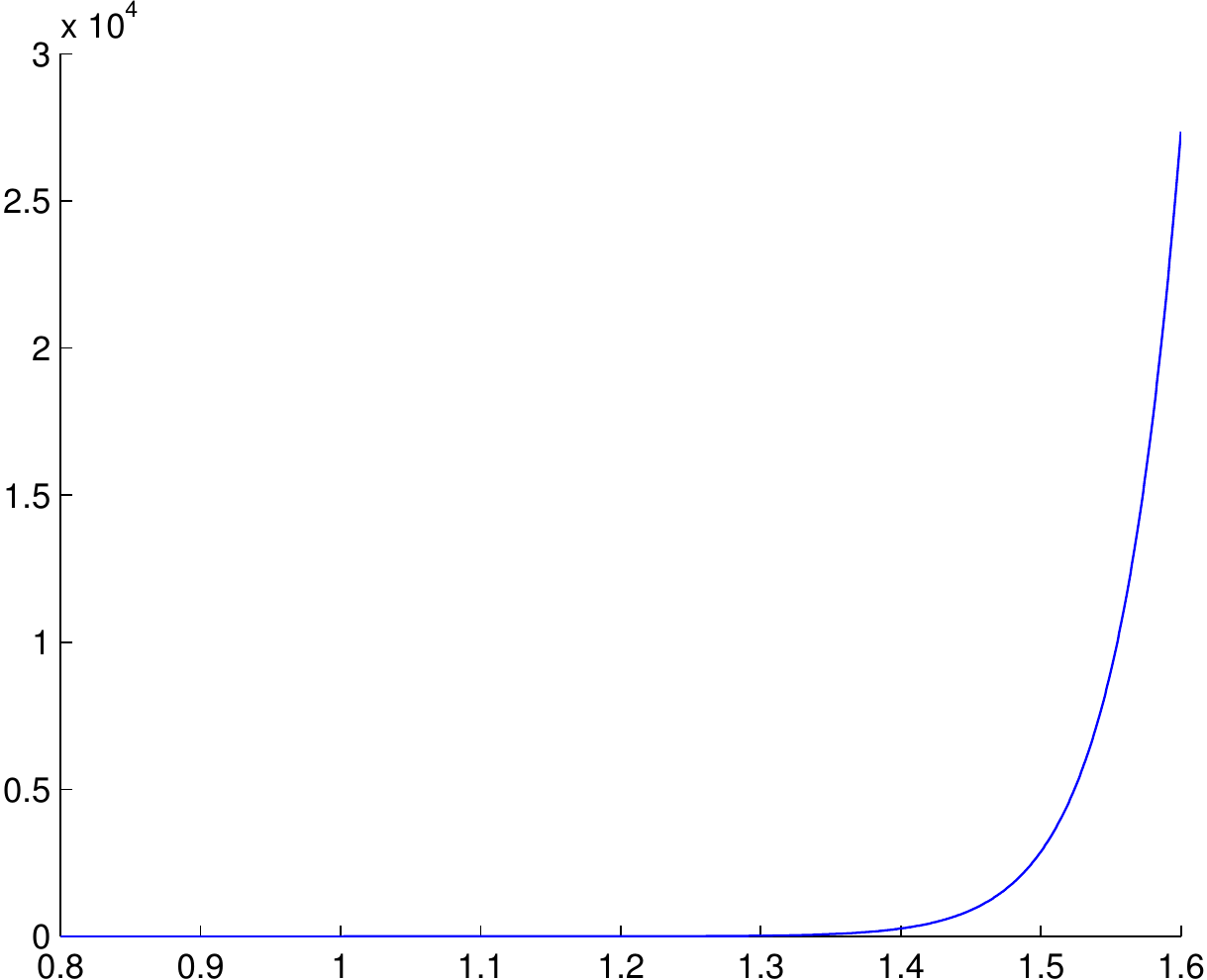}
\hspace{2mm}
\includegraphics*[clip=true,width=47mm]
{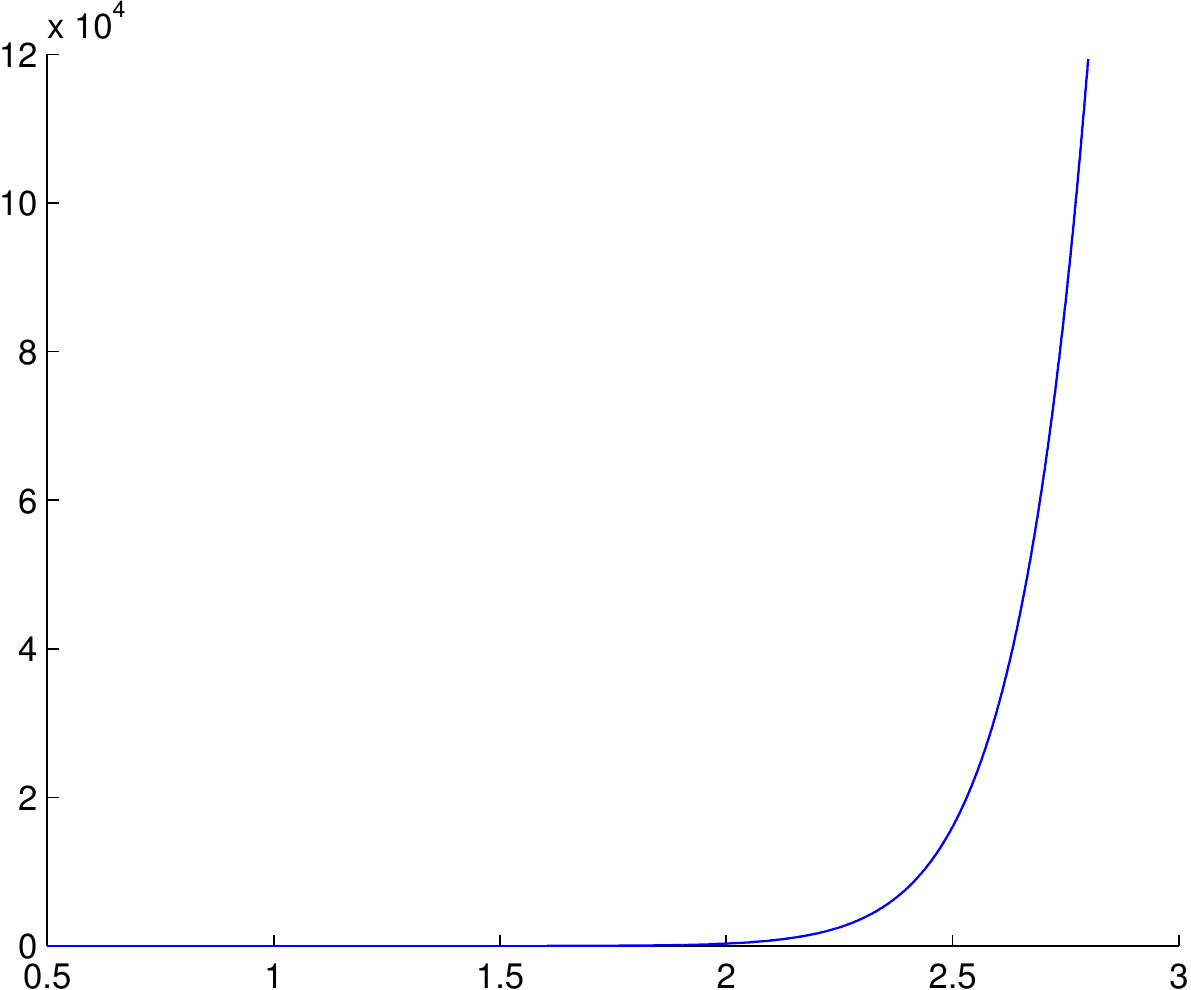}
}
 \figtext{ 
 	\writefig	-0.7	4.0	{\bf (a)}
 	\writefig	4.4	4.0	{\bf (b)}
 	\writefig	9.6	4.0	{\bf (c)}
 	\writefig	0.4	4.0	$\E(r^N)$
 	\writefig	5.5	4.0	$\E(r^N)$
 	\writefig	10.7	4.0	$\E(r^N)$
 	\writefig	4.2	0.8	$r$
 	\writefig	9.4	0.8	$r$
 	\writefig	14.6	0.8	$r$
 }
 \vspace{2mm}
\caption[]{Empirical expectation of $r^N$ as a function of $r$, for
$\tilde\sigma=0.1$, $\eps=10^{-4}$ and $\tilde\mu = 0.05$ (a), $\tilde\mu =
-0.03$ (b) and $\tilde\mu = -0.06$ (c). The respective values of
$-\tilde\mu/\tilde\sigma$ are thus $-0.5$ (a), $0.3$ (b) and $0.6$ (c). 
The location of the pole allows to estimate $1/\lambda_0$.
}
\label{fig_comparison1}
\end{figure}

Due to the finite sample size, the number of events in the tails of the
histograms is too small to allow for a chi-squared adequacy test. We can,
however, estimate the principal eigenvalue $\lambda_0$, by using the fact that
the moment generating function $\expecin{\mu_0}{r^N}$ has a simple pole at
$r=1/\lambda_0$ (see \eqref{markov103} and~\eqref{markov107}).
\figref{fig_comparison1} shows examples of the dependence of the empirical
expectation of $r^N$ on $r$. By detecting when its derivative exceeds a given
threshold, one obtains an estimate of $1/\lambda_0$. Geometric distributions
with parameter $\lambda_0$ have been superimposed on two histograms
in~\figref{fig_histograms}. 

\figref{fig_comparison2} shows, as a function of $-x=-\tilde\mu/\tilde\sigma$,
the curve $x\mapsto \Phi(-\pi^{1/4}x)$, as well as the inverse of the empirical
expectation of $N$, the probability that $N=1$, and $1-\lambda_0$ where the
principal eigenvalue $\lambda_0$ has been estimated from the generating
function. The data points for $\tilde\mu>0$ have been obtained from histograms
containing $1000$ spikes, while those for $\tilde\mu<0$ have been obtained from
histograms containing $500$ spikes separated by $N>1$ SAOs (the number of
spiking events with $N=1$ being much larger). Theorem~\ref{thm_weak} 
applies to the far left of the figure, when $\tilde\mu\gg\tilde\sigma$.

\begin{figure}
\centerline{\hspace{7mm}
\includegraphics*[clip=true,width=90mm]{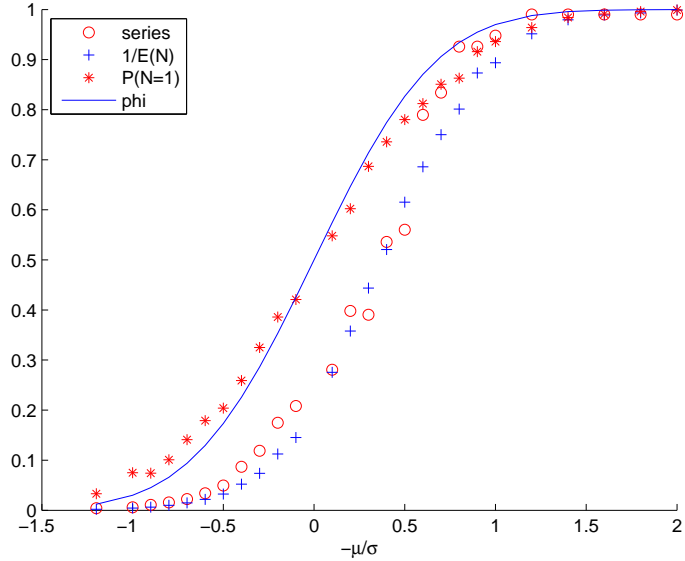}
}
 \figtext{ 
 }
 \vspace{2mm}
\caption[]{Plots of the function $\Phi(-\pi^{1/4}\tilde\mu/\tilde\sigma)$ as a
function of $-\tilde\mu/\tilde\sigma$ (full line), and numerical estimates of
$\prob{N=1}$ (stars), $1/\expec{N}$ (crosses) and $1-\lambda_0$ (circles).
}
\label{fig_comparison2}
\end{figure}

As predicted by~\eqref{trans07}, $\prob{N=1}$ is indeed close to the theoretical
value $\Phi(-\pi^{1/4}\tilde\mu/\tilde\sigma)$. Recall from~\eqref{res07} that
$1/\expecin{\mu_0}{N}$, $\probin{\mu_0}{N=1}$ and $1-\lambda_0$ would be equal
if the initial distribution $\mu_0$ after a spike were equal to the QSD $\pi_0$.
The simulations show that $1/\expec{N}$ and $1-\lambda_0$ are systematically
smaller than $\prob{N=1}$. The difference between $\prob{N=1}$ and
$1-\lambda_0$ is a measure of how far away $\mu_0$ is from the QSD $\pi_0$. The
difference between $1/\expec{N}$ and $1-\lambda_0$ also depends on the spectral
gap between $\lambda_0$ and the remaining spectrum of the Markov kernel. Note
that $1/\expec{N}$ and $1-\lambda_0$ seem to follow a similar curve as
$\prob{N=1}$, but with a shifted value of $\tilde\mu$. We do not have any 
explanation for this at the moment. 


\section{Conclusion and outlook}
\label{sec_conc}

We have shown that in the excitable regime, and when the stationary point $P$ is
a focus, the interspike interval statistics of the stochastic FitzHugh--Nagumo
equations can be characterised in terms of the random number of SAOs $N$. The
distribution of $N$ is asymptotically geometric, with parameter $1-\lambda_0$,
where $\lambda_0$ is the principal eigenvalue of a substochastic Markov chain,
describing a random Poincar\'e map. This result is in fact fairly general,
as it does not depend at all on the details of the system. It only requires the
deterministic system to admit an invariant region where the dynamics involves
(damped) oscillations, so that a Poincar\'e section can be defined in a
meaningful way. Thus Theorem~\ref{thm_geometric} will hold true for a large
class of such systems. 

To be useful for applications, this qualitative result has to be complemented by
quantitative estimates of the relevant parameters. Theorem~\ref{thm_weak}
provides such estimates for $\lambda_0$ and the expected number of SAOs in the
weak-noise regime $\sigma_1^2+\sigma_2^2\ll(\eps^{1/4}\delta)^2$. We have
obtained one-sided estimates on these quantities, which follow from the
construction of an almost invariant region $A$ for the Markov chain. It is
possible to obtain two-sided estimates by deriving more precise properties for
the Markov chain, in particular a lower bound on the probability of leaving the
complement of $A$. We expect the exponent
$(\eps^{1/4}\delta)^2/(\sigma_1^2+\sigma_2^2)$ to be sharp in the case
$\delta\gg\eps$, since this corresponds to the drift $\tilde\mu$ in the
expression~\eqref{Rxiz02} for $\dot{z}$ dominating the error terms of order
$\sqrt{\eps}$ due to higher-order nonlinear terms. For smaller $\delta$,
however, there is a competition between the two terms, the effect of which is
not clear and has to be investigated in more detail. The same problem prevents
us from deriving any bounds for $\delta\leqs\eps$ when the parameter $c$
defining the FitzHugh--Nagumo equations is different from zero. It may be
possible to achieve a better control on the nonlinear terms by additional
changes of variables.  

For intermediate and strong noise, we obtained an approximation~\eqref{trans07}
for the probability $\prob{N=1}$ of spiking immediately, showing that the
transition from rare to frequent spikes is governed by the distribution function
$\Phi$ of the normal law. Though we didn't obtain rigorous bounds on the
principal eigenvalue and expected number of SAOs in this regime, simulations
show a fairly good agreement with the approximation for $\prob{N=1}$. The
results on the Markov kernel contained in the appendix should in fact yield more
precise information on $\lambda_0$ and the law of $N$, via approximations for
the quasistationary distribution $\pi_0$. Generally speaking, however, we need
better tools to approximate QSDs, principal eigenvalues and the spectral gap of
substochastic Markov chains.

Finally, let us note that the approach presented here should be applicable to
other excitable systems involving oscillations. For instance, for some parameter
values, the Morris--Lecar equations~\cite{MorrisLecar81} admit a stable
stationary point surrounded by an unstable and a stable periodic orbit. In a
recent work~\cite{Ditlevsen_Greenwood_11}, Ditlevsen and Greenwood have combined
this fact and results on linear oscillatory systems with
noise~\cite{Baxendale_Greenwood_11} to relate the
spike statistics to those of an integrate-and-fire model. It would be
interesting to implement the Markov-chain approach in this situation as well.


\appendix


\section{Dynamics near the separatrix}
\label{sec_separatrix}


The appendix contains some of the more technical computations required for the
proof of Theorem~\ref{thm_weak}. We treat separately the dynamics near the
separatrix, and during the remainder of an SAO. 

In this section, we use the equations in $(\xi,z)$-variables given
by~\eqref{Rxiz02} to describe the dynamics in a neighbourhood of the separatrix.
To be more specific, we will assume that $z$ is small, of the order of some
power of $\mu$, and that $\xi$ varies in an interval $[-L,L]$, where the
parameter $L$ is given by~\eqref{bounds12}. Let $F_\pm$ be the two broken lines
defined in~\eqref{bounds10} and~\eqref{bounds11}. Given an initial condition
$(-L,z_0)\in F_-$, our goal is to estimate where the sample path starting in
$(-L,z_0)$ hits $F_+$ for the first time. This will characterise the first part
of the Markov kernel $K$. 


\subsection{The linearised process}
\label{ssec_ptrans} 


Before analysing the full dynamics of~\eqref{Rxiz02} we consider some
approximations of the system. The fact that $\xi_t\simeq\xi_0+t/2$ for small
$z$ motivates the change of variable 
\begin{equation}
\label{preuve_ecart01}
\xi = \dfrac{t}{2} + u\;,
\end{equation}
which transforms the system \eqref{Rxiz02} into
\begin{equation}
\label{preuve_ecart02}
\begin{split}
\6u_t &= \bigpar{ - z_t +\Order{\tilde\eps} }
\6t + \tilde{\sigma}_1 \6 W_t^{(1)}\;, \\
\6z_t &= \bigpar{\tilde{ \mu}  + t  z_t + 2u_t  z_t + \Order{\tilde\eps}
} \6t -  \tilde{\sigma}_1 t \6W_t^{(1)} -  2\tilde{\sigma}_1 
u_t \6W_t^{(1)}+ \tilde{\sigma}_2  \6W_t^{(2)}\;,
\end{split}
\end{equation}
where we write $\tilde\eps = \sqrt{\eps} (L^4+cL^2)$. We choose an initial
condition $(0,z_0)$ at time $t_0=-2L$. 
As a first approximation, consider the deterministic system 
\begin{equation}
\label{preuve_ecart03}
\begin{split}
\6u^0_t &= -z^0_t \6t\;,  \\
\6z^0_t &= \bigpar{\tilde{\mu} + t z^0_t} \6t\;.
\end{split}
\end{equation}
The solution of the second equation is given by 
\begin{equation}
 \label{preuve_ecart03A}
z^0_t = \e^{t^2/2} \biggbrak{z_0 \e^{-t_0^2/2} + \tilde\mu \int_{t_0}^t
\e^{-s^2/2}\6s}\;. 
\end{equation} 
In particular, at time $T=2L$, we have $\xi_T=L+u_t\simeq L$ and the location of
the first-hitting point of $F_+$ is approximated by  
\begin{equation}
 \label{preuve_ecart03A:1}
z^0_T = z_0 + \tilde\mu \e^{T^2/2}  \int_{t_0}^T
\e^{-s^2/2}\6s = z_0 + \Order{\tilde\mu^{1-\gamma}}\;.
\end{equation} 
As a second approximation, we incorporate the noise terms and consider the
linear SDE 
\begin{equation}
\label{preuve_ecart03B}
\begin{split}
\6u^1_t &= -z^1_t \6t + \tilde{\sigma}_1 \6W_t^{(1)}\;, \\
\6z^1_t &= \bigpar{ \tilde{\mu} + t z^1_t } \6t - \tilde{\sigma}_1 t
\6W_t^{(1)} + \tilde{\sigma}_2 \6W_t^{(2)}\;.
\end{split}
\end{equation}
Let us now quantify the deviation between $(u^1_t,z^1_t)$ and $(u^0_t,z^0_t)$.

\begin{prop}
\label{prop_xiz_linear}
Let 
\begin{equation}
\label{prop_ecart03}
\zeta(s) = \e^{s^2} \biggbrak{ \e^{-t_0^2} + \int_{t_0}^s \e^{-u^2} \6u}\;. 
\end{equation}
Then there exists a constant $M>0$ such that for all $t\geqs t_0$, 
all $h,h_1,h_2>0$ and all $\rho\in(0,\tilde\mu^{2\gamma}/M)$, 
\begin{equation}
\label{preuve_ecart18}
\biggprob{ \sup_{t_0 \leqs s \leqs t} \dfrac{|z^1_s-z^0_s|}{\sqrt{\zeta(s)}}
\geqs h
} \leqs \dfrac{2(t-t_0)}{\rho} \exp \biggset{ -\dfrac{1}{8}
\dfrac{h^2}{\tilde{\sigma}^2}
\bigpar{ 1-M \rho\, \tilde\mu^{-2\gamma} } }
\end{equation} 
and 
\begin{multline}
 \label{preuve_ecart21}
\qquad
\biggprob{ \sup_{t_0 \leqs s \leqs t} |u^1_s-u^0_s | \geqs h_1 + 
h_2 \int_{t_0}^t\sqrt{\zeta(s)}\6s} \\
\leqs 2\exp \biggset{ {-\dfrac{h_1^2}{2(t-t_0)
\tilde{\sigma}_1^2}} } 
+ \dfrac{2(t-t_0)}{\rho} \exp \biggset{ -\dfrac{1}{8}
\dfrac{h_2^2}{\tilde{\sigma}^2}
\bigpar{ 1-M \rho\, \tilde\mu^{-2\gamma} } }\;.
\qquad
\end{multline}
\end{prop}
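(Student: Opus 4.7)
The plan is to express $\Delta z_t := z^1_t - z^0_t$ as a time-changed Gaussian martingale and then control its supremum by a discretisation (slicing) argument combined with an exponential martingale inequality. Subtracting \eqref{preuve_ecart03} from \eqref{preuve_ecart03B} yields the linear SDE $\6\Delta z_t = t\,\Delta z_t\,\6t - \tilde\sigma_1 t\,\6W^{(1)}_t + \tilde\sigma_2\,\6W^{(2)}_t$ with $\Delta z_{t_0}=0$. Variation of constants gives $\Delta z_t = \e^{t^2/2}M_t$, where $M_t = \int_{t_0}^t \e^{-s^2/2}\bigl[-\tilde\sigma_1 s\,\6W^{(1)}_s + \tilde\sigma_2\,\6W^{(2)}_s\bigr]$ is a continuous martingale with deterministic quadratic variation $\langle M\rangle_s = \int_{t_0}^s \e^{-u^2}(\tilde\sigma_1^2 u^2+\tilde\sigma_2^2)\,\6u$. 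Setting $\psi(s):=\e^{-t_0^2}+\int_{t_0}^s \e^{-u^2}\6u$, so that $\zeta(s)=\e^{s^2}\psi(s)$, the estimate \eqref{preuve_ecart18} is equivalent to a deviation bound on $\sup_{s\leqs t}|M_s|/\sqrt{\psi(s)}$.

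For this, partition $[t_0,t]$ into subintervals $[t_k,t_{k+1}]$ of length at most $\rho$. Since $\psi$ is nondecreasing, $\psi(s)\geqs \psi(t_k)$ on the $k$-th piece, so the target event is covered by the union over $k$ of $\{\sup_{t_k\leqs s\leqs t_{k+1}}|M_s|\geqs h\sqrt{\psi(t_k)}\}$. Doob's maximal inequality applied to the exponential supermartingale $\exp(\lambda M_s-\lambda^2\langle M\rangle_s/2)$, optimised in $\lambda$, bounds each term by $2\exp\bigl(-h^2\psi(t_k)/(2\langle M\rangle_{t_{k+1}})\bigr)$. An integration by parts gives $\int_{t_0}^s u^2 \e^{-u^2}\6u \leqs \tfrac12\int_{t_0}^s \e^{-u^2}\6u + C\e^{-t_0^2}$, whence $\langle M\rangle_{t_{k+1}}\leqs C\tilde\sigma^2\psi(t_{k+1})$ with no spurious $L^2$ factor. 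Using $\psi(t_{k+1})-\psi(t_k)\leqs \rho$ and the lower bound $\psi(t_k)\geqs \e^{-t_0^2}=(c_-\tilde\mu)^{2\gamma}$ from the choice \eqref{bounds12} of $L$, one obtains $\psi(t_{k+1})/\psi(t_k)\leqs 1+M\rho\,\tilde\mu^{-2\gamma}$, so that the exponent becomes $-h^2(1-M\rho\,\tilde\mu^{-2\gamma})/(8\tilde\sigma^2)$ after absorbing numerical constants into the factor $1/8$. A union bound over the at most $(t-t_0)/\rho$ subintervals produces \eqref{preuve_ecart18}.

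For the second estimate, subtracting the $u$-equations yields $\Delta u_t = \tilde\sigma_1(W^{(1)}_t-W^{(1)}_{t_0})-\int_{t_0}^t \Delta z_s\,\6s$, and hence $\sup_{s\leqs t}|\Delta u_s|$ is bounded by $\tilde\sigma_1\sup_{s\leqs t}|W^{(1)}_s-W^{(1)}_{t_0}|$ plus $\bigl(\sup_{s\leqs t}|\Delta z_s|/\sqrt{\zeta(s)}\bigr)\int_{t_0}^t\sqrt{\zeta(s)}\,\6s$. The reflection principle gives $\fP\bigl(\tilde\sigma_1\sup|W^{(1)}_s-W^{(1)}_{t_0}|\geqs h_1\bigr)\leqs 2\exp\bigl(-h_1^2/(2(t-t_0)\tilde\sigma_1^2)\bigr)$, which accounts for the first term in \eqref{preuve_ecart21}, while the second supremum is controlled by \eqref{preuve_ecart18} applied with $h=h_2$. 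A union bound on these two events yields \eqref{preuve_ecart21}.

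The main obstacle is the slicing in Step 2: one must choose $\rho$ small enough that $\psi$ varies negligibly on each subinterval (so that $\rho\,\tilde\mu^{-2\gamma}$ remains small) yet large enough that the number $(t-t_0)/\rho$ of pieces entering the union bound does not kill the exponential decay. The factor $(1-M\rho\,\tilde\mu^{-2\gamma})$ is precisely the toll paid for this discretisation, benign under the standing hypothesis $\rho<\tilde\mu^{2\gamma}/M$. A secondary delicacy is producing the inequality $\langle M\rangle\lesssim\tilde\sigma^2\psi$ without a factor $L^2$; this requires the integration-by-parts identity for $\int u^2\e^{-u^2}\6u$ rather than the naive bound $u^2\leqs 4L^2$, which would otherwise inflate the exponent by a logarithmic factor in $\tilde\mu$.
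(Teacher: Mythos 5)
Your overall architecture is the same as the paper's: variation of constants for $z^1-z^0$, slicing of $[t_0,t]$ into length-$\rho$ pieces, an exponential-martingale bound on each slice, the ratio $\psi(t_k)/\psi(t_{k+1})\geqs 1-M\rho\,\tilde\mu^{-2\gamma}$ coming from $\psi(t_{k+1})-\psi(t_k)\leqs\rho$ and $\psi\geqs\e^{-t_0^2}$, a union bound over the $\sim(t-t_0)/\rho$ slices, and for the second estimate the decomposition $u^1_t-u^0_t=\tilde\sigma_1(W^{(1)}_t-W^{(1)}_{t_0})-\int_{t_0}^t(z^1_s-z^0_s)\,\6s$ (the paper packages the per-slice step as the Bernstein-type Lemma~3.2 of \cite{BG1}, and treats the two noise integrals $y^{1,1}$, $y^{1,2}$ separately with thresholds $H_0=H_1=h/2$, which is where the $1/8$ comes from). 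The genuine gap is exactly the step you single out as the \lq\lq secondary delicacy\rq\rq. The integration by parts gives the identity $\int_{t_0}^s u^2\e^{-u^2}\6u=\tfrac12\int_{t_0}^s\e^{-u^2}\6u+\tfrac12\bigl(t_0\e^{-t_0^2}-s\e^{-s^2}\bigr)$; the term $\tfrac12 t_0\e^{-t_0^2}$ is negative and can be dropped, but the remaining boundary term is \emph{not} $\Order{\e^{-t_0^2}}$: for $s\in(t_0,0)$ it equals $\tfrac12\abs{s}\e^{-s^2}$, whereas $\psi(s)=\e^{-t_0^2}+\int_{t_0}^s\e^{-u^2}\6u$ is only of order $\e^{-s^2}/(2\abs{s})$ there (e.g.\ at $s=-L$), so the boundary term exceeds $\psi(s)$ by a factor of order $s^2$, up to $4L^2=2\gamma\abs{\log(c_-\tilde\mu)}$. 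Hence your key inequality $\langle M\rangle_{t_{k+1}}\leqs C\tilde\sigma^2\psi(t_{k+1})$ is false whenever $\tilde\sigma_1$ is comparable to $\tilde\sigma$; the correct statement carries a factor $L^2$, and your per-slice exponent degrades to $-\kappa h^2/(\tilde\sigma^2\abs{\log\tilde\mu})$ rather than $-h^2(1-M\rho\,\tilde\mu^{-2\gamma})/(8\tilde\sigma^2)$. The loss is confined to the $\tilde\sigma_1$-component (integrand $u\e^{-u^2/2}$); your handling of the $\tilde\sigma_2$-component and of $u^1-u^0$ is correct and coincides with the paper's.

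To be fair, you have put your finger on a real delicacy: the paper's own proof carries out the slice computation only for $y^{1,1}$ (where the quadratic variation is exactly $\int\e^{-u^2}\6u\leqs\psi$) and disposes of $y^{1,2}$ with \lq\lq doing the same\rq\rq, which is terse on precisely this $u^2$-weight. But your proposed repair does not close it. Two honest options: either build the weight into the comparison function (replace $\psi(s)$ by $\e^{-t_0^2}+\int_{t_0}^s(1+u^2)\e^{-u^2}\6u$ in the definition of $\zeta$, which changes the statement of the proposition but not its later use), or accept the factor $L^2\asymp\abs{\log\tilde\mu}$ in the exponent, which is harmless downstream since the hypothesis of Theorem~\ref{thm_weak} already concedes a factor $\log(\sqrt{\eps}/\delta)$ in the noise condition; merging the two noises into one martingale and invoking integration by parts, as written, proves only the weaker bound.
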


\begin{proof}
The difference $(x^1,y^1)=(u^1-u^0,z^1-z^0)$ satisfies the system 
\begin{equation}
\label{preuve_ecart06}
\begin{split}
\6x_t^1 &= -y_t^1 \6t + \tilde{\sigma}_1 \6W_t^{(1)}\;, \\
\6y_t^1 &= t\, y_t^1 \6t - \tilde{\sigma}_1 t \6W_t^{(1)} + 
\tilde{\sigma}_2 \6W_t^{(2)} \;.
\end{split}
\end{equation}
The second equation admits the solution 
\begin{equation}
\label{preuve_ecart07}
y_t^1 =  \tilde{\sigma}_2 \e^{t^2/2} \int_{t_0}^t \e^{-s^2/2}
\6W_s^{(2)} - \tilde{\sigma}_1 \e^{t^2/2} \int_{t_0}^t s
e^{-s^2/2} \6W_s^{(1)} 
=: y_t^{1,1} + y_t^{1,2}\;.
\end{equation}
We first estimate $y_t^{1,1}$. Let $u_0=t_0<u_1<\dots<u_K=t$ be a partition of
$[t_0,t]$. The Bernstein-like estimate \cite[Lemma~3.2]{BG1} yields the bound 
\begin{equation}
\label{preuve_ecart09}
\biggprob{ \sup_{t_0 \leqs s \leqs t} \dfrac{1}{\sqrt{\zeta(s)}}
\tilde{\sigma}_2 \biggabs{\int_{t_0}^s
\e^{( s^2 - u^2)/2} \6W_u} \geqs H_0} \leqs 2
\sum_{k=1}^K
P_k
\end{equation}
for any $H_0>0$, 
where 
\begin{equation}
\label{preuve_ecart11}
P_k \leqs \exp \biggset{ -\dfrac{1}{2} \dfrac{H_0^2}{\tilde{\sigma}_2^2}
\inf_{u_{k-1}
\leqs s \leqs u_k} \dfrac{\zeta(s)}{\zeta(u_k)} \e^{u_k^2-s^2} }\;.
\end{equation}
The definition of $\zeta(s)$ implies 
\begin{equation}
\label{preuve_ecart12}
\dfrac{\zeta(s)}{\zeta(u_k)} \e^{u_k^2-s^2} = 1 - \dfrac{1}{\zeta(u_k)}
\int_s^{u_k} \e^{u_k^2-u^2} \6u
\geqs 1 - \int_s^{u_k} \e^{t_0^2-u^2}\6u\;.
\end{equation}
Note that $\e^{t_0^2}=\e^{4L^2}=\Order{\tilde\mu^{-2\gamma}}$. 
For a uniform partition given by $u_{k}-u_{k-1} = \rho$ with $\rho \ll
\tilde\mu^{2\gamma}$, we can bound this last expression below by 
\begin{equation}
\label{preuve_ecart13}
1 - M \rho\, \tilde\mu^{-2\gamma}
\end{equation}
for some constant $M$. This yields 
\begin{equation}
\label{preuve_ecart14}
\biggprob{ \sup_{0 \leqs s \leqs t} \dfrac{|y_s^{1,1}|}{\sqrt{\zeta(s)}} \geqs
H_0
} \leqs \dfrac{2(t-t_0)}{\rho} \exp \biggset{ -\dfrac{1}{2}
\dfrac{H_0^2}{\tilde{\sigma}_2^2}
\bigpar{ 1-M \rho\, \tilde\mu^{-2\gamma} } }\;.
\end{equation}
Doing the same for $y_s^{1,2}$ we obtain 
\begin{equation}
\label{preuve_ecart15}
\biggprob{ \sup_{0 \leqs s \leqs t} \dfrac{|y_s^{1,2}|}{\sqrt{\zeta(s)}} \geqs
H_1
} \leqs \dfrac{2(t-t_0)}{\rho} \exp \biggset{ -\dfrac{1}{2}
\dfrac{H_1^2}{\tilde{\sigma}_1^2}
\bigpar{ 1-M \rho\, \tilde\mu^{-2\gamma} } }
\end{equation}
for any $H_1>0$. 
Letting $h=H_0 + H_1$ with $H_0 = H_1=h/2$, we obtain~\eqref{preuve_ecart18}. 
Now we can express $x_t^1$ in terms of $y_t^1$ by
\begin{equation}
\label{preuve_ecart19}
x_t^1 = -\int_{t_0}^t y_s^1 \6s + \tilde{\sigma_1} \int_{t_0}^t  \6W_s^{(1)}\;.
\end{equation}
Then the Bernstein inequality 
\begin{equation}
\label{preuve_ecart20}
\biggprob{ \sup_{0 \leqs s \leqs t} \biggabs{\tilde{\sigma_1} \int_{t_0}^t 
\6W_s^{(1)}}
\geqs h_1  }  \leqs
2\exp\biggset{-\frac{h_1^2}{2(t-t_0) \tilde{\sigma}_1^2}}
\end{equation}
yields~\eqref{preuve_ecart21}. 
\end{proof}


\subsection{The nonlinear equation}
\label{ssec_xiz_nonlin} 


We now turn to the analysis of the full system~\eqref{Rxiz02}, or,
equivalently, 
\eqref{preuve_ecart02}. Before that, we state a generalised Bernstein inequality
that we will need several times in the sequel. 
Let $W_t$ be an $n$-dimensional standard Brownian motion, and 
consider the martingale 
\begin{equation}
 \label{eq:teclem01}
M_t = \int_{t_0}^t g(X_s,s) \,\6W_s
= \sum_{i=1}^{n}  \int_{t_0}^t g_{i}(X_t,t) \,\6W^{(i)}_t \;,
\end{equation}  
where $g=(g_{1},\dots,g_{n})$ takes values in $\R^{n}$ and the process $X_t$ is
assumed to be adapted to the filtration generated by $W_t$. Then we have the
following result (for the proof, see~\cite[Lemma D.8]{BGK12}):

\begin{lemma}
\label{lem_Bernstein}
Assume that the integrand satisfies 
\begin{equation}
 \label{eq:teclem02}
g(X_t,t) g(X_t,t)^T \leqs G(t)^2
\end{equation} 
almost surely, for a deterministic function $G(t)$, and that the integral 
\begin{equation}
 \label{eq:teclem03}
V(t) = \int_{t_0}^t G(s)^2\,\6s
\end{equation} 
is finite. Then 
\begin{equation}
 \label{eq:teclem04}
\biggprob{ \sup_{{t_0}\leqs s\leqs t} M_s > x}
\leqs \e^{-x^2/2V(t)}
\end{equation} 
for any $x>0$. 
\end{lemma}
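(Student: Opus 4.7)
The plan is to use the classical exponential (Bernstein--type) martingale argument, adapted to the setting where only the pathwise bound $gg^T\leqs G(t)^2$ on the integrand is available, so that the bracket process can be dominated by the deterministic function $V(t)$.

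First I would introduce, for $\lambda>0$, the exponential process
\begin{equation*}
Z^\lambda_t = \exp\biggset{\lambda M_t - \frac{\lambda^2}{2} \langle M\rangle_t}\;,
\qquad
\langle M\rangle_t = \int_{t_0}^t g(X_s,s) g(X_s,s)^{\math T}\6s\;.
\end{equation*}
Since $g\,g^{\math T}\leqs G(s)^2$ almost surely by hypothesis, the bracket satisfies $\langle M\rangle_s\leqs V(s)\leqs V(t)$ for every $s\in[t_0,t]$, almost surely. The process $Z^\lambda$ is a nonnegative local martingale (It\^o's formula gives $\6Z^\lambda_t = \lambda Z^\lambda_t g(X_t,t)\6W_t$), hence a supermartingale with $\E\{Z^\lambda_{t_0}\}=1$.

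Next I would compare $M$ and $Z^\lambda$ pathwise. From $\langle M\rangle_s\leqs V(t)$ we obtain, for every $s\in[t_0,t]$,
\begin{equation*}
Z^\lambda_s \geqs \exp\biggset{\lambda M_s - \frac{\lambda^2}{2} V(t)}\;,
\end{equation*}
so that the event $\set{M_s>x}$ is contained in $\set{Z^\lambda_s > \exp(\lambda x-\lambda^2 V(t)/2)}$. Taking the supremum over $s$ and applying Doob's maximal inequality for the nonnegative supermartingale $Z^\lambda$ yields
\begin{equation*}
\biggprob{\sup_{t_0\leqs s\leqs t}M_s>x}
\leqs \exp\biggset{-\lambda x + \frac{\lambda^2}{2}V(t)}\;.
\end{equation*}
Optimising the right-hand side in $\lambda>0$ by choosing $\lambda=x/V(t)$ gives the announced bound $\e^{-x^2/2V(t)}$.

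The only delicate point is justifying that $Z^\lambda$ is a genuine supermartingale rather than merely a local martingale, so that Doob's maximal inequality is applicable with initial value $1$. For a nonnegative local martingale this is automatic by Fatou's lemma, so no Novikov-type condition is actually needed here, which is precisely why the bound extends to any adapted integrand satisfying the deterministic ceiling $G(t)^2$. This is the step where one must be slightly careful, since the integrand depends on $X_t$ and nothing has been assumed about the explicit law of $X_t$; the pathwise domination $gg^{\math T}\leqs G(t)^2$ and the nonnegativity of $Z^\lambda$ together suffice to bypass any integrability hypothesis on the exponential.
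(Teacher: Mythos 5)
Your argument is correct: the exponential local martingale $Z^\lambda_t=\exp\{\lambda M_t-\tfrac{\lambda^2}{2}\langle M\rangle_t\}$, the pathwise domination $\langle M\rangle_s\le V(t)$, Doob's maximal inequality for the nonnegative supermartingale (justified via Fatou, with no Novikov condition needed), and optimisation at $\lambda=x/V(t)$ deliver exactly the stated bound. The paper itself gives no proof but refers to Lemma~D.8 of the cited reference, whose argument is this same standard exponential-martingale computation, so your proposal coincides with the intended proof.
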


\begin{prop}
Assume $z_0=\Order{\tilde\mu^{1-\gamma}}$. There exist constants $C,\kappa,M>0$
such that for $t_0 \leqs t \leqs T + \Order{\abs{\log\tilde\mu}^{-1/2}}$, all 
$\tilde\sigma\leqs\tilde\mu$ and $H>0$, 
\begin{equation}
\biggprob{ \sup_{t_0 \leqs s \leqs t} \dfrac{|z_s - z_s^0|}{\sqrt{\zeta(s)}}
\geqs H } \leqs \frac{CT}{\tilde\mu^{2\gamma}} \biggpar{
\exp \biggset{ - \kappa 
\frac{\bigbrak{H-M(T^2\tilde\mu^{2-4\gamma} +
T\tilde\eps\tilde\mu^{-2\gamma})}^2}{\tilde\sigma^2} } 
+  \e^{-\kappa \tilde\mu^2/\tilde\sigma^2} }
\label{prop_ecart04}
\end{equation}
and for all $H'>0$, 
\begin{equation}
\biggprob{ \sup_{t_0 \leqs s \leqs t} |u_s - u_s^0|
\geqs H' } \leqs \frac{CT}{\tilde\mu^{2\gamma}} \biggpar{
\exp \biggset{ - \kappa 
\frac{\bigbrak{H'-M(T^2\tilde\mu^{2-4\gamma} +
T\tilde\eps\tilde\mu^{-2\gamma})}^2}{\tilde\sigma^2\tilde\mu^{-2\gamma}} } 
+ \e^{-\kappa \tilde\mu^2/\tilde\sigma^2}}\;.
\label{prop_ecart06}
\end{equation}
\end{prop}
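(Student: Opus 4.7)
The plan is to compare the full nonlinear process $(u_t,z_t)$ with the linearised process $(u^1_t,z^1_t)$ of \eqref{preuve_ecart03B}, then combine with Proposition~\ref{prop_xiz_linear} via the triangle inequality. Writing $y_t = z_t - z^1_t$ and $x_t = u_t - u^1_t$, subtracting \eqref{preuve_ecart03B} from \eqref{preuve_ecart02} gives
\begin{equation}
\label{plan1}
\begin{split}
\6y_t &= \bigbrak{ty_t + 2 u_t z_t + \Order{\tilde\eps}}\6t - 2\tilde\sigma_1 u_t \6W_t^{(1)}\;, \\
\6x_t &= \bigbrak{-y_t + \Order{\tilde\eps}}\6t\;.
\end{split}
\end{equation}
Variation of constants on the linear $ty_t$ drift yields
\begin{equation}
\label{plan2}
y_t = \e^{t^2/2}\int_{t_0}^t \e^{-s^2/2}\bigbrak{2 u_s z_s + \Order{\tilde\eps}}\6s
-2\tilde\sigma_1 \e^{t^2/2}\int_{t_0}^t \e^{-s^2/2} u_s \6W_s^{(1)}\;,
\end{equation}
and $x_t$ is the time integral of $-y_s + \Order{\tilde\eps}$.

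Next, I would introduce the stopping time
\begin{equation}
\label{plan3}
\tau = \inf\biggsetsuch{s\geqs t_0}{\frac{|z_s - z^0_s|}{\sqrt{\zeta(s)}} \geqs H \text{ or } |u_s - u^0_s|\geqs H'}\;,
\end{equation}
and estimate $\prob{\tau\leqs t}$. Up to $\tau$, Proposition~\ref{prop_xiz_linear} controls $z^1_s - z^0_s$ and $u^1_s - u^0_s$ with probability at least $1-\Order{(T/\tilde\mu^{2\gamma})\e^{-\kappa \tilde\mu^2/\tilde\sigma^2}}$ (taking $h,h_2\asymp\tilde\mu$, so that the Gaussian tail in \eqref{preuve_ecart18}--\eqref{preuve_ecart21} is of the stated form). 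On this good event and on $[t_0,\tau]$, one has the a priori bounds $|z_s|\leqs |z^0_s| + \Order{\tilde\mu\sqrt{\zeta(s)}} = \Order{\tilde\mu^{1-\gamma}}$ (using \eqref{preuve_ecart03A:1} and the choice \eqref{bounds12} of $L$, which gives $\zeta(s)\leqs C\tilde\mu^{-2\gamma}$), and $|u_s|\leqs |u^0_s|+ H' \leqs M'$ for a deterministic constant depending on $L$, $H'$ and $T$.

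The deterministic integral in \eqref{plan2} is therefore bounded in sup-norm by $M(T^2\tilde\mu^{2-4\gamma} + T\tilde\eps\tilde\mu^{-2\gamma})\sqrt{\zeta(t)}$ for some $M>0$, where the first term collects the contribution of $u_s z_s$ (with $|u_s z_s|\lesssim \tilde\mu^{1-2\gamma}\cdot T$ times the $\e^{(t^2-s^2)/2}$ factor integrated to $\Order{\sqrt{\zeta(t)}}$), and the second comes from the $\Order{\tilde\eps}$ drift. For the martingale in \eqref{plan2}, the integrand $\e^{-s^2/2} u_s$ is bounded (deterministically) on $\{s\leqs\tau\}$ by $M'\e^{-s^2/2}$; Lemma~\ref{lem_Bernstein} applied to this stopped martingale, with $V(t)\leqs C\tilde\sigma_1^2 M'^2$, yields a Gaussian tail of the form $\e^{-\kappa h^2/\tilde\sigma^2}$ after dividing by $\sqrt{\zeta(t)}\asymp\e^{t^2/2}$. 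Combining these estimates with Proposition~\ref{prop_xiz_linear} via a union bound, partitioning $[t_0,T]$ into $\Order{T/\tilde\mu^{2\gamma}}$ subintervals exactly as in the proof of that proposition so that ratios $\zeta(s)/\zeta(u_k)\e^{u_k^2-s^2}$ stay close to one, produces \eqref{prop_ecart04}. The bound \eqref{prop_ecart06} on $|u_t - u^0_t|$ follows from $x_t = -\int_{t_0}^t y_s\6s + \Order{T\tilde\eps}$, using the already-established bound on $|y_s|$ integrated over $[t_0,t]$ and noting that $\int \sqrt{\zeta(s)}\6s$ contributes the factor $\tilde\mu^{-\gamma}$ in the denominator of the exponent.

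The main obstacle will be the bilinear drift $2 u_t z_t$: it couples $u$ and $z$, and a priori one has no deterministic bound on it, so a self-consistent stopping-time argument is needed to turn it into a controllable perturbation. The same stopping time simultaneously tames the random integrand $u_s$ in the It\^o integral of \eqref{plan2}, allowing Lemma~\ref{lem_Bernstein} to apply with a deterministic envelope $G(s)^2 = 4\tilde\sigma_1^2 M'^2 \e^{-s^2}$. Care is required to ensure the tail estimate from the stopped martingale controls the unstopped event $\{\tau\leqs t\}$; this is standard, since on $\{\tau\leqs t\}$ the supremum of $|y_s|/\sqrt{\zeta(s)}$ up to $\tau$ exceeds a threshold comparable to $H$ minus the deterministic corrections absorbed into $M(T^2\tilde\mu^{2-4\gamma}+T\tilde\eps\tilde\mu^{-2\gamma})$.
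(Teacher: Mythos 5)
Your overall architecture is the same as the paper's: compare $(u_t,z_t)$ with the linearised process $(u^1_t,z^1_t)$, invoke Proposition~\ref{prop_xiz_linear} with $h,h_1,h_2\asymp\tilde\mu$ for the linear part, treat the nonlinear difference separately, control the multiplicative stochastic integral by Lemma~\ref{lem_Bernstein}, and conclude by a union bound. The paper handles the nonlinear difference $(u-u^1,z-z^1)$ via a Lyapunov function $(U_t-C_0)^2=\bigl((u_t-u^1_t)^2+(z_t-z^1_t)^2\bigr)/2$ and Gronwall's lemma, whereas you use a Duhamel (variation-of-constants) representation; these are close in spirit, and your route could in principle be made to work.

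However, there is a genuine gap in how you tame the bilinear drift $2u_sz_s$ and the integrand $u_s$ of the stochastic integral. Your stopping time $\tau$ is placed at the thresholds $H$ and $H'$ of the very event being estimated, so on $[t_0,\tau]$ all you know is $|z_s-z^0_s|\leqs H\sqrt{\zeta(s)}$ and $|u_s-u^0_s|\leqs H'$; the good event from Proposition~\ref{prop_xiz_linear} only controls $z^1-z^0$ and $u^1-u^0$, not $z-z^1$. Hence your claimed a priori bound $|z_s|\leqs|z^0_s|+\Order{\tilde\mu\sqrt{\zeta(s)}}=\Order{\tilde\mu^{1-\gamma}}$ on $[t_0,\tau]$ is unsupported, and the envelopes you feed into the Duhamel integral and into Lemma~\ref{lem_Bernstein} (the constant $M'$) depend on $H$ and $H'$. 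As a consequence the constants $\kappa$ and $M$ you would obtain degrade as $H,H'$ grow (e.g.\ the contribution of $2u_sz_s$ up to $\tau$ is of order $HH'T\tilde\mu^{-\gamma}\sqrt{\zeta(s)}$, which cannot be absorbed into the $H$-independent correction $M(T^2\tilde\mu^{2-4\gamma}+T\tilde\eps\tilde\mu^{-2\gamma})$), while the proposition asserts constants uniform over all $H,H'>0$. The missing idea is an additional, self-consistent stopping time at a \emph{fixed}, $H$-independent level for the nonlinear difference itself — this is the role of the paper's $\tau^*$, defined by $U_t=1$ — together with a bootstrap (Gronwall or the integral form of your Duhamel estimate) showing that on the good events this level is never attained and that the nonlinear difference is in fact bounded by a deterministic quantity of order $T^2\tilde\mu^{2-4\gamma}+T\tilde\eps\tilde\mu^{-2\gamma}$, which is then subtracted from $H$ before applying Proposition~\ref{prop_xiz_linear}. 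With that ingredient added (and a fixed threshold of order $\tilde\mu$ for the stochastic integral, so that its failure probability is $\e^{-\kappa\tilde\mu^2/\tilde\sigma^2}$), your argument closes; as written, the key step controlling the coupling term is assumed rather than proved.
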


\begin{proof}
The upper bound on $t$ implies that $\e^{t^2/2}=\Order{\tilde\mu^{-\gamma}}$.
Thus it follows from~\eqref{preuve_ecart03} and~\eqref{preuve_ecart03A} that 
\begin{equation}
 \label{preuve_ecart_101}
z^0_s = \Order{\tilde\mu^{1-\gamma}} 
\qquad\text{and}\qquad
u^0_s = \Order{T\tilde\mu^{1-\gamma}} 
\end{equation} 
for $t_0\leqs s\leqs t$. Given $h, h_1, h_2>0$, 
we introduce the stopping times 
\begin{align}
\nonumber
\tau_1 &= \inf\Bigsetsuch{s\geqs t_0}
{\abs{z^1_s - z^0_s} \geqs h\sqrt{\zeta(s)}}\;, \\
\tau_2 &= \inf\biggsetsuch{s\geqs t_0}
{\abs{u^1_s - u^0_s} \geqs h_1 + h_2 \int_{t_0}^t \sqrt{\zeta(s)}\6s}\;.
 \label{preuve_ecart_102} 
\end{align} 
The integral of $\sqrt{\zeta(s)}$ is of order $T\tilde\mu^{-\gamma}$ at most. 
Thus choosing $h=h_1=h_2=\tilde\mu$ guarantees
that 
\begin{equation}
 \label{preuve_ecart_103}
z^1_s = \Order{\tilde\mu^{1-\gamma}} 
\qquad\text{and}\qquad
u^1_s = \Order{T\tilde\mu^{1-\gamma}} 
\end{equation} 
for $t_0\leqs s\leqs t\wedge \tau_1\wedge\tau_2$.
For these values of $h$, $h_1$ and $h_2$, Proposition~\ref{prop_xiz_linear}
implies that 
\begin{align}
\nonumber
\bigprob{\tau_1 < t} &\leqs cT\tilde\mu^{-2\gamma} \e^{-\kappa
\tilde\mu^2/\tilde\sigma^2}\;, \\
\bigprob{\tau_2 < t} &\leqs  
cT\tilde\mu^{-2\gamma} \e^{-\kappa
\tilde\mu^2/\tilde\sigma^2} 
 \label{preuve_ecart_104}
\end{align} 
for some constants $\kappa, c>0$. 
We consider the difference $(x_t^2,y_t^2)=(u_t,z_t)-(u^1_t,z^1_t)$,
which satisfies the system of SDEs
\begin{equation}
 \label{preuve_ecart23}
\begin{split}
\6x_t^2 &= \bigpar{- y_t^2 + \Order{\tilde\eps}} \6t\;, \\
\6y_t^2 &= \bigbrak{t y_t^2 + 2 (u_t^1 + x_t^2 ) ( z_t^1 +
y_t^2)+ \Order{\tilde\eps} } \6t - 2 \tilde{\sigma}_1 ( u_t^1 + x_t^2 ) 
\6W_t^{(1)} \;.
\end{split}
\end{equation}
We introduce a Lyapunov function $U_t>0$ defined by 
\begin{equation}
 \label{preuve_ecart24}
(U_t-C_0)^2 = \dfrac{\left(x_t^2\right)^2 + \left(y_t^2\right)^2 }{2}\;.
\end{equation}
The constant $C_0$ will be chosen in order to kill the second-order terms
arising from It\^o's formula. 
Let 
\begin{equation}
 \label{preuve_ecart25}
\tau^* = \inf \setsuch{ t\geqs t_0}{U_t=1}\;. 
\end{equation}
Applying It\^o's formula and choosing $C_0$ of order
$\tilde\sigma_1^2\tilde\mu^{-(1-\gamma)}$ yields 
\begin{equation}
 \label{preuve_ecart26}
\6U_t \leqs \bigbrak{C_1 + C_2(t) U_t}\6t
+ \tilde{\sigma}_1 g(t) \6W_t^1 \;,
\end{equation}
where (using the fact that $\tilde\sigma \leqs \tilde\mu$)
\begin{align}
\nonumber 
C_1 &= \bigOrder{T\tilde\mu^{2-2\gamma}} + \Order{\tilde\eps}\;, \\
C_2(t) &= t\vee0 + \Order{T\tilde\mu^{1-\gamma}}\;,
\end{align}
and $g(t)$ is at most of order $1$ 
for $t\leqs \tau_1\wedge\tau_2\wedge\tau^*$. Hence
\begin{equation}
 \label{preuve_ecart27}
U_{t\wedge\tau_1\wedge\tau_2\wedge\tau^*} \leqs U_{t_0} + C_1(t-t_0) +
\int_{t_0}^{t\wedge\tau_1\wedge\tau_2\wedge\tau^*} C_2(s)U_s\6s + 
 \tilde{\sigma}_1 \int_{t_0}^{t\wedge\tau_1\wedge\tau_2\wedge\tau^*} g(s)
\6W_s^1\;.
\end{equation}
We introduce a last stopping time  
\begin{equation}
 \label{preuve_ecart27B}
\tau_3 = \inf\biggsetsuch{t\geqs t_0}{\biggabs{\tilde{\sigma}_1
\int_0^{t\wedge\tau_1\wedge\tau_2\wedge\tau^*}
g(s) \6W_s^1 } \geqs h_3} \;.
\end{equation}
Then Lemma~\ref{lem_Bernstein} implies 
\begin{equation}
 \label{preuve_ecart28}
\bigprob{ \tau_3 < t  }  \leqs
\e^{-\kappa_3h_3^2/\tilde\sigma_1^2}
\end{equation}
for a $\kappa_3>0$. 
Applying Gronwall's lemma to~\eqref{preuve_ecart27} we get 
\begin{align}
\nonumber
U_{t\wedge\tau_1\wedge\tau_2\wedge\tau_3\wedge\tau^*} &\leqs \bigbrak{U_{t_0} +
C_1(t-t_0) + h_3}
\exp\biggset{\int_{t_0}^{t\wedge\tau_1\wedge\tau_2\wedge\tau_3\wedge\tau^*}
C_2(u)\6u}\\
&= \Order{T^2\tilde\mu^{2-3\gamma}}
+ \Order{\tilde\sigma^2 T^2\tilde\mu^{1-2\gamma}}
+ \Order{\tilde\eps T \tilde\mu^{-\gamma}}\;.
 \label{preuve_ecart29}
\end{align}
This shows in particular that $\tau^*>t$, provided we take $\gamma$ small
enough. Now~\eqref{prop_ecart04} follows from the decomposition 
\begin{align}
\nonumber
\biggprob{ \sup_{t_0 \leqs s \leqs t} \dfrac{|z_s - z_s^0|}{\sqrt{\zeta(s)}}
\geqs H } \leqs{}& \biggprob{ \sup_{t_0 \leqs s \leqs
t\wedge\tau_1\wedge\tau_2\wedge\tau_3} \dfrac{|z^1_s - z_s^0|}{\sqrt{\zeta(s)}}
\geqs H - \sup_{t_0 \leqs s \leqs
t\wedge\tau_1\wedge\tau_2\wedge\tau_3} \frac{U_s}{\sqrt{\zeta(s)}}} \\
&{}+ \bigprob{\tau_1>t} + \bigprob{\tau_2>t} + \bigprob{\tau_3>t} \;,
 \label{preuve_ecart30} 
\end{align} 
and~\eqref{prop_ecart06} is obtained in a similar way.
\end{proof}


We can now derive bounds for the contribution of the motion near the separatrix
to the Markov kernel.

\begin{prop}
\label{prop_xiz_kernel}
Fix some $\gamma\in(0,1/4)$ and an initial condition $(\xi_0,z_0)=(-L,z_0)\in
F_-$ with $\abs{z_0}=\Order{\tilde\mu^{1-\gamma}}$.
\begin{enum}
\item	Assume $c=0$. Then there exist constants
$C,\kappa_1, h_0>0$ such that the sample path starting in $(\xi_0,z_0)$ will hit
$F_+$ for the first time at a point $(\xi_1,z_1)$ such that 
\begin{equation}
 \label{xiz_kernel_02}
\Bigprob{z_1 \leqs z^0_T-\tilde\mu} \leqs \frac{C}{\tilde\mu^{2\gamma}} 
\exp\biggset{-\kappa_1\frac{\tilde\mu^2}{\tilde\sigma^2}}\;.
\end{equation}

\item	If $c\neq0$, but $\sqrt{\eps} \leqs \tilde\mu^{1+2\gamma+\theta}$ for
some $\theta>0$, then the first-hitting point of $F_+$ always satisfies 
\begin{equation}
 \label{xiz_kernel_01}
\Bigprob{\abs{z_1-z^0_T} \geqs \tilde\mu} 
\leqs \frac{C}{\tilde\mu^{2\gamma}} 
\exp\biggset{-\kappa_1\frac{\tilde\mu^2}{\tilde\sigma^2}}\;.
\end{equation}  
\end{enum}
\end{prop}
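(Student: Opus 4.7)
The plan is to translate the trajectory bounds of the preceding proposition into a statement about the first-hitting point $z_1$ of $F_+$. Denote by $\tau_+$ the first-hitting time of $F_+$. Since $z^0_T = \Order{\tilde\mu^{1-\gamma}}$ is far below the horizontal piece $\set{z = 1/2}$ of $F_+$, the hit must occur on the vertical part $\set{\xi = L}$, and substituting $\xi_s = -L + s/2 + u_s$ in $\xi_{\tau_+} = L$ yields the identity $\tau_+ = T - 2 u_{\tau_+}$, so $\tau_+$ is close to $T$ whenever $u$ stays close to $u^0$.

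The first step is to apply the two bounds of the preceding proposition with $H, H'$ of order $\tilde\mu$, producing a good event $\Omega_0$ on which $\abs{z_s - z^0_s} \leqs \tilde\mu\sqrt{\zeta(s)}$ and $\abs{u_s - u^0_s} \leqs \tilde\mu$ simultaneously for $s \in [t_0, T + \Delta]$. The complement has probability at most $C\tilde\mu^{-2\gamma}\e^{-\kappa\tilde\mu^2/\tilde\sigma^2}$, provided the correction $T^2\tilde\mu^{2-4\gamma} + T\tilde\eps\tilde\mu^{-2\gamma}$ appearing in the first exponential of the preceding proposition is dominated by $\tilde\mu$. This is automatic when $c = 0$, where $\tilde\eps = \sqrt\eps L^4 = \sqrt\eps\abs{\log\tilde\mu}^2$ is very small; when $c\ne0$ the extra contribution $\sqrt\eps\abs{c}L^2$ in $\tilde\eps$ is precisely what the stronger hypothesis $\sqrt\eps \leqs \tilde\mu^{1+2\gamma+\theta}$ is there to control. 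On $\Omega_0$ I would then use the decomposition
\begin{equation*}
z_1 - z^0_T = \bigpar{z_{\tau_+} - z^0_{\tau_+}} + \bigpar{z^0_{\tau_+} - z^0_T}
\end{equation*}
and estimate each piece: the first by $\tilde\mu\sqrt{\zeta(\tau_+)}$ from the previous step, and the second by $\abs{\tau_+ - T}\sup\abs{\dot z^0}$, with $\abs{\tau_+ - T} = \Order{\tilde\mu + T\tilde\mu^{1-\gamma}}$ obtained from the bound on $u_{\tau_+}$ together with the explicit expression for $u^0$. Choosing $\gamma \in (0, 1/4)$ small enough and tracking the constants, both pieces collapse to order $\tilde\mu$, which yields the two-sided estimate \eqref{xiz_kernel_01} in the case $c \ne 0$.

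Finally, the distinction between the two items of the proposition rests on the sign of the $\sqrt\eps$-correction in the $z$-equation of \eqref{Rxiz02}. When $c = 0$ this correction reduces to $\sqrt\eps(2/(9\alpha_*^2))\xi_t^4 \geqs 0$, a nonnegative drift; a standard comparison argument then shows that $z_t$ stochastically dominates from below the process obtained by deleting this term, so only the lower tail $z_1 \geqs z^0_T - \tilde\mu$ needs to be controlled via the construction above, which is \eqref{xiz_kernel_02}. The main obstacle is the delicate balancing of powers of $\tilde\mu$ in step two: $\sqrt{\zeta(\tau_+)}$ is of order $\tilde\mu^{-\gamma}$ and $\dot z^0$ can be as large as $T\tilde\mu^{1-\gamma}$, so reaching the target precision $\tilde\mu$ requires $\gamma$ small enough that $\tilde\mu^{1-2\gamma}$ beats the logarithmic factors arising from $T = \Order{\sqrt{\abs{\log\tilde\mu}}}$, while simultaneously keeping the correction term of step one well below $\tilde\mu$ so that the exponential probability bound does not deteriorate.
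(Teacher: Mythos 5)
Your skeleton is the one the paper uses: decompose $z_1-z^0_T=(z_\tau-z^0_\tau)+(z^0_\tau-z^0_T)$, control the first summand by the $z$-deviation estimate~\eqref{prop_ecart04}, the second through $\abs{\tau-T}=2\abs{u_\tau-u^0_\tau}+\Order{T\tilde\mu^{1-\gamma}}$ and~\eqref{prop_ecart06}, and for $c=0$ use the sign of the $\sqrt{\eps}$-term in the $z$-drift to reduce to a one-sided bound. The gap is in your treatment of the first summand. Applying the preceding proposition with $H$ of order $\tilde\mu$ only gives $\abs{z_s-z^0_s}\leqs\tilde\mu\sqrt{\zeta(s)}$ on the good event, and at the hitting time $\tau\approx T=2L$ one has $\sqrt{\zeta(\tau)}\asymp\e^{2L^2}=(c_-\tilde\mu)^{-\gamma}$, so this controls $z_\tau-z^0_\tau$ only at level $\tilde\mu^{1-\gamma}$, which exceeds the target $\tilde\mu$ by the diverging factor $\tilde\mu^{-\gamma}$; no choice of a fixed small $\gamma>0$ makes it \lq\lq collapse to order $\tilde\mu$\rq\rq. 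To bound $\prob{\abs{z_\tau-z^0_\tau}\geqs\tilde\mu-h\tilde\mu^{2-2\gamma}}$ one must keep that threshold itself (as the paper does, splitting the allowance $\tilde\mu$ between the two summands) and feed the correspondingly rescaled threshold into~\eqref{prop_ecart04}, checking that the correction $M(T^2\tilde\mu^{2-4\gamma}+T\tilde\eps\tilde\mu^{-2\gamma})$ in the exponent is negligible against it; this is precisely where $\gamma<1/4$ and $\sqrt{\eps}\leqs\tilde\mu^{1+2\gamma+\theta}$ are consumed, and your write-up never confronts this step, so the claimed precision $\tilde\mu$ with the claimed probability is not established.

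The second problem is in part (i). You assert that for $c=0$ the exponent correction $T\tilde\eps\tilde\mu^{-2\gamma}\ll\tilde\mu$ holds \lq\lq automatically\rq\rq\ because $\tilde\eps=\sqrt{\eps}\,L^4$ is very small. Part (i) assumes no lower bound on $\tilde\mu$ in terms of $\eps$ (that is exactly why $c=0$ is singled out), so this is false in general: for instance if $\delta\sim\eps$ then $\tilde\mu\sim\sqrt{\eps}$ and $T\tilde\eps\tilde\mu^{-2\gamma}$ is larger than $\tilde\mu$ by a factor $\tilde\mu^{-2\gamma}$ times logarithms. The paper uses the positivity of the $\sqrt{\eps}$-term differently: $z_t$ is bounded below by the corresponding quantity for the system with $\eps=0$, and the deviation analysis is applied to that system, in which $\tilde\eps$ is absent, so no condition on $\eps$ is needed at all. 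Your comparison argument only discards the upper tail; it does not remove the $\eps$-error terms from your step-1 estimate, so as written your proof of~\eqref{xiz_kernel_02} still requires a smallness condition on $\sqrt{\eps}$ that the statement does not grant. (Relatedly, the hypothesis $\sqrt{\eps}\leqs\tilde\mu^{1+2\gamma+\theta}$ in part (ii) is needed to tame the whole term $T\tilde\eps\tilde\mu^{-2\gamma}$, not merely the $\abs{c}L^2$ contribution.)
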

\begin{proof}
Consider first the case $\sqrt{\eps} \leqs \tilde\mu^{1+2\gamma+\theta}$. 
For any $h>0$ we can write 
\begin{equation}
 \label{xiz_kernel_04}
\bigprob{\abs{z_\tau-z^0_T} \geqs \tilde\mu}
\leqs  \bigprob{\abs{z_\tau-z^0_\tau} \geqs \tilde\mu - h\tilde\mu^{2-2\gamma}} 
+ \bigprob{\abs{z^0_\tau-z^0_T} \geqs h\tilde\mu^{2-2\gamma}}\;.
\end{equation} 
The first term on the right-hand side can be bounded, 
using~\eqref{prop_ecart04}, by a term of order
$\tilde\mu^{-2\gamma}\e^{-\kappa_1\tilde\mu^2/\tilde\sigma^2}$. The conditions
on $\gamma$ and $\sqrt{\eps}$ ensure that the error terms in the exponent
in~\eqref{prop_ecart04} are negligible.
 
To bound the second term on the right-hand side, we note
that~\eqref{preuve_ecart03} implies that $\abs{z^0_\tau-z^0_T}$ has order 
$\tilde\mu^{1-\gamma}\abs{\tau-T}$. Furthermore, the definitions of $\tau$ and
$u^0$ imply that $\abs{\tau-T}=2\abs{u_t-u^0_t}+\Order{\tilde\mu^{1-\gamma}}$. 
This shows that 
\begin{equation}
 \label{xiz_kernel_03}
\bigprob{\abs{z^0_\tau-z^0_T} \geqs h\tilde\mu^{2-2\gamma}}
\leqs \bigprob{\abs{u_t-u^0_t} \geqs hc_1\tilde\mu^{1-\gamma} -c_2
\mu^{1-\gamma}}
\end{equation} 
for some constants $c_1,c_2>0$. Taking $h=2c_2/c_1$ and
using~\eqref{prop_ecart06} yields a similar bound as for the first term. 

In the case $c=0$, we can conclude in the
same way by observing that $z_t$ is bounded below by its value for $\eps=0$,
the $\eps$-dependent term of $\6z_t$ in~\eqref{Rxiz02} being positive. 
Thus we need no condition on $\sqrt{\eps}$ for the error terms in the exponent
to be negligible.
\end{proof}


\section{Dynamics during an SAO}
\label{sec_SAO}


\subsection{Action--angle-type variables}
\label{ssec_Kphi}

In this section, we construct another set of coordinates allowing to describe
the dynamics during a small-amplitude oscillation.
Recall that in the limit  $\eps \to 0$ and $\tilde{\mu} \to 0$, the
deterministic
system~\eqref{weak004} admits a first integral
\begin{equation}
\label{Kphi-02}
\KK = 2z \e^{-2z - 2\xi^2+1}\;.
\end{equation}
The separatrix is given in this limit by $\KK=0$, while $\KK=1$
corresponds to the stationary point $P$. When
$\eps$ and $\tilde\mu$ are positive, we obtain 
\begin{equation}
 \label{Kphi-03}
\dot{\KK} = \biggbrak{2\tilde\mu(1-2z) + \sqrt{\eps}
\biggpar{\frac4{9\alpha_*^2}\xi^4 + c \bigbrak{4z\xi^2-6\xi^2+8z^2-4z+1}}} 
\e^{-2z - 2\xi^2+1}\;. 
\end{equation} 
Observe that if $c=0$, the term of order $\sqrt{\eps}$ is strictly positive. 

In order to analyse the dynamics in more detail, it is useful to introduce an
angle variable $\phi$. We define a coordinate transformation
from $(0,1]\times\fS^1$ to $\R\times\R_+$ by 
\begin{equation}
\label{Kphi11}
\begin{split}
\xi &= -\sqrt{\dfrac{-\log \KK}{2}} \sin \phi \\
z &= \dfrac{1}{2} \left( 1 + f \biggpar{ \sqrt{\dfrac{-\log \KK}{2}} \cos \phi
}
\right) \;.
\end{split}
\end{equation}
Here $f: \R \to (-1,+\infty)$ is defined as the solution of
\begin{equation}
\label{Kphi01}
\log(1+f(u))-f(u) = -2u^2
\end{equation}
such that
\begin{equation}
\label{Kphi02}
\sign f(u) = \sign u \;.
\end{equation}
The graph of $f$ is plotted in~\figref{fig_f}. 

\begin{figure}
\centerline{\includegraphics*[clip=true,width=100mm]
{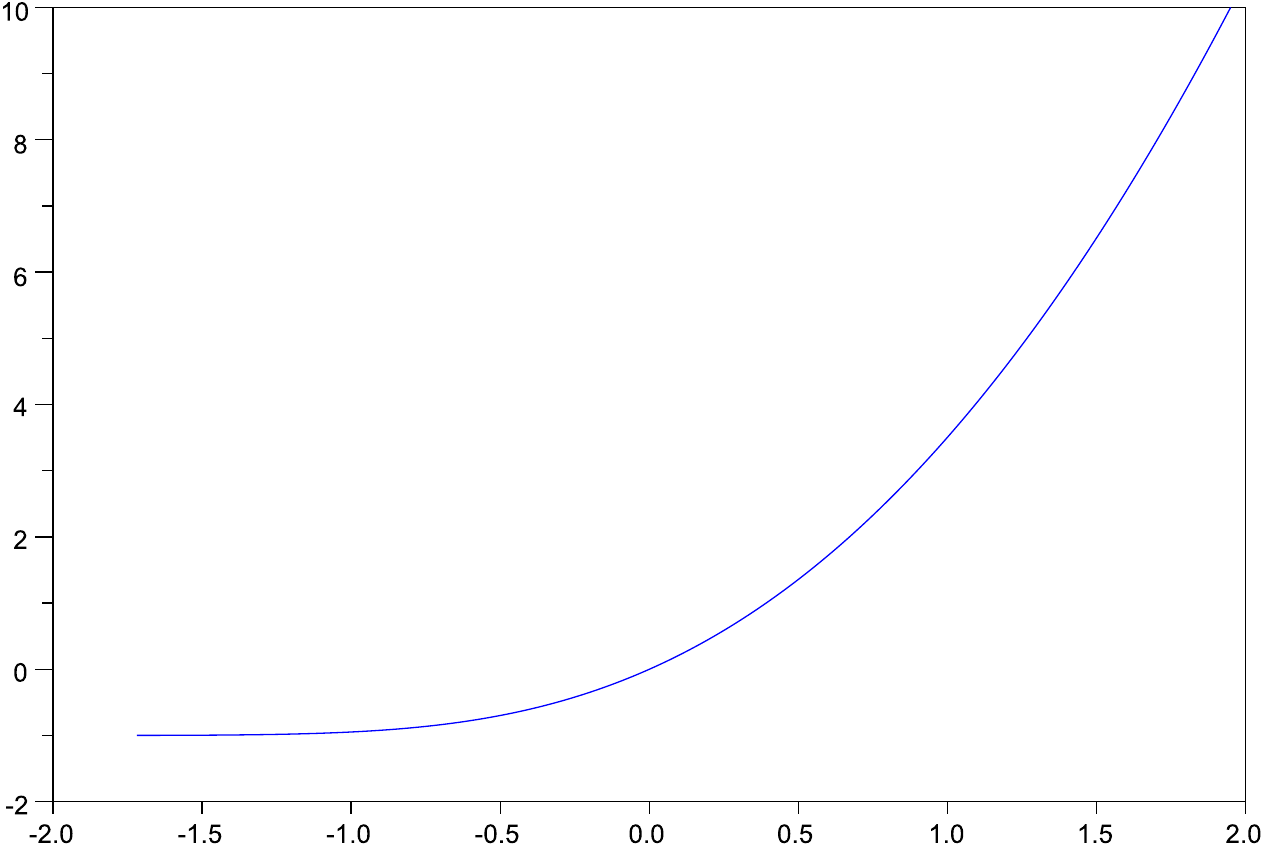}
}
 \figtext{ 
 	\writefig	12.7	0.6	$u$
 	\writefig	1.4	6.8	$f(u)$
 }
\caption[]{Graph of $u\mapsto f(u)$.}
\label{fig_f}
\end{figure}

\begin{lemma}
\label{lem_Kphi_f}
The function $f$ has the following properties:
\begin{itemiz}
\item 	Lower bounds: 
\begin{equation}
 \label{Kphi03}
f(u) > -1 
\qquad\text{and}\qquad
 f(u) \geqs 2u
\qquad \forall u\in\R\;.
\end{equation} 
\item	Upper bounds:
There exist constants $C_1,C_2>0$ and a function $r:\R_-\to\R$, with 
$0\leqs r(u)\leqs C_1 \e^{-1-2u^2}$, such that 
\begin{align}
 \label{Kphi04A} 
f(u) &\leqs C_2 u+2u^2 &&\forall u\geqs 0\;, \\
f(u) &= -1 + \e^{-1-2u^2}[1+r(u)] &&\forall u\leqs 0\;.
 \label{Kphi04B} 
\end{align} 
\item Derivatives: $f\in \cC^{\infty}$ and 
\begin{align}
\label{Kpi07}
f'(u) &= 4u \dfrac{1+f(u)}{f(u)}\;, \\
\label{Kpi08}
f''(u) &= 4 \dfrac{1+f(u)}{f(u)} \biggpar{ 1 - 4 \dfrac{u^2}{f(u)^2} }\;.
\end{align}
\item	There exists a constant $M>0$ such that 
\begin{equation}
\label{Kpi09}
0 < f''(u) \leqs M \quad \forall u \in \R\;.
\end{equation}
\end{itemiz}
\end{lemma}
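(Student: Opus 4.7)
The plan is to analyze the auxiliary function $g(v) = \log(1+v) - v$ on $(-1,\infty)$. First I would establish that $g$ is strictly increasing on $(-1,0]$ and strictly decreasing on $[0,\infty)$, with $g(0)=0$ and $g(v)\to-\infty$ at both endpoints, so that the restrictions of $g$ to each half are bijections onto $(-\infty,0]$. Combined with the sign condition~\eqref{Kphi02}, this gives existence, uniqueness, and the strict bound $f(u) > -1$ directly.

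For the inequality $f(u)\geqs 2u$, I would compare $g(2u)$ with $g(f(u))=-2u^2$ via the auxiliary function $\phi(u) = g(2u) + 2u^2 = \log(1+2u) - 2u + 2u^2$. A short computation gives $\phi'(u) = 8u^2/(1+2u)$, which is nonnegative on $(-1/2,\infty)$, so $\phi$ is monotone increasing there with $\phi(0)=0$. Transferring this information across the two monotone branches of $g$ yields $f(u)\geqs 2u$ on $(-1/2,\infty)$, and for $u\leqs -1/2$ the inequality is automatic because $f > -1 \geqs 2u$. For the upper bound $f(u)\leqs C_2 u + 2u^2$ on $u\geqs 0$, expanding the implicit equation as $f^2/2 - f^3/3 + \dots = 2u^2$ gives $f(u) = 2u + \tfrac{4}{3}u^2 + \Order{u^3}$ near zero and $f(u) = 2u^2 + \Order{\log u}$ as $u\to+\infty$, making the bound elementary for $C_2$ large enough. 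For $u\leqs 0$, the representation~\eqref{Kphi04B} comes from writing $\eps(u) = 1+f(u)\in(0,1]$; since $\log\eps = \eps - 1 - 2u^2$ rearranges to $\eps = \e^{-1-2u^2}\e^{\eps}$, one reads off $r(u) = \e^{\eps(u)} - 1$, and the elementary estimates $\e^{\eps} - 1 \leqs \eps\,\e^{\eps} \leqs e\cdot\eps$ together with $\eps\leqs\e^{-2u^2}$ yield $r(u)\leqs e^2\e^{-1-2u^2}$.

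The derivative formulas follow from implicit differentiation of $\log(1+f)-f=-2u^2$: one differentiation gives $f'/(1+f)-f'=-4u$, which rearranges to~\eqref{Kpi07}, and differentiating again and substituting~\eqref{Kpi07} back in produces~\eqref{Kpi08}. Smoothness on $\R\setminus\set{0}$ is the implicit function theorem applied to $F(u,f)=\log(1+f)-f+2u^2$, whose $f$-derivative $-f/(1+f)$ does not vanish when $f\neq 0$. The main obstacle is smoothness at $u=0$, where $\partial_f F(0,0)=0$; to handle it I would factor $2u^2 = (f^2/4)\bigbrak{1-(2/3)f+\dots}$ so that $u = (f/2)\sqrt{1-(2/3)f+\dots}$ with the sign fixed by~\eqref{Kphi02}. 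The right-hand side is an analytic function of $f$ with nonzero derivative at $f=0$, so the analytic inverse function theorem makes $f$ analytic, hence $\cC^\infty$, in a neighbourhood of the origin.

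Finally, I would verify $0<f''(u)\leqs M$ directly from~\eqref{Kpi08}. For $u>0$ both factors $(1+f)/f$ and $1-4u^2/f^2$ are nonnegative (the second by $f\geqs 2u>0$), with simultaneous vanishing impossible because the Taylor expansion $f(u) = 2u + \tfrac{4}{3}u^2 + \Order{u^3}$ gives $f''(0)=8/3>0$; for $u<0$ both factors are nonpositive (since $f\geqs 2u$ with both negative forces $\abs{f}\leqs\abs{2u}$), so the product stays strictly positive. For the upper bound, as $u\to+\infty$ one has $f\sim 2u^2$, so $(1+f)/f\to 1$ and $4u^2/f^2\to 0$, whence $f''\to 4$; as $u\to-\infty$, the representation~\eqref{Kphi04B} gives $(1+f)/f \sim -\e^{-1-2u^2}$ while $1-4u^2/f^2\sim -4u^2$, so $f''\sim 4u^2\e^{-1-2u^2}\to 0$. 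Combined with the continuity of $f''$ on $\R$ from the previous step, this yields the uniform bound $M$.
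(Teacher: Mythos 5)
Your proposal fleshes out exactly the route the paper leaves implicit (its entire proof is the sentence \lq\lq the results follow directly from the implicit function theorem and elementary calculus\rq\rq), and almost every step checks out: the two monotone branches of $g(v)=\log(1+v)-v$ give existence, uniqueness and $f>-1$; the comparison function $\phi(u)=\log(1+2u)-2u+2u^2$ with $\phi'(u)=8u^2/(1+2u)$ gives $f(u)\geqs 2u$; writing $\eps=1+f$ and $\eps=\e^{-1-2u^2}\e^{\eps}$ gives~\eqref{Kphi04B} with $r=\e^{\eps}-1\leqs e^2\e^{-1-2u^2}$; implicit differentiation gives~\eqref{Kpi07}--\eqref{Kpi08}; and the degeneracy of the implicit function theorem at the origin is correctly removed by inverting the analytic map $f\mapsto (f/2)\sqrt{1-\tfrac23 f+\dots}$ (there is a harmless typo there, $2u^2$ should be $u^2$, and the asymptotic constant for $f''$ at $-\infty$ is $16u^2\e^{-1-2u^2}$ rather than $4u^2\e^{-1-2u^2}$; neither affects the argument).

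The one point where your written justification does not give what is claimed is the \emph{strict} positivity $f''(u)>0$ for $u\neq0$. From the nonstrict bound $f(u)\geqs 2u$ you only obtain that the factor $1-4u^2/f(u)^2$ in~\eqref{Kpi08} is nonnegative for $u>0$ and nonpositive for $u<0$, hence only $f''\geqs0$; the appeal to $f''(0)=8/3$ excludes vanishing at the origin but not the possibility $f(u_0)=2u_0$ at some $u_0\neq0$, which would force $f''(u_0)=0$. The repair is already contained in your own computation: since $\phi(0)=0$ and $\phi'(u)=8u^2/(1+2u)>0$ for $u\in(-1/2,\infty)$, $u\neq 0$, one has $\phi(u)\neq0$ for $u\neq0$, i.e.\ $g(2u)\neq g(f(u))$, hence $f(u)>2u$ strictly for all $u\neq0$ (the case $u\leqs-1/2$ being trivial because $f>-1\geqs 2u$). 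This makes the second factor in~\eqref{Kpi08} strictly nonzero, so $f''>0$ on $\R\setminus\{0\}$, and $f''(0)=8/3$ by continuity; together with your limit analysis at $\pm\infty$ this yields $0<f''\leqs M$. With that one-line addition the proof is complete.
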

\begin{proof}
The results follow directly from the implicit function theorem and elementary
calculus. 
\end{proof}

We can now derive an expression for the SDE in coordinates $(\KK,\phi)$. To
ease notation, we introduce the function 
\begin{equation}
\label{Kphi-10}
X = X(\KK,\phi) = \sqrt{\frac{-\log \KK}{2}} \cos \phi\;,
\end{equation}
a parameter $\tilde{\sigma}>0$ defined by 
\begin{equation}
 \label{Kphi-10b}
 \tilde{\sigma}^2 = \tilde{\sigma}_1^2 + \tilde{\sigma}_2^2\;,
\end{equation} 
and the two-dimensional Brownian motion 
$\6W_t = (\6\tilde{W}_t^{(1)},\6\tilde{W}_t^{(2)})^T$.

\begin{prop}
\label{prop_Kphi_EDS}
For $z>0$, the system of SDEs~\eqref{Rxiz02} is equivalent to the system
\begin{equation}
\label{Kphi-21}
\begin{split}
\6\KK_t &=  \tilde{\mu} f_1(\KK_t,\phi_t) \, \6t + \tilde{\sigma}
\psi_1(\KK_t,\phi_t)
\6W_t \\
\6\phi_t &=   f_2(\KK_t,\phi_t) \, \6t  +  \tilde{\sigma} \psi_2(\KK_t,\phi_t)
\6W_t \;,
\end{split}
\end{equation}
where we introduced the following notations.
\begin{itemiz}
\item 	The new drift terms are of the form 
\begin{align}
\label{Kphi-11}
f_1(\KK,\phi) &= -2\KK\dfrac{f(X)}{1+f(X)} 
\biggbrak{1 + \frac{\sqrt{\eps}}{\tilde\mu} R_{\KK,\eps}(\KK,\phi)
+ \frac{\tilde\sigma^2}{\tilde\mu} R_{\KK,\sigma}(\KK,\phi)}\;, \\
\label{Kphi-12}
f_2(\KK,\phi) &= \dfrac{f(X)}{2 X}  
\biggbrak{1 + \frac{2\tilde\mu\tan\phi}{\log \KK(1+f(X))}
+ \sqrt{\eps} R_{\phi,\eps}(\KK,\phi)
+ \tilde\sigma^2 R_{\phi,\sigma}(\KK,\phi)}\;.
\end{align}

\item	The remainders in the drift terms are bounded as follows. 
Let 
\begin{equation}
 \label{Kphi-13}
\rho(\KK,\phi) = 
\begin{cases}
\sqrt{\abs{\log \KK}} & \text{if $\cos\phi\geqs0$\;,} \\
\KK^{-\cos^2\phi} & \text{if $\cos\phi<0$\;. }
\end{cases} 
\end{equation} 
Then there exists a constant $M_1>0$ such that for all $\KK\in(0,1)$ and all
$\phi\in\fS^1$, 
\begin{align}
\nonumber
\abs{ R_{\KK,\eps}(\KK,\phi)} &\leqs M_1 \abs{\log \KK}^2\;,
&
\abs{ R_{\KK,\sigma}(\KK,\phi)} &\leqs M_1 \rho(\KK,\phi) \;, \\
\abs{R_{\phi,\eps}(\KK,\phi)} &\leqs M_1 \abs{\log \KK}^{3/2} \rho(\KK,\phi)\;, 
&
\abs{ R_{\phi,\sigma}(\KK,\phi)} &\leqs M_1 \rho(\KK,\phi)^2/\abs{\log \KK}\;.
 \label{Kphi-11B}
\end{align} 
Furthermore, if $c=0$ then $-f(X)R_{\KK,\eps}(\KK,\phi) \geqs 0$. 

\item	The diffusion coefficients are given by 
\begin{align}
\nonumber
    \psi_1(\KK,\phi)&= \biggpar{2\sqrt{2} \frac{\tilde\sigma_1}{\tilde\sigma}
\KK 
\biggbrak{\sqrt{-\log \KK} - \frac{f(X)}{1+f(X)}} 
\sin \phi  ,
     -2\frac{\tilde\sigma_2}{\tilde\sigma} \KK \dfrac{f(X) }{1+f(X)} }\;,
 \\
    \psi_{2}(\KK,\phi)&= \biggpar{
-\frac{\tilde\sigma_1}{\tilde\sigma}\sqrt{\frac{2}{-\log \KK}}
\frac{1+f(X)\cos\phi}{[1+f(X)]\cos\phi}, \frac{\tilde\sigma_2}{\tilde\sigma}
\dfrac{1}{\log \KK}
\dfrac{f(X)}{1+f(X)} \tan \phi} \;.
\label{Kphi17}
\end{align}

\item	There exists a constant $M_2>0$ such that for all $\KK\in(0,1)$ and all
$\phi\in\fS^1$, 
\begin{equation}
 \label{Kphi17B}
\norm{\psi_1(\KK,\phi)}^2 \leqs M_2 \KK^2 \rho(\KK,\phi)^2 \;, 
\qquad
\norm{\psi_2(\KK,\phi)}^2 \leqs M_2 \frac{\rho(\KK,\phi)^2}{\abs{\log \KK}^2}\;.
\end{equation} 
\end{itemiz}
\end{prop}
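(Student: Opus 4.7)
The plan is to apply It\^o's formula to the change of variables $(\xi,z)\mapsto(\KK,\phi)$ defined by \eqref{Kphi-02} and \eqref{Kphi11} and then regroup terms to match the claimed structure. The preliminary step is to compute the partial derivatives of $\KK$ and $\phi$ with respect to $(\xi,z)$. Using $1+f(X)=2z$ (which follows from \eqref{Kphi01} combined with the identity $\xi^2+X^2=-\tfrac12\log\KK$), a direct calculation gives
\begin{equation*}
\partial_\xi\KK = -4\xi\KK,\qquad \partial_z\KK = -2\KK\,\frac{f(X)}{1+f(X)}.
\end{equation*}
For $\phi$, I would differentiate the implicit identity $\tan\phi = -\xi/X$, using $2X\,\partial_z X = f(X)/(1+f(X))$ which also comes from \eqref{Kphi01}, to obtain
\begin{equation*}
\partial_\xi\phi = -\frac{\cos^2\phi}{X},\qquad
\partial_z\phi = \frac{\tan\phi}{\log\KK}\,\frac{f(X)}{1+f(X)}.
\end{equation*}
The second derivatives are obtained similarly and are smooth on $\{z>0\}$.

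The drift of $\KK_t$ is then read off from \eqref{Rxiz02} together with the It\^o correction. The $O(1)$ deterministic pieces $\tfrac12\,\partial_\xi\KK + 2\xi z\,\partial_z\KK$ cancel identically (this is just the statement that $\KK$ is a first integral of the limiting system \eqref{weak0005}), leaving the $\tilde\mu$-contribution $\tilde\mu\,\partial_z\KK = -2\tilde\mu\KK\,f(X)/(1+f(X))$ as the leading drift. The $\sqrt\eps$-terms of \eqref{Rxiz02} reproduce (and can be read directly off) the right-hand side of \eqref{Kphi-03}, and dividing by the leading drift produces $R_{\KK,\eps}$; when $c=0$ the $\sqrt\eps$-drift on $\KK$ is the nonnegative quantity $(4\sqrt\eps/9\alpha_*^2)\,\xi^4\e^{-2z-2\xi^2+1}$, so its quotient by $-2\tilde\mu\KK\,f(X)/(1+f(X))$ has the opposite sign of $f(X)$, giving the claimed positivity $-f(X)\,R_{\KK,\eps}\geqs 0$. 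The It\^o second-order term $\tfrac12[\tilde\sigma_1^2(1+4\xi^2)\,\partial^2_{\xi\xi}\KK - 4\tilde\sigma_1^2\xi\,\partial^2_{\xi z}\KK + \tilde\sigma_2^2\,\partial^2_{zz}\KK]$ yields the $\tilde\sigma^2$-remainder $R_{\KK,\sigma}$. The same mechanism, applied to $\phi$, gives $f_2$: the leading drift is $\tfrac12\,\partial_\xi\phi + 2\xi z\,\partial_z\phi$, which simplifies (using $2z = 1+f(X)$ and $\xi=-R\sin\phi$) to $f(X)/(2X)$, with the $\tilde\mu$-correction producing the $2\tilde\mu\tan\phi/[\log\KK(1+f(X))]$ term inside the bracket.

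The diffusion coefficients in \eqref{Kphi-21} follow by multiplying the Jacobian by the diffusion matrix of \eqref{Rxiz02} with columns $(\tilde\sigma_1,-2\tilde\sigma_1\xi)^T$ and $(0,\tilde\sigma_2)^T$. Substituting $-4\xi\KK = 4\KK R\sin\phi$ with $R=\sqrt{-\log\KK/2}$ reorganises the $W^{(1)}$-component of $\6\KK$ into the combination $\sqrt{-\log\KK}-f(X)/(1+f(X))$ that appears in $\psi_1$, and the $W^{(2)}$-component is immediate. The $\phi$-diffusion $\psi_2$ drops out of $\partial_\xi\phi$ and $\partial_z\phi$ directly.

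The last task is the uniform bounds \eqref{Kphi-11B} and \eqref{Kphi17B}. On $\{\cos\phi\geqs 0\}$ we have $X\geqs 0$, so by \eqref{Kphi03}--\eqref{Kphi04A} both $f(X)$ and $1/(1+f(X))$ are controlled by polynomial expressions in $X$, hence by powers of $\sqrt{\abs{\log\KK}}$; this gives the first case of $\rho$. On $\{\cos\phi<0\}$, i.e.\ $X\leqs 0$, the expansion \eqref{Kphi04B} yields $1+f(X)\asymp\e^{-1-2X^2}$, so $1/(1+f(X))\asymp\e\cdot\e^{2X^2} = \e\cdot\KK^{-\cos^2\phi}$, which is the second case of $\rho$. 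The explicit forms of $R_{\KK,\eps},R_{\phi,\eps},R_{\KK,\sigma},R_{\phi,\sigma}$ obtained in the previous step are products of such factors with polynomials in $\log\KK$ and trigonometric functions, and a careful accounting yields the four stated bounds; $\psi_1,\psi_2$ are handled identically. The main obstacle I expect is purely organisational: making sure that the apparent singularities at $X=0$ (where $\cos\phi=0$) cancel, so that $f(X)/(2X)$ in $f_2$ remains smooth (it extends by $f(X)/X\to 2$ thanks to \eqref{Kphi03}), and that all apparent divergences in the bracketed corrections of $f_2$ cancel against the $f(X)$ prefactor. After that, the only genuine analytic content is the $\rho$-based estimate near $\cos\phi<0$, which is where the geometry of the separatrix (exponential squeezing of level curves) shows up explicitly.
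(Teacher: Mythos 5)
Your proposal is correct and follows essentially the same route as the paper: the published proof is exactly an It\^o-formula computation in the $(\KK,\phi)$ coordinates, with the two-regime remainder bounds coming from Lemma~\ref{lem_Kphi_f} (polynomial control of $f$ and $1/(1+f)$ when $X\geqs 0$, and $1+f(X)\asymp \e^{-1-2X^2}=\e^{-1}\KK^{\cos^2\phi}$ when $X<0$), and the $c=0$ positivity of $-f(X)R_{\KK,\eps}$ read off from the sign of the $\sqrt{\eps}$-term in~\eqref{Kphi-03} — all of which you reproduce, with correct Jacobian entries. The only (cosmetic) slip is that the deterministic drift which cancels identically is $(\tfrac12-z)\,\partial_\xi\KK+2\xi z\,\partial_z\KK$ rather than $\tfrac12\,\partial_\xi\KK+2\xi z\,\partial_z\KK$, as your own parenthetical reference to the first integral of~\eqref{weak0005} already indicates.
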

\begin{proof}
The result follows from It\^o's formula, by a straightforward though lengthy
computation. The difference between the bounds obtained for $\cos\phi\geqs0$ and
$\cos\phi<0$ is due to the fact that terms such as $f(X)\tan(\phi)/(1+f(X))$ can
be bounded by a constant times $\sqrt{-\log \KK}$ in the first case, and by a
constant times $\KK^{-\cos^2\phi}$ in the second one, as a consequence of
Lemma~\ref{lem_Kphi_f}. The fact that $-f(X)R_{\KK,\eps}$ is positive if
$c=0$ follows from the positivity of the term of order $\sqrt{\eps}$
in~\eqref{Kphi-03}.
\end{proof}


\subsection{Averaging}
\label{ssec_averaging}

In System~\eqref{Kphi-21}, the variable $\KK$ changes more slowly than the
variable $\phi$, which is a consequence of the fact that $\KK$ is a first
integral
when $\tilde\mu=\eps=\tilde\sigma=0$. This suggests to use an averaging approach
to analyse the dynamics. However, since the behaviour near $\phi=\pi$ has
already been considered in the previous section, using $(\xi,z)$-coordinates, we
only need to consider $\phi\in[\phi_0,\phi_1]$, where
$-\pi<\phi_0<0<\phi_1<\pi$. 

We look for a change of variables of the form 
\begin{equation} 
\label{Kphi-22}
\Kbar=\KK+\tilde{\mu}w(\KK,\phi)
\end{equation}
which eliminates the term of order $\tilde\mu$ in $\6\KK_t$. 
It\^o's formula yields
\begin{equation}
\label{Kphi-23}
\6\Kbar_t = \6\KK_t + \tilde{\mu} \dfrac{\partial w}{\partial \phi} \6\phi_t +
\tilde{\mu} \dfrac{\partial w}{\partial \KK} \, \6\KK_t + 
\dfrac{1}{2} \tilde{\mu} \left( \dfrac{\partial^2w}{\partial \KK^2} \6\KK_t^2 +
2\dfrac{\partial^2w}{\partial \KK \, \partial \phi} \6\KK_t \, d\phi_t +
\dfrac{\partial^2w}{\partial \phi^2} \6\phi_t^2 \right)\;.
\end{equation}
Replacing $d\KK_t$ et $d\phi_t$ by their expressions in \eqref{Kphi-21}, we get
\begin{equation}
\label{Kphi-24}
\6\Kbar_t =  \tilde{\mu} \biggpar{ f_1 + \dfrac{\partial w}{\partial \phi}
f_2 + \Order{\tilde{\mu}}+\Order{\tilde{\sigma}^2} }   \6t 
+  \tilde{\sigma} \left( \psi_1 + \tilde{\mu} \left( \dfrac{\partial w}{\partial
\phi} \psi_2 + \dfrac{\partial w}{\partial \KK} \psi_1 \right) \right)  \6W_t\;.
\end{equation}    
Thus choosing the function $w$ in such a way that
\begin{equation}
\label{Kphi-25}
f_1 + \dfrac{\partial w}{\partial \phi} f_2 = 0
\end{equation} 
will decrease the order of the drift term in~\eqref{Kphi-24}. 
We thus define the function $w$ by the integral
\begin{equation}
\label{Kphi-26}
w(\KK,\phi) = - \int_{\phi_0}^{\phi} \dfrac{f_1(\KK,\theta)}{f_2(\KK,\theta)} 
\6\theta\;, 
\end{equation}
which is well-defined (i.e., there are no resonances), since~\eqref{Kphi03}
shows that $f_2(\KK,\phi)$ is bounded below by a positive constant, for
sufficiently small $\tilde\mu$, $\eps$ and $\tilde\sigma$.

\begin{lemma}
\label{lem_Kphi_w} 
Let $\phi_0\in(-\pi,-\pi/2)$ and $\phi_1\in(\pi/2,\pi)$ be such that
$\cos^2(\phi_0), \cos^2(\phi_1) \leqs b$ for some $b\in(0,1)$. Then 
\begin{equation}
 \label{Kphi-w01}
w(\KK,\phi_1) = -\frac{\sqrt2\e}{\sqrt{-\log \KK}}
\biggbrak{\frac{\KK^{\sin^2\phi_0}}{-\sin\phi_0} +
\frac{\KK^{\sin^2\phi_1}}{\sin\phi_1} + r_1(\KK)}
\Bigpar{1 + r_2(\KK) + r_\eps(\KK)} \;,
\end{equation} 
where the remainder terms satisfy 
\begin{align}
\nonumber
r_1(\KK) &= \bigOrder{\KK \log(\abs{\log \KK})} \;, \\
\nonumber 
r_2(\KK) &= \biggOrder{\frac{1}{\abs{\log \KK}} + \tilde\mu \KK^{-b} +
\tilde\sigma^2
\biggpar{\frac{\KK^{-b}}{\tilde\mu} + \frac{\KK^{-2b}}{\abs{\log \KK}}}}\;, \\
r_\eps(\KK) &\leqs \sqrt{\eps}\,\biggOrder{\frac{\abs{\log
\KK}^2}{\tilde\mu} + \KK^{-b}\abs{\log \KK}^{3/2}}\;,
\label{Kphi-w02} 
\end{align}
and $r_\eps(\KK)\geqs0$ is $c=0$. 
Furthermore, the derivatives of $w$ satisfy the bounds 
\begin{equation}
 \label{Kphi-w02B}
\dpar{w}{\KK}(\KK,\phi) = \biggOrder{\frac{\KK^{-b}}{\sqrt{\abs{\log \KK}}}}\;, 
\qquad
\dpar{w}{\phi}(\KK,\phi) = \bigOrder{\KK^{1-b}\sqrt{\abs{\log \KK}}}\;,
\end{equation} 
and 
\begin{equation}
 \label{Kphi-w02C}
\dpar{^2w}{\KK^2} =  \biggOrder{\frac{\KK^{-1-b}}{\sqrt{\abs{\log \KK}}}}\;,
\quad
\dpar{^2w}{\KK\partial\phi} = \biggOrder{\frac{\KK^{-b}}{\sqrt{\abs{\log
K}}}}\;, 
\quad
\dpar{^2w}{\phi^2} = \bigOrder{\KK^{1-b}\sqrt{\abs{\log \KK}}}\;.
\end{equation} 
\end{lemma}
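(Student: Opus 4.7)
The plan is to substitute the expressions of Proposition~\ref{prop_Kphi_EDS} into the defining integral~\eqref{Kphi-26} for $w(\KK,\phi_1)$. After the common factor $f(X)$ cancels between $f_1$ and $f_2$, one obtains
\begin{equation*}
  -\frac{f_1(\KK,\theta)}{f_2(\KK,\theta)}
  = \frac{4\KK\,X(\KK,\theta)}{1+f(X(\KK,\theta))}
  \bigl[1+\rho_\mu(\KK,\theta) + \rho_\sigma(\KK,\theta) + \rho_\eps(\KK,\theta)\bigr]\;,
\end{equation*}
where the relative corrections $\rho_\mu,\rho_\sigma,\rho_\eps$ come from the bracketed remainder factors of $f_1$ and $f_2$ and are controlled by~\eqref{Kphi-11B}. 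I would then split $[\phi_0,\phi_1]$ according to the sign of $\cos\theta$, since Lemma~\ref{lem_Kphi_f} yields two qualitatively different regimes for $f(X)$: for $\cos\theta\leqs 0$, expansion~\eqref{Kphi04B} gives $1+f(X)=\e^{-1}\KK^{\cos^2\theta}[1+r(X)]$, so the leading integrand becomes $4\e X\,\KK^{\sin^2\theta}[1+r(X)]$ (negative since $X<0$ there); for $\cos\theta\geqs 0$ and large $X$ the asymptotics $f(X)\sim 2X^2$ from Lemma~\ref{lem_Kphi_f} give a leading integrand of absolute order $\KK/|X|$.

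The dominant contributions come from the two endpoints $\theta=\phi_0,\phi_1$, where $\sin^2\theta$ attains its minima on the $\cos\theta\leqs 0$ sector. Expanding
\begin{equation*}
\sin^2(\phi_1-u) = \sin^2\phi_1 + 2\sin\phi_1\,|\cos\phi_1|\,u + \Order{u^2}
\end{equation*}
and using $\cos(\phi_1-u)\approx-|\cos\phi_1|$, a Laplace-type evaluation (valid as $|\log\KK|\to\infty$) yields
\begin{equation*}
  \int_{\phi_1-\eta}^{\phi_1} 4\e X\KK^{\sin^2\theta}\,\6\theta
  \sim -4\e\sqrt{\tfrac{|\log\KK|}{2}}\,|\cos\phi_1|\,\KK^{\sin^2\phi_1}
  \int_0^\infty \e^{-2\sin\phi_1|\cos\phi_1|\,u\,|\log\KK|}\,\6u
  = -\frac{\sqrt{2}\,\e\,\KK^{\sin^2\phi_1}}{\sin\phi_1\sqrt{-\log\KK}}\;,
\end{equation*}
and a symmetric computation at $\phi_0$ (with $\sin\phi_0<0$, $\cos\phi_0<0$) produces the companion term. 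The contribution from the $\cos\theta\geqs 0$ sector is controlled by the change of variables $X=\sqrt{-\log\KK/2}\cos\theta$: splitting off a transition region of $\phi$-width $\Order{1/\sqrt{|\log\KK|}}$ around $X=0$ and using $-f_1/f_2\simeq -2\KK/X$ elsewhere, the remaining integral has order $\KK\log|\log\KK|/\sqrt{|\log\KK|}$, which after factoring out the prefactor $\sqrt{2}\e/\sqrt{-\log\KK}$ becomes the $r_1(\KK)=\Order{\KK\log|\log\KK|}$ term in~\eqref{Kphi-w01}.

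The multiplicative corrections $r_2$ and $r_\eps$ follow by inserting the bounds~\eqref{Kphi-11B} on the $R$-remainders into $\rho_\mu,\rho_\sigma,\rho_\eps$; since the Laplace integral concentrates near the endpoints, where $\rho(\KK,\phi_j)\sim\KK^{-\cos^2\phi_j}\leqs\KK^{-b}$, each correction contributes a multiplicative error of the stated order: the $\tilde\mu\tan\phi/[\log\KK(1+f(X))]$ piece of $f_2$ produces the $\tilde\mu\KK^{-b}$ contribution to $r_2$, the $R_{\KK,\sigma}/\tilde\mu$ piece of $f_1$ produces $\tilde\sigma^2\KK^{-b}/\tilde\mu$, and the two $\sqrt{\eps}$ remainders combine into $r_\eps$. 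The sign claim $r_\eps\geqs 0$ when $c=0$ is inherited from the positivity $-f(X)R_{\KK,\eps}\geqs 0$ asserted in Proposition~\ref{prop_Kphi_EDS}. The derivative bounds follow by differentiation under the integral: $\partial_\phi w(\KK,\phi)=-f_1(\KK,\phi)/f_2(\KK,\phi)$, whose pointwise order at $\phi=\phi_1$ is $\KK^{1-b}\sqrt{|\log\KK|}$ by the same endpoint analysis; $\partial_\KK w$ and the second derivatives are computed by differentiating the leading integrand $4\KK X\KK^{\sin^2\theta}$ under the integral (each $\partial_\KK$ producing an extra factor of $\KK^{-1}$ from $\KK^{\sin^2\theta-1}$) and re-applying Laplace's method, which accounts for the $\KK^{-b}$ and $\KK^{-1-b}$ factors in~\eqref{Kphi-w02B}--\eqref{Kphi-w02C}. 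The main obstacle is the bookkeeping of these multiplicative corrections, which become large precisely in the $\cos\theta<0$ sector where the Laplace integral concentrates; one must verify that they only perturb the leading asymptotics by the stated $r_2,r_\eps$ rather than overwhelming them, which constrains the admissible ranges of $\tilde\mu,\tilde\sigma^2,\sqrt{\eps}$ relative to $\KK^b$.
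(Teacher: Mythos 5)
Your proposal is correct and takes essentially the same route as the paper: you split the integral defining $w$ according to the sign of $\cos\theta$, extract the dominant endpoint contributions at $\phi_0,\phi_1$ (the paper gets them via the substitution $u=\sqrt{-2\log \KK}\,\sin\phi$ and error-function asymptotics rather than a local Laplace expansion, but the leading terms agree), bound the middle sector by $\Order{\KK\log\abs{\log \KK}/\sqrt{\abs{\log \KK}}}$, and read off $r_2$, $r_\eps$ and the derivative bounds from~\eqref{Kphi-11B} with $\rho=\KK^{-b}$ and from $\partial w/\partial\phi=-f_1/f_2$, exactly as in the paper. The level of rigour in your treatment of the multiplicative remainders and of the derivative bounds matches the paper's own (terse) argument, so no gap relative to it.
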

\begin{proof}
We split the integral into three parts. Using the change of variables $t = \sin
\phi$ and a partial fraction decomposition, we find that the leading part of the
integral on $[-\pi/2,\pi/2]$ satisfies 
\begin{equation}
 \label{Kphi-w03}
\int_{-\pi/2}^{\pi/2} \frac{4\KK X(\KK,\phi)}{1+f(X(\KK,\phi))}\6\phi = 
\biggOrder{\KK\frac{\log(\abs{\log \KK})}{\sqrt{\abs{\log \KK}}}}\;. 
\end{equation} 
Next we consider the integral on $[\phi_0,-\pi/2]$. The change of variables
$u=\sqrt{-2\log \KK}\sin\phi$, \eqref{Kphi04B} and asymptotic properties of
the error function imply 
\begin{align}
\nonumber
\int_{\phi_0}^{-\pi/2} \frac{4\KK X}{1+f(X)}\6\phi 
&= 2\KK \int_{\sin\phi_0\sqrt{-2\log \KK}}^{-\sqrt{-2\log \KK}}
\frac{\6u}{1+f\Bigpar{-\sqrt{\frac{-\log \KK}{2} - \frac{u^2}4}}}\\
\label{KKphi-w04}
&= 2\e \int_{\sin\phi_0\sqrt{-2\log \KK}}^{-\sqrt{-2\log \KK}}
\e^{-u^2/2} \bigbrak{1+\Order{\KK\e^{-u^2/2}}} \6u \\
\nonumber
&= -2\e \frac{\KK^{\sin^2\phi_0}}{\sqrt{-2\log \KK}(-\sin\phi_0)}
\biggbrak{1+\biggOrder{\frac{1}{\abs{\log \KK}}} +
\Order{\KK^{\cos^2\phi_0}}}\;.
\end{align}
The integral on $[\pi/2,\phi_1]$ can be computed in a similar way. This yields
the leading term in~\eqref{Kphi-w01}, and the form of the remainders follows
from~\eqref{Kphi-11B} with $\rho=\KK^{-b}$. 
The bound on $\tdpar w\phi$ follows directly from~\eqref{Kphi-25}, while the
bound on $\tdpar w\KK$ is obtained by computing the derivative of $f_1/f_2$. 
The bounds on second derivatives follow by similar computations.
\end{proof}

Notice that for the remainder $r_2(\KK)$ to be small, we need that
$\KK^b\gg\tilde\mu$ and $\KK^b\gg\tilde\sigma^2/\tilde\mu$. Then the term
$r_\eps(\KK)$ is of order $\sqrt{\eps}\abs{\log\tilde\mu}^2/\tilde\mu$, which is
small for $\tilde\mu/\abs{\log\tilde\mu}^2\gg\sqrt{\eps}$. If that is the case,
then $w(\KK,\phi_1)$ has order $\KK^{1-b}/\sqrt{\abs{\log \KK}}$. Otherwise,
$w(\KK,\phi_1)$ has order $\sqrt{\eps}\KK^{1-b}\abs{\log \KK}^{3/2}/\tilde\mu$. 
In the sequel, we will sometimes bound $1/\sqrt{\abs{\log \KK}}$ by $1$ to get
simpler expressions. 


\subsection{Computation of the kernel}
\label{Kphi_kernel} 

We can now proceed to the computation of the rotational part of the
kernel of the Markov chain. Recall the broken lines $F_\pm$ introduced
in~\eqref{bounds10} and~\eqref{bounds11}. For an initial condition $(L,z_0)\in
F_+$, we want to compute the coordinates of the point $(\xi_\tau,z_\tau)$ at the
first time 
\begin{equation}
\label{Kphi-29}
\tau = \inf \bigsetsuch{ t>0 }{(\xi_t,z_t) \in F_- }
\end{equation}
that the path starting in $(L,z_0)$ hits $F_-$. 

We will assume that there is a $\beta\in(0,1]$ such that 
\begin{equation}
\label{Kphi-30}
(c_- \tilde{\mu})^\beta \leqs z_0 \leqs z_{\max} < \dfrac{1}{2}\;.
\end{equation}
The $(\KK,\phi)$-coordinates of the initial condition are given by 
\begin{align}
\nonumber
\KK_0 &= 2z_0\e^{1-2z_0}\e^{-2L^2} \geqs
2(c_-\tilde\mu)^{\beta+\gamma}\;, \\
\sin^2\phi_0 &= \frac{2L^2}{-\log \KK_0} \geqs \frac{\gamma}{\beta+\gamma}\;,
\label{Kphi-30A}
\end{align}
with $\phi_0\in(-\pi,-\pi/2)$. Thus Lemma~\ref{lem_Kphi_w} applies with
$b=\beta/(\beta+\gamma)<1$. Notice that 
\begin{equation}
 \label{Kphi-30B}
\KK_0^{\cos^2\phi_0} \geqs 2^b(c_-\tilde\mu)^\beta\;. 
\end{equation} 

\begin{prop}
\label{prop_Kphi_kernel} 
Assume $z_0$ satisfies~\eqref{Kphi-30} for a $\beta<1$. Then there exists a
constant $\kappa>0$ such that the following holds for sufficiently small
$\tilde\mu$ and $\tilde\sigma$. 
\begin{enum}
\item	If $\sqrt{\eps}\leqs\tilde\mu/{\abs{\log\tilde\mu}^2}$, then 
with probability greater or equal than 
\begin{equation}
 \label{Kphi-31A}
1 - \e^{-\kappa\tilde\mu^2/\tilde\sigma^2}\;, 
\end{equation} 
$(\xi_t,z_t)$ hits $F_-$ for the first time at a point $(-L,z_1)$ such that
\begin{equation}
\label{Kphi-31}
z_1 = z_0 + \tilde\mu A(z_0) 
+ \frac{z_0}{1-2z_0} \Bigbrak{\tilde\sigma V(z_0) +
\bigOrder{\tilde\mu^{2(1-\beta)} + \tilde\sigma^2\tilde\mu^{-2\beta}} }\;.
\end{equation}
The function $A(z_0)$ is given by 
\begin{equation}
\label{Kphi-32}
A(z_0) = \frac{\e^{2z_0}}{L(1-2z_0)}
\biggbrak{1 + \bigOrder{z_0\log\abs{\log\tilde\mu}} +
\biggOrder{\frac{1}{\abs{\log z_0}}}}\;,
\end{equation}
and $V(z_0)$ is a random variable satisfying
\begin{equation}
\label{Kphi-35}
\bigprob{\tilde\sigma \abs{V(z_0)} \geqs h} \leqs 2
\exp\biggset{-\frac{\kappa h^2\tilde\mu^{2\beta}}{\tilde\sigma^2}}
\qquad \forall h>0\;.
\end{equation}

\item	If $c=0$ and $\sqrt{\eps}>\tilde\mu/{\abs{\log\tilde\mu}^2}$, then
$(\xi_t,z_t)$ hits $F_-$ for the first time either at a point $(-L,z_1)$ such
that $z_1$ is greater or equal than the right-hand side of~\eqref{Kphi-31}, or
at a point $(\xi_1,1/2)$ with $-L\leqs\xi_1\leqs 0$, again with a probability
bounded below by~\eqref{Kphi-31A}.
\end{enum}
\end{prop}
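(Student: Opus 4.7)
The plan is to apply the averaging substitution $\Kbar_t = \KK_t + \tilde\mu\,w(\KK_t,\phi_t)$ with $w$ as constructed in Lemma~\ref{lem_Kphi_w}. By the defining property~\eqref{Kphi-25}, this cancels the drift of order $\tilde\mu$ in~\eqref{Kphi-24}, and It\^o's formula together with the estimates~\eqref{Kphi-11B},~\eqref{Kphi17B} and the derivative bounds~\eqref{Kphi-w02B}--\eqref{Kphi-w02C} yield
\begin{equation*}
\6\Kbar_t = \tilde\mu\bigbrak{\Order{\tilde\mu}+\Order{\tilde\sigma^2}+\Order{\sqrt{\eps}}}\6t + \tilde\sigma\bigbrak{\psi_1(\KK_t,\phi_t)+\tilde\mu(\ldots)}\6W_t,
\end{equation*}
where the hidden constants depend on negative powers of $\KK_t$ through $\rho(\KK,\phi)$. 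Since $w(\KK_0,\phi_0)=0$ by the choice of lower limit in~\eqref{Kphi-26}, integration up to the first-hitting time $\tau$ of $F_-$ gives
\begin{equation*}
\KK_\tau-\KK_0 = -\tilde\mu\,w(\KK_\tau,\phi_\tau) + R_\tau + \tilde\sigma\,M_\tau,
\end{equation*}
where $R_\tau$ collects the higher-order drift contributions and $M_\tau$ is a stochastic integral. A preliminary step is $\tau=\Order{1}$ with overwhelming probability, which follows from the lower bound~\eqref{Kphi03} on $f_2$ together with Bernstein control of the fluctuations of $\phi_t$.

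The leading term is then identified using Lemma~\ref{lem_Kphi_w}. At $\xi_0=L$ the relation $\sin^2\phi_0 = 2L^2/(-\log\KK_0)$ yields the crucial identity $\KK_0^{\sin^2\phi_0}=\e^{-2L^2}$, and the analogous identity holds at $\phi_\tau$ because $\xi_\tau=-L$. Substituting these into~\eqref{Kphi-w01} together with $\KK_0 = 2z_0\e^{1-2z_0-2L^2}$ and inverting the map $\KK\leftrightarrow z$ at $\xi=-L$ (using $\6z/\6\KK = z/[\KK(1-2z)]$) produces the announced leading contribution $\tilde\mu\,A(z_0)$ with $A(z_0)$ as in~\eqref{Kphi-32}. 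The martingale $M_\tau$ is controlled by Lemma~\ref{lem_Bernstein} applied with~\eqref{Kphi17B}, on an auxiliary stopping time keeping $\KK_t$ comparable to $\KK_0$: the variance is bounded by a constant times $\tilde\sigma^2\KK_0^{2(1-b)}$ with $b=\beta/(\beta+\gamma)<1$, which after the change of variables yields the tail bound~\eqref{Kphi-35} (the factor $\tilde\mu^{2\beta}$ arising from the conversion weight $z_0/[(1-2z_0)\KK_0]$). The remainder of order $\tilde\mu^{2(1-\beta)}+\tilde\sigma^2\tilde\mu^{-2\beta}$ comes from $R_\tau$ and the sub-leading terms $r_1,r_2$ of Lemma~\ref{lem_Kphi_w}, and the global probability~\eqref{Kphi-31A} follows by union-bounding the auxiliary stopping times, provided the hypothesis $\sqrt{\eps}\leqs\tilde\mu/\abs{\log\tilde\mu}^2$ makes $r_\eps$ in~\eqref{Kphi-w02} negligible.

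For part~(ii), when this last hypothesis fails but $c=0$, the sign information $-f(X)R_{\KK,\eps}\geqs0$ from Proposition~\ref{prop_Kphi_EDS} makes the $\sqrt{\eps}$-contribution to the drift of $\Kbar$ definite in sign, which converts the estimate into a one-sided lower bound on $\KK_\tau-\KK_0$ and hence on $z_1$. The second alternative in the statement, exit through the horizontal segment $\{z=1/2,\ -L\leqs\xi\leqs 0\}$, is forced by the fact that $\KK_t$ may now grow to $\Order{1}$ before $\phi_t$ completes its rotation. The main obstacle is uniform control of the error terms along the whole trajectory: the weight $\rho(\KK,\phi)=\KK^{-\cos^2\phi}$ blows up near $\phi=\pm\pi/2$, forcing $b<1$ strictly and demanding a delicate hierarchy of stopping times ensuring that $\tilde\mu\KK_t^{-b}$, $\tilde\sigma^2\KK_t^{-2b}$ and $\sqrt{\eps}\abs{\log\KK_t}^{3/2}\KK_t^{-b}$ all remain small simultaneously, with the claimed probability. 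A secondary difficulty is that in part~(ii) the matching lower bound on the probability of \emph{not} exiting through $\{z=1/2\}$ would require a more refined analysis of the growth of $\KK_t$, which is why only the one-sided conclusion is stated.
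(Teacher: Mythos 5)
Your outline follows the same route as the paper: the averaging substitution $\Kbar_t=\KK_t+\tilde\mu\,w(\KK_t,\phi_t)$ with $w(\KK_0,\phi_0)=0$, Bernstein control of $\tau$ through the positive lower bound on $f_2$, Bernstein control of the martingale part under auxiliary stopping, identification of the leading term from Lemma~\ref{lem_Kphi_w} via $\KK_0^{-\cos^2\phi_0}=\e^{2z_0}/(2\e z_0)$, conversion to $z$ through $G'(\KK_0)=z_0/[\KK_0(1-2z_0)]$, and the sign argument for $c=0$ in part~(ii). So the strategy is right and the leading-order bookkeeping is consistent with \eqref{Kphi-31}--\eqref{Kphi-35}.

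However, what you label \lq\lq the main obstacle\rq\rq\ and defer is precisely the step where the actual proof has to do work, and leaving it as a promissory note about \lq\lq a delicate hierarchy of stopping times\rq\rq\ is a genuine gap, for two reasons. First, keeping $\KK_t$ comparable to $\KK_0$ is not by itself enough: the weight $\rho(\KK,\phi)=\KK^{-\cos^2\phi}$ blows up if $\phi_t$ fluctuates backwards past $\phi_0$ (or overshoots near the exit), so the protection must be on the combined quantity $\KK_t^{\cos^2\phi_t}$, as in the stopping time $\tau_1$ of \eqref{Kphi-35:1}; and one must then \emph{prove} that $\tau_1\geqs\tau$ on the good event, otherwise all your Bernstein bounds and the expansion of $w(\KK_\tau,\phi_\tau)$ are conditional on an event whose probability you have not estimated. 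The paper closes this loop by first passing to $\QQ_t=\log\Kbar_t$ (which avoids the circularity inherent in your additive bound with variance $\tilde\sigma^2\KK_0^{2(1-b)}$, since comparability of $\KK_t$ with $\KK_0$ is exactly what is being established), extracting the a priori relative bound \eqref{Kphi-41D} via the implicit function theorem, and then arguing by contradiction: if $\tau_1<\tau$ then $\KK_{\tau_1}$ would be $\order{\tilde\mu^{\beta+\gamma}}$ while \eqref{Kphi-41C} together with $\KK_0\geqs 2(c_-\tilde\mu)^{\beta+\gamma}$ forces it to be of order $\tilde\mu^{\beta+\gamma}$ at least. Second, in part~(i) you never rule out that the first hitting of $F_-$ occurs on the horizontal piece $\set{-L\leqs\xi\leqs0,\,z=1/2}$, which the statement excludes; the paper gets this from the same a priori bound, since such an exit would require $\KK_\tau\geqs(c_-\tilde\mu)^\gamma$ whereas $\KK_0\leqs a(c_-\tilde\mu)^\gamma$ with $a=2z_{\max}\e^{1-2z_{\max}}<1$, contradicting \eqref{Kphi-41D}. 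Supplying these two arguments (and only then applying your Steps on the leading term and the martingale tail) would turn your sketch into the paper's proof.
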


\begin{proof} 
We first consider the case
$\sqrt{\eps}\leqs\tilde\mu/{\abs{\log\tilde\mu}^2}$. 
\begin{itemiz}
\item{{\bf Step 1~:}}
To be able to bound various error terms, we need to assume that $\KK_t$ stays
bounded below. We thus introduce a second stopping time 
\begin{equation}
 \label{Kphi-35:1}
\tau_1 = \inf\bigsetsuch{t>0}{\cos\phi_t<0, \KK_t^{\cos^2\phi_t} <
(c_-\tilde\mu)^{\beta}}\;. 
\end{equation}
We start by showing that $\tau\wedge\tau_1$ is bounded with high probability.
Proposition~\ref{prop_Kphi_EDS} implies the existence of a
constant $C>0$ such that 
\begin{equation}
\label{Kphi-36}
\6\phi_t \geqs C \6t + \tilde{\sigma} \psi_2(\KK_t,\phi_t) \6W_t\;.
\end{equation}
Integrating this relation between $0$ and $t$, we get 
\begin{equation}
\label{Kphi-37}
\phi_{t} \geqs \phi_0 + Ct + \tilde{\sigma} \int_0^t \psi_2(\KK_s,\phi_s)
\6W_s\;. 
\end{equation}
Lemma~\ref{lem_Bernstein} and~\eqref{Kphi17B} provide the bound
\begin{equation}
\label{Kphi-38}
\biggprob{ \biggabs{ \tilde\sigma\int_0^{t\wedge\tau_1} \psi_2(\KK_s,\phi_s)
\6W_s
} \geqs h } 
\leqs \exp\biggset{-\frac{\kappa h^2 \tilde\mu^{2\beta}}{\tilde\sigma^2}}
\end{equation}
for some $\kappa>0$. 
Since by definition, $\phi_{\tau\wedge\tau_1}-\phi_0 < 2\pi$, we get 
\begin{equation}
\label{Kphi-39}
\biggprob{\tau\wedge\tau_1 > \frac{2\pi+h}{C}} \leqs \exp\biggset{-\frac{\kappa
h^2 \tilde\mu^{2\beta}}{\tilde\sigma^2}}\;.
\end{equation}
From now on, we work on the set $\Omega_1=\set{\tau\wedge\tau_1 \leqs
(2\pi+1)/C}$, which has probability greater or equal $1-\e^{-\kappa
\tilde\mu^{2\beta}/\tilde\sigma^2}$. 

\item{{\bf Step 2~:}}
The SDE~\eqref{Kphi-23} for $\Kbar_t$ can be written 
\begin{equation}
 \label{Kphi-39B}
\6\Kbar_t = \Kbar_t\bar f(\Kbar_t,\phi_t)\6t + \tilde\sigma\Kbar_t\bar\psi
(\Kbar_t,\phi_t)\6W_t\;,
\end{equation} 
where the bounds in Proposition~\ref{prop_Kphi_EDS} and
Lemma~\ref{lem_Kphi_w} yield 
\begin{align}
\nonumber
\bar f(\Kbar,\phi) &= \bigOrder{\tilde\mu^{2(1-\beta)} +
\tilde\sigma^2 \tilde\mu^{1-3\beta}}\;, \\
\norm{\bar\psi(\Kbar,\phi)}^2 &= \bigOrder{\tilde\mu^{-2\beta}}\;.
 \label{Kphi-39C}
\end{align}
By It\^o's formula, the variable $\QQ_t=\log\Kbar_t$ satisfies 
\begin{equation}
 \label{Kphi-39D} 
\6\QQ_t = \tilde f(\QQ_t,\phi_t)\6t + \tilde\sigma\tilde\psi
(\QQ_t,\phi_t)\6W_t\;,
\end{equation} 
where $\tilde f(\QQ,\phi) = \bar f(\e^\QQ,\phi) +
\Order{\tilde\sigma^2\tilde\mu^{-2\beta}}$ and
$\tilde\psi(\QQ,\phi)=\bar\psi(\e^\QQ,\phi)$. 
Setting 
\begin{equation}
 \label{Kphi-39E}
V_t = \int_0^t \tilde\psi(\QQ_s,\phi_s)\6W_s\;,
\end{equation} 
we obtain, integrating~\eqref{Kphi-39D} and using the fact that
$\tilde\mu^{1-3\beta}\leqs\tilde\mu^{-2\beta}$, 
\begin{equation}
\label{Kphi-40}
\QQ_t = \QQ_0 + \tilde{\sigma} V + \bigOrder{
\tilde\mu^{2(1-\beta)} +
\tilde\sigma^2 \tilde\mu^{-2\beta}}\;.
\end{equation}
Another application of Lemma~\ref{lem_Bernstein} yields 
\begin{equation}
\label{Kphi-41}
\bigprob{ \tilde\sigma \abs{V_{t\wedge\tau_1}} \geqs h_1 } 
\leqs 2\exp\biggset{-\frac{\kappa_1 h_1^2 \tilde\mu^{2\beta}}{\tilde\sigma^2}}
\end{equation}
for some $\kappa_1>0$. 
A convenient choice is $h_1=\tilde\mu^{1-\beta}$. 
From now on, we work on the set
$\Omega_1\cap\Omega_2$, where $\Omega_2=\set{\tilde\sigma
V_{t\wedge\tau_1}<\tilde\mu^{1-\beta}}$ satisfies 
$\fP(\Omega_2) \geqs 1-\e^{-\kappa_1\tilde\mu^2/\tilde\sigma^2}$.

\item{{\bf Step 3~:}}
Returning to the variable $\Kbar$, we get 
\begin{equation}
 \label{Kphi-41B}
\Kbar_t = \KK_0 \e^{\tilde\sigma V_t} \Bigbrak{1+\bigOrder{
\tilde\mu^{2(1-\beta)} + \tilde\sigma^2 \tilde\mu^{-2\beta}}}\;,
\end{equation} 
and thus 
\begin{equation}
 \label{Kphi-41C}
\KK_t =  \KK_0 \e^{\tilde\sigma V_t} \Bigbrak{1+\bigOrder{
\tilde\mu^{2(1-\beta)} + \tilde\sigma^2 \tilde\mu^{-2\beta}}} 
- \tilde\mu w(\KK_t,\phi_t)\;.
\end{equation} 
Using the implicit function theorem and the upper bound on $w$, we get the a
priori bound
\begin{equation}
 \label{Kphi-41D}
\frac{\abs{\KK_t-\KK_0}}{\KK_0}  = 
\bigOrder{\tilde\mu^{1-\beta}+\tilde\sigma^2\tilde\mu^{-2\beta}}\;. 
\end{equation} 

\item{{\bf Step 4~:}}
The a priori estimate~\eqref{Kphi-41D} implies that on $\Omega_1\cap\Omega_2$, 
the sample path cannot hit
$F_-$ on the part $\set{-L\leqs\xi\leqs0, z=1/2}$. 
Indeed, this would imply that $\KK_\tau\geqs(c_-\tilde\mu)^\gamma$, while
$\KK_0\leqs a(c_-\tilde\mu)^\gamma$ with $a=2z_{\max}\e^{1-2z_{\max}}<1$. 
As a consequence, we would have $(\KK_\tau-\KK_0)/\KK_0 > (1-a)/a$,
contradicting~\eqref{Kphi-41D}.

Let us now show that we also have $\tau_1 \geqs \tau$ on $\Omega_1\cap\Omega_2$.
Assume by contradiction that $\tau_1<\tau$. Then we have 
$\KK_{\tau_1}=(c_-\tilde\mu)^\beta$ and
$\cos^2\phi_{\tau_1}<\beta/(\beta+\gamma)$, so that
$\KK_{\tau_1}=\order{\tilde\mu^{\beta+\gamma}}$. 
Thus $\tilde\mu w(\KK_{\tau_1},\phi_{\tau_1}) =
\Order{\KK_{\tau_1}\tilde\mu^{1-\beta}} = \order{\tilde\mu^{1+\gamma}}$. 
Together with the lower bound~\eqref{Kphi-30A} on $\KK_0$, this implies that the
right-hand side of~\eqref{Kphi-41C} is larger than a
constant times $\tilde\mu^{\beta+\gamma}$ at time $t=\tau_1$. But this
contradicts the fact that $\KK_{\tau_1}=\order{\tilde\mu^{\beta+\gamma}}$. 

\item{{\bf Step 5~:}} 
The previous step implies that $\xi_\tau=-L$ on $\Omega_1\cap\Omega_2$. We can
thus write 
\begin{equation}
 \label{Kphi-42A}
\phi_\tau = g(\KK_\tau) 
\qquad\text{where}\qquad
\sin(g(\KK)) = \sqrt{\frac{2}{-\log \KK}} \, L\;, 
\end{equation} 
with $g(\KK)\in(\pi/2,\pi)$. Notice that $g(\KK_0)=-\phi_0$. 
Furthermore, we have 
\begin{equation}
 \label{Kphi-42B}
g'(\KK) = \frac{L}{\sqrt{2}\KK(-\log \KK)^{3/2} \cos(g(\KK))}\;, 
\end{equation} 
and thus $\KK_\tau g'(\KK_\tau) = \Order{1/\abs{\log\tilde\mu}}$.
Using this in the Taylor expansion 
\begin{equation}
 \label{Kphi-42C}
w(\KK_\tau,\phi_\tau) = w(\KK_0,-\phi_0) + (\KK_\tau-\KK_0) 
\biggbrak{\dpar w\KK(\KK_\theta,g(\KK_\theta)) + 
\dpar w\phi(\KK_\theta,g(\KK_\theta))g'(\KK_\theta)} \;,
\end{equation} 
which holds for some $\KK_\theta\in(\KK_0,\KK_\tau)$, yields the estimate 
\begin{equation}
 \label{Kphi-42D}
\frac{w(\KK_\tau,\phi_\tau)}{\KK_0}
= \frac{w(\KK_0,-\phi_0)}{\KK_0} + 
\bigOrder{\tilde\mu^{1-2\beta} + \tilde\sigma^2\tilde\mu^{-3\beta}}\;. 
\end{equation} 
Substitution in~\eqref{Kphi-41C} yields the more precise estimate 
\begin{equation}
 \label{Kphi-42E}
\KK_1 = \KK_0 \biggbrak{\e^{\tilde\sigma V_\tau} - \tilde\mu
\frac{w(\KK_0,-\phi_0)}{\KK_0} 
+ \bigOrder{\tilde\mu^{2(1-\beta)} + \tilde\sigma^2\tilde\mu^{1-3\beta}}}\;. 
\end{equation} 

\item{{\bf Step 6~:}} 
Finally, we return to the variable $z_1=z_\tau$. Eliminating $\phi$ from the
equations~\eqref{Kphi11}, it can be expressed in terms of $\KK_\tau$ as
\begin{equation}
\label{Kphi-45}
z_1 = G(\KK_\tau) 
\defby \dfrac{1}{2} \Biggbrak{ 1 + f \biggpar{ -\sqrt{\frac{- \log
\KK_\tau}{2}-L^2}\, }}\;.
\end{equation}
Note that $G(\KK_0)=z_0$, while 
\begin{equation}
 \label{Kphi-45B}
G'(\KK_0) = -\frac{1}{2\KK_0}
\frac{1+f\Bigpar{- \sqrt{\frac{-\log \KK_0}{2} - L^2}\,}}
{f\Bigpar{-\sqrt{\frac{-\log \KK_0}{2} - L^2}\,}}  
= \frac{z_0}{\KK_0(1-2z_0)}\;,
\end{equation} 
and
\begin{equation}
 \label{Kphi-45C}
G''(\KK) = \frac{1}{2\KK^2}  \frac{1+f\Bigpar{- \sqrt{\frac{-\log \KK}{2} -
L^2}\,}}{f\Bigpar{-\sqrt{\frac{-\log \KK}{2} - L^2}\,}} 
\left[1-\frac{1}{f\Bigpar{-\sqrt{\frac{-\log \KK}{2} - L^2}\,}^2}\right]\;,
\end{equation} 
which has order $z_0^2/\KK_0^2$. 
The Taylor expansion 
\begin{equation}
\label{Kphi-46}
z_1 = G(\KK_0) + (\KK_1 - \KK_0) G'(\KK_0) + \dfrac{(\KK_1 - \KK_0) ^2}{2}
G''(\KK_{\theta})
\end{equation}
thus becomes 
\begin{equation}
 \label{Kphi-47} 
z_1 = z_0 + \frac{\KK_1-\KK_0}{\KK_0} \frac{z_0}{1-2z_0} 
+ \biggOrder{\biggbrak{\frac{\KK_1-\KK_0}{\KK_0}z_0}^2}\;.
\end{equation} 
By~\eqref{Kphi-42E}, we have 
\begin{equation}
 \label{Kphi-48}
 \frac{\KK_1-\KK_0}{\KK_0} = \tilde\sigma V_\tau 
- \tilde\mu \frac{w(\KK_0,-\phi_0)}{\KK_0} 
+ \bigOrder{\tilde\mu^{2(1-\beta)} + \tilde\sigma^2\tilde\mu^{-2\beta}}\;,
\end{equation} 
and Lemma~\ref{lem_Kphi_w} yields 
\begin{equation}
 \label{Kphi-49}
- \mu \frac{w(\KK_0,-\phi_0)}{\KK_0} = 
\frac{2\e}{L} \biggbrak{\KK_0^{-\cos^2\phi_0} + \Order{\log\abs{\log \KK_0}}}
\biggbrak{1+\biggOrder{\frac{1}{\abs{\log \KK_0}}}}\;.
\end{equation} 
Now~\eqref{Kphi-30A} implies $\KK_0^{-\cos^2\phi_0}=\e^{2z_0}(2\e z_0)^{-1}$ and
$c_1\abs{\log z_0} \leqs \abs{\log \KK_0} \leqs c_2 \abs{\log\tilde\mu}$.
This completes the proof of the case 
$\sqrt{\eps}\leqs\tilde\mu/{\abs{\log\tilde\mu}^2}$.
\end{itemiz}
In the case $\sqrt{\eps}>\tilde\mu/{\abs{\log\tilde\mu}^2}$, we just use the
fact that $\KK_t$ is bounded below by its value in the previous case, as a
consequence of \eqref{Kphi-w02B}.
\end{proof}

\begin{cor}
\label{cor_Kphi} 
Assume that either $c=0$ or $\sqrt{\eps}\leqs\tilde\mu/\abs{\log\tilde\mu}^2$. 
There exists a $\kappa_2>0$ such that for an initial condition $(L,z_0)\in F_-$
with $z_0\geqs(c_-\tilde\mu)^{1-\gamma}$, the first hitting of $F_+$ occurs at
a height $z_1\geqs z_0$ with probability larger than
$1-\e^{-\kappa_2\tilde\mu^2/\tilde\sigma^2}$.  
\end{cor}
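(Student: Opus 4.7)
The plan is to apply Proposition~\ref{prop_Kphi_kernel} to the initial condition $(L,z_0)\in F_+$ with a $z_0$-dependent choice of the parameter $\beta$, taking $(c_-\tilde\mu)^\beta$ of the same order as $z_0$. Since $z_0\in[(c_-\tilde\mu)^{1-\gamma},z_{\max}]$ with $z_{\max}<1/2$, this gives $\beta\in[\Order{1/\abs{\log\tilde\mu}},\,1-\gamma]$, which satisfies the hypothesis~\eqref{Kphi-30} of the proposition. The objective is then to show that the deterministic drift term $\tilde\mu A(z_0)$ in~\eqref{Kphi-31} dominates the sum of the noise contribution and the remainder, with probability at least $1-\e^{-\kappa_2\tilde\mu^2/\tilde\sigma^2}$, which immediately implies $z_1\geqs z_0$.

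I would first observe that the prefactor $A(z_0)$ in~\eqref{Kphi-32} is bounded below by a constant times $1/L$ uniformly in $z_0$, since $z_0<z_{\max}<1/2$ keeps $\e^{2z_0}/(1-2z_0)$ bounded away from zero and the two $\order{1}$ correction terms are at most $1/2$ for $\tilde\mu$ small. With the adaptive identification $\tilde\mu^\beta\sim z_0$, the remainder $\frac{z_0}{1-2z_0}\bigOrder{\tilde\mu^{2(1-\beta)}+\tilde\sigma^2\tilde\mu^{-2\beta}}$ in~\eqref{Kphi-31} reduces to $\bigOrder{\tilde\mu^2/z_0+\tilde\sigma^2/z_0}$, which is of smaller order than $\tilde\mu/L$ provided $z_0\geqs(c_-\tilde\mu)^{1-\gamma}$ and $\tilde\sigma^2\ll\tilde\mu^2/\abs{\log\tilde\mu}$; the latter follows from the weak-noise hypothesis of Theorem~\ref{thm_weak} after translating back via~\eqref{Rxiz19}. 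For the noise term $\frac{z_0}{1-2z_0}\tilde\sigma V(z_0)$, I would apply~\eqref{Kphi-35} with threshold $h$ of order $\tilde\mu/(Lz_0)$; the exponent $\kappa h^2\tilde\mu^{2\beta}/\tilde\sigma^2$ then becomes of order $\tilde\mu^2/(L^2\tilde\sigma^2)$, and the logarithmic factor $L^2=\Order{\abs{\log\tilde\mu}}$ is absorbed into a slightly smaller constant $\kappa_2$.

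Combining this estimate with the exceptional event of probability $\e^{-\kappa\tilde\mu^2/\tilde\sigma^2}$ already present in Proposition~\ref{prop_Kphi_kernel} yields the advertised bound. In case~(b) of that proposition, the sample path may alternatively hit $F_-$ on its horizontal piece $\set{z=1/2,\,-L\leqs\xi\leqs 0}$, but this trivially gives $z_1=1/2\geqs z_0$ and requires no further argument. The main technical obstacle I foresee is the careful bookkeeping of the logarithmic $L$-factors together with the verification that all estimates remain uniform in $z_0\in[(c_-\tilde\mu)^{1-\gamma},z_{\max}]$ under the adaptive choice of $\beta$; once this is handled, everything else reduces to elementary inequalities.
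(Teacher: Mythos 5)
You take the same route as the paper: the corollary is read off from Proposition~\ref{prop_Kphi_kernel} by checking that the drift term $\tilde\mu A(z_0)$ in \eqref{Kphi-31} dominates the noise and remainder contributions. The paper's proof consists of the single choice $\beta=1-\gamma$ (note that \eqref{Kphi-30} is only a lower bound on $z_0$, so this fixed value already covers the whole range $z_0\in[(c_-\tilde\mu)^{1-\gamma},z_{\max}]$ and no adaptive $\beta$ is needed for applicability) together with a threshold $h$ of order $\tilde\mu^{\gamma}$, which makes the exponent $\kappa h^2\tilde\mu^{2\beta}/\tilde\sigma^2$ in \eqref{Kphi-35} exactly of order $\tilde\mu^2/\tilde\sigma^2$. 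Your adaptive choice $\tilde\mu^{\beta}\sim z_0$, $h\sim\tilde\mu/(Lz_0)$ is a legitimate variant, and your handling of the remainder term and of case~(b) (hitting the horizontal segment $z=1/2$, where trivially $z_1=1/2\geqs z_0$) is fine. The genuine gap is your final step: with your parameters the exponent is $\kappa h^2\tilde\mu^{2\beta}/\tilde\sigma^2\sim\kappa\,\tilde\mu^2/(L^2\tilde\sigma^2)$, and since $L^2=\tfrac{\gamma}{2}\abs{\log(c_-\tilde\mu)}\to\infty$ as $\tilde\mu\to0$, the factor $1/L^2$ cannot be ``absorbed into a slightly smaller constant $\kappa_2$'': for any fixed $\kappa_2>0$ one has $\kappa/L^2<\kappa_2$ eventually, so what your argument yields is only $1-\e^{-\kappa\tilde\mu^2/(\abs{\log\tilde\mu}\,\tilde\sigma^2)}$, strictly weaker than the stated bound. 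The weak-noise hypothesis $\tilde\sigma^2\lesssim\tilde\mu^2/\abs{\log\tilde\mu}$ does not rescue this (it only makes the weaker exponent bounded below by a constant), and the loss would propagate into the exponent of Theorem~\ref{thm_weak}, so it is not cosmetic.

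To remove the logarithm you must either arrange $h^2\tilde\mu^{2\beta}\geqs\const\tilde\mu^2$, as the paper's choice does, and then carry out the drift-domination check with that larger $h$, or keep your threshold and sharpen the variance estimate behind \eqref{Kphi-35}: the bound \eqref{Kphi-39C} replaces $\norm{\bar\psi}^2$ by its worst value $\Order{\tilde\mu^{-2\beta}}$, which is attained only on the short arc of $\phi$ near $\pm\pi$ where $\rho(\KK,\phi)=\KK^{-\cos^2\phi}$ saturates, so integrating along the actual passage recovers logarithmic factors in the Bernstein exponent. Two smaller points also deserve a word in a complete write-up: your uniform lower bound $A(z_0)\geqs\const/L$ is not immediate from \eqref{Kphi-32}, because the correction $\Order{z_0\log\abs{\log\tilde\mu}}$ is not uniformly $o(1)$ for $z_0$ up to $z_{\max}$; and since your $\beta$ depends on $z_0$, you should justify that the constants (and the smallness thresholds on $\tilde\mu,\tilde\sigma$) in Proposition~\ref{prop_Kphi_kernel} are uniform over $\beta\in(0,1-\gamma]$.
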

\begin{proof}
It suffices to apply the previous result with $\beta=1-\gamma$ and
$h$ of order $\tilde\mu^{\gamma}$. 
\end{proof}

\small

\bibliography{../BL}
\bibliographystyle{amsalpha}

\bigskip


\tableofcontents

\vfill
\noindent
{\small
Nils Berglund and Damien Landon \\ 
Universit\'e d'Orl\'eans, Laboratoire {\sc Mapmo} \\
{\sc CNRS, UMR 7349} \\
F\'ed\'eration Denis Poisson, FR 2964 \\
B\^atiment de Math\'ematiques, B.P. 6759\\
45067~Orl\'eans Cedex 2, France \\
{\it E-mail address: }{\tt nils.berglund@univ-orleans.fr, 
damien.landon@univ-orleans.fr}

}


\end{document}